\theoremstyle{plain}
\newtheorem{thm}{Theorem}[section]
\newtheorem{cor}[thm]{Corollary}
\newtheorem{lem}[thm]{Lemma}
\newtheorem{prop}[thm]{Proposition}
\newtheorem{conj}[thm]{Conjecture}
\theoremstyle{definition}
\newtheorem{defn}[thm]{Definition}
\newtheorem{remark}[thm]{Remark}
\newtheorem{ex}[thm]{Example}
\newcommand\calA{\mathcal{A}}            
\newcommand\calB{\mathcal{B}}            \newcommand\calO{\mathcal{O}}
\newcommand\calE{\mathcal{E}}
\newcommand\calM{\mathcal{M}}            \newcommand\calZ{\mathcal{Z}}
\newcommand\cat[1]{\ensuremath{\mathbf{#1}}}
\newcommand\op{\mathrm{op}}
\newcommand\id{\mathrm{id}}
\newcommand\sq{\mathrel{\square}}
\newcommand\pt{\mathrm{point}}
\newcommand\modu{/\mathord{\simeq}}
\newcommand\rel{\mathrel{\mathrm{rel}}}
\DeclareMathOperator{\ori}{or}
\DeclareMathOperator{\ob}{ob}
\newcommand{\BIGOP}[1]{\mathop{\mathchoice%
{\raise-0.22em\hbox{\huge $#1$}}%
{\raise-0.05em\hbox{\Large $#1$}}{\hbox{\large $#1$}}{#1}}}
\newcommand{\bigtimes}{\BIGOP{\times}}
\newcommand{\BIGboxplus}{\mathop{\mathchoice%
{\raise-0.35em\hbox{\huge $\boxplus$}}%
{\raise-0.15em\hbox{\Large $\boxplus$}}{\hbox{\large $\boxplus$}}{\boxplus}}}
\begin{document}

\title[Higher order track categories]{Higher order track categories and the algebra of higher order cohomology operations}

\author{Hans-Joachim Baues}

\begin{abstract}
	We describe a conjecture on the algebra of higher cohomology operations which leads to the computations of the differentials
	in the Adams spectral sequence. For this we introduce the notion of an n-th order track category which is suitable to study
	higher order Toda brackets and the differentials in spectral sequences. We describe various examples of higher order track 
	categories which are topological, in particular the track category of higher cohomology operations.
	Also differential algebras give rise to higher order track categories.
\end{abstract}

\subjclass[2000]{18G10, 55T15, 55S20, (55U99,18O05,18G25,18G40)}
\keywords{Higher cohomology operations, higher homotopies, higher track categories, higher Toda brackets, higher Massey
products, Adams spectral sequence, higher chain complexes}

\maketitle

\section{Deformation categories}\label{DefCat}
We first recall properties of cylinders of spaces. Let $I=[0,1]$ be the unit interval. For a subspace $A\subset B$ we define the 
\emph{relative cylinder} $I_A B$ by the pushout diagram in \cat{Top}
\begin{equation*}
	\xymatrix{
	I\times B\ar[r]^{\overline{p}}&	I_A B\\
	I\times A\ar[u]^{I\times i}\ar[r]_p & A\ar[u]_{\overline{i}}
	}
\end{equation*}
where $i:A\subset B$ is the inclusion and $p$ is the projection of the product space $I\times A$. 
We have \emph{inclusions}
\begin{equation*}
	i^+,i^-:B\to I_AB
\end{equation*}
by $i^+(x)=\overline{p}(1,x),\ i^-(x)=\overline{p}(0,x),\ x\in B$. We call the union
\begin{equation*}
	\partial(I_AB)=B^-\cup B^+\text{ with } B^-=i^-B,\ B^+=i^+B
\end{equation*}
the \emph{boundary} of $I_A B$ with $B^-\cap B^+=\overline{i}A$. 
We also have a \emph{projection} $q:I_AB\to B$ with $q\overline{p}=p$ and 
\begin{equation*}
	qi^+=1 \text{ and } qi^-=1
\end{equation*}
A map $H:I_AB\to C$ in \cat{Top} is termed a \emph{homotopy $H:f\simeq g \rel A$} where $f=Hi^-$ and $g=Hi^+$.
The map 
\begin{equation*}
	\varepsilon(f)=fq:I_AB\to B\to C
\end{equation*}
is the \emph{constant homotopy} $\varepsilon(f):f\simeq f\rel A$.
Moreover the map $I\to I$ which carries $t$ to $1-t$ induces for $H:f\simeq g\rel A$ the \emph{opposite homotopy}
\begin{equation*}
	H^{\op}:g\simeq f\rel A
\end{equation*}
By \emph{pasting homotopies} $H:f\simeq g\rel A$ and $G:g\simeq h\rel A$ one gets
\begin{equation*}
	G\mathrel{\square} H:f\simeq h\rel A
\end{equation*}

Let \cat{Top} be a convenient category of topological spaces and let $\cat{Top}^*$ be the category of pointed spaces $(X,*)$.
For pointed spaces $X, Y$ we have the smash product $X\wedge Y=X\times Y/X\vee Y$ where
$X\vee Y=X\times* \cup *\times Y$. If $A$ is a non-pointed space then $A^+=A\overset{.}{\cup}*$ is the disjoint union
with a base point and we have
\begin{equation*}
	A^+\wedge X = A\times X / A\times *
\end{equation*}
We call a map $f:A^+\wedge X\to Y$ in $\cat{Top}^*$ an \emph{$A$-map $X\to Y$}. We can identify the $A$-map $f$ with a 
map $f:A\to (Y,*)^{(X,*)}$ where $(Y,*)^{(X,*)}$ is the function space of pointed maps $X\to Y$.

\begin{defn}\label{CatDef}
	Let $\cat{C}(A)$ be the category of $A$-maps. Objects are pointed spaces $X, Y$ and morphisms are $A$-maps. Composition $gf$ of $A$-maps
	is determined by the composition
	\begin{equation*}
		A\times X\xrightarrow{\Delta\times X} A\times A\times X\xrightarrow{A\times f} A\times Y \xrightarrow{g} Z
	\end{equation*}
	where $\Delta$ is the diagonal of $A$. The identity of $X$ in $\cat{C}(A)$ is given by the projection $A\times X\to X$.
\end{defn}

A map $i:A\to B$ in \cat{Top} induces a functor
\begin{equation*}
	i^*:\cat{C}(B)\to \cat{C}(A)
\end{equation*}
which is the identity on objects and carries a $B$-map $f$ to the $A$-map $i^*f$ given by the composite
$f(i\times X): A\times X\to B\times X\to Y$.
The category $\cat{C}(A)$ has a \emph{zero object $*$} (that is, an initial and final object) given by the basepoint $*$.

We now consider the category of $I_AB$-maps for a relative cylinder $I_AB$.

\begin{prop}
	Let $\cat{C}_1=\cat{C}(I_A B)$ and $\cat{C}_0=\cat{C}(B)$. Then $\cat{C}_0, \cat{C}_1$ are categories with a zero
	object together with functors
	\begin{gather*}
		\cat{C}_0\xrightarrow{\varepsilon}\cat{C}_1\overset{\partial^+}{\underset{\partial^-}{\rightrightarrows}}\cat{C}_0,\\
		\op:\cat{C}_1\to \cat{C}_1,\\
		\square:\cat{C}_1\times_{\partial}\cat{C}_1\to \cat{C}_1
	\end{gather*}
	which are the identity on objects. Here $\cat{C}_1\times_{\partial}\cat{C}_1$ is the category of pairs $(G,H)$ of maps
	$X\to Y$ in $\cat{C}_1$ satisfying $\partial^+H=\partial^-G$. The functors satisfy the equations (where $\id$ denotes the
	identity functor)
	\begin{gather*}
		\partial^+\varepsilon =\partial^-\varepsilon=\id,\\
		\partial^+\op=\partial^-,\quad \op\circ \varepsilon=\varepsilon,\\
		\op\circ \op = \id,\\
		\op(G\mathrel{\square}H)=H^{\op}\mathrel{\square}G^{\op}
\end{gather*}
\end{prop}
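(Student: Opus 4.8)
The plan is to obtain all five functors as instances of the contravariant operation $\phi\mapsto\phi^{*}\colon\cat{C}(A')\to\cat{C}(A)$ attached to a map $\phi\colon A\to A'$ in \cat{Top} (exactly the construction $i^{*}$ of the excerpt, now for arbitrary $\phi$), and then to read off each displayed equation from a corresponding identity between maps of spaces. First I would record that $\phi\mapsto\phi^{*}$ is functorial and identity on objects: $\id^{*}=\id$, $(\psi\phi)^{*}=\phi^{*}\psi^{*}$, and each $\phi^{*}$ preserves composition and identities --- the only non-formal point being the routine verification that precomposition with $\phi\times X$ commutes with the diagonal $\Delta$ used in Definition~\ref{CatDef} and with projections. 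In particular $\cat{C}_{0}$ and $\cat{C}_{1}$ are categories with zero object by the remark preceding the proposition, and once $\partial^{\pm}$ are known to be functors, $\cat{C}_{1}\times_{\partial}\cat{C}_{1}$ is a category with componentwise composition.

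Next I would set $\varepsilon:=q^{*}$, $\partial^{+}:=(i^{+})^{*}$, $\partial^{-}:=(i^{-})^{*}$ and $\op:=r^{*}$, where $q\colon I_{A}B\to B$ and $i^{\pm}\colon B\to I_{A}B$ are as in the excerpt, and $r\colon I_{A}B\to I_{A}B$ is the involution induced by $t\mapsto 1-t$ on $I$; one checks that $r$ is well defined on the pushout (the map $(t\mapsto1-t)\times\id_{B}$ respects the identification collapsing $I\times A$) and that $ri^{+}=i^{-}$, $ri^{-}=i^{+}$, $r\circ r=\id$, $qr=q$. With these choices the first four blocks of equations are immediate translations along $\phi\mapsto\phi^{*}$: from $qi^{\pm}=1$ we get $\partial^{\pm}\varepsilon=\id$; from $qr=q$ we get $\op\circ\varepsilon=\varepsilon$; from $ri^{+}=i^{-}$ we get $\partial^{+}\op=\partial^{-}$ (and similarly $\partial^{-}\op=\partial^{+}$); and from $r\circ r=\id$ we get $\op\circ\op=\id$.

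For $\square$ I would introduce $J:=(I_{A}B)\cup_{B}(I_{A}B)$, the pushout gluing $B^{+}$ of the first copy to $B^{-}$ of the second along $i^{+}$ and $i^{-}$. Since \cat{Top} is convenient, $-\times X$ preserves this pushout, so a pair $(G,H)$ of $I_{A}B$-maps $X\to Y$ with $\partial^{+}H=\partial^{-}G$ is precisely a $J$-map $X\to Y$; checking that the componentwise composition of $\cat{C}_{1}\times_{\partial}\cat{C}_{1}$ matches composition in $\cat{C}(J)$ (again because $\times X$ commutes with diagonals) identifies $\cat{C}_{1}\times_{\partial}\cat{C}_{1}$ with $\cat{C}(J)$ by an isomorphism that is the identity on objects. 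I would then define the concatenation map $c\colon I_{A}B\to J$ which on $I\times B$ sends $(t,x)$ into the first copy by $t\mapsto 2t$ for $t\le\tfrac12$ and into the second copy by $t\mapsto 2t-1$ for $t\ge\tfrac12$; this is continuous (the two halves agree on $t=\tfrac12$, where both hit the point at which $B^{+}$ of the first copy is glued to $B^{-}$ of the second) and descends to $I_{A}B$ because it collapses each interval $I\times\{a\}$, $a\in A$. Setting $\square:=c^{*}$ through the above identification gives a functor, identity on objects, and since $ci^{-}$ is the $i^{-}$-end of the first copy and $ci^{+}$ the $i^{+}$-end of the second copy, one gets $\partial^{-}(G\sq H)=\partial^{-}H$ and $\partial^{+}(G\sq H)=\partial^{+}G$, matching the description $G\sq H\colon f\simeq h$ in the excerpt.

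The one equation requiring real bookkeeping is $\op(G\sq H)=H^{\op}\sq G^{\op}$, and this is where I expect the main obstacle --- lining up the orientation conventions. The device is the involution $\rho\colon J\to J$ that swaps the two copies and reverses each (on the first copy $\rho=\iota_{2}r$, on the second $\rho=\iota_{1}r$, where $\iota_{1},\iota_{2}$ are the two structure maps of $J$); it is well defined on $J$ because it interchanges the two halves of each glued pair. One then verifies two identities: $\rho c=cr$ as maps $I_{A}B\to J$ (a one-line check on $I\times B$ using $2(1-t)=2-2t$ and $2(1-t)-1=1-2t$), and, under the identification $\cat{C}_{1}\times_{\partial}\cat{C}_{1}\cong\cat{C}(J)$, $K\circ(\rho\times X)=K'$ where $K$ corresponds to $(G,H)$ and $K'$ to $(H^{\op},G^{\op})$ --- this holds because $\rho$ swaps copies while $H^{\op}=r^{*}H$ and $G^{\op}=r^{*}G$. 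Chaining these gives $H^{\op}\sq G^{\op}=c^{*}(K\circ(\rho\times X))=K\circ((\rho c)\times X)=K\circ((cr)\times X)=r^{*}(c^{*}K)=\op(G\sq H)$, which completes the proof.
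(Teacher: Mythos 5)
Your proof is correct and follows exactly the approach the paper sketches: it defines $\varepsilon$, $\partial^{\pm}$ as $q^{*}$, $(i^{\pm})^{*}$, identifies $\op$ with $r^{*}$ for the reversal $r$ of $I_{A}B$ (the ``opposite homotopy''), and realizes $\square$ as $c^{*}$ for the concatenation map into the double cylinder (the ``pasting''), then reads off each equation from an identity between the underlying maps of spaces. The paper's one-line proof leaves all of this implicit; you have simply supplied the verifications, including the only nontrivial one ($\op(G\sq H)=H^{\op}\sq G^{\op}$ via the swap-and-reverse involution $\rho$ and the relation $\rho c=cr$), and they check out.
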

We call such a pair $(\cat{C}_0,\cat{C}_1)$ a \emph{deformation category}.

\begin{proof}
	With the structure maps of the relative cylinder we define $\varepsilon=q^*$ for $q:I_AB\to B,\ \partial^+=(i^+)^*,\ \partial^-=(i^-)^*$. 
	Moreover $\op$ and $\square$ are given by the opposite homotopy and by pasting respectively.
\end{proof}

In a deformation category $(\cat{C}_0,\cat{C}_1)$ we write $H:f\simeq g$ if $H$ is a morphism in $\cat{C}_1$ with
$\partial^-H=f$ and $\partial^+H=g$.

Deformation categories form a category. Morphisms are pairs of functors 
$F_0:\cat{C}_0\to\cat{C}_0',\ F_1:\cat{C}_1\to \cat{C}_1'$ 
which are the identity on objects and which are compatible with $\varepsilon, \op, \partial^+, \partial^-$
and $\square$.

\section{Track categories}\label{TraCat}

A track category by definition is a groupoid enriched category with a strict zero object. In this section we compare
track categories and deformation categories. We consider examples of abelian track categories and triple Toda brackets.

\begin{lem}
	A track category is a deformation category.
\end{lem}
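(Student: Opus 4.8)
The plan is to extract the deformation‑category structure directly from the $2$‑categorical data of the track category, using the topological deformation category of the previous section as a guide: the groupoid of tracks between parallel maps is to play the role of $\cat{C}_1=\cat{C}(I_AB)$, and the underlying category of maps the role of $\cat{C}_0=\cat{C}(B)$.

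Let $\calT$ be a track category, i.e.\ a category enriched in groupoids — equivalently a strict $2$‑category in which every $2$‑cell is invertible — with strict zero object $*$. I take $\cat{C}_0$ to be the underlying ordinary category of $\calT$: objects are the objects of $\calT$ and a morphism $X\to Y$ is a $1$‑cell. I take $\cat{C}_1$ to be the \emph{category of tracks}: it has the same objects, a morphism $X\to Y$ is a track $H\colon f\Rightarrow g$ between parallel $1$‑cells $f,g\colon X\to Y$, the identity of $X$ is the trivial track $0_{\id_X}$ on $\id_X$, and the composite of $H\colon X\to Y$ and $K\colon Y\to Z$ is the horizontal (Godement) composite $K\ast H$ of tracks. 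Associativity and unitality of this composition are exactly the strict $2$‑category axioms. Since $*$ is a strict zero object the groupoids $\calT(X,*)$ and $\calT(*,X)$ are trivial, so $*$ is a zero object of both $\cat{C}_0$ and $\cat{C}_1$.

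Next I introduce the structure maps, all of which are the identity on objects. Let $\varepsilon\colon\cat{C}_0\to\cat{C}_1$ send a $1$‑cell $f$ to its trivial track $0_f\colon f\Rightarrow f$; let $\partial^-,\partial^+\colon\cat{C}_1\to\cat{C}_0$ send a track $H\colon f\Rightarrow g$ to its source $f$, respectively its target $g$; let $\op\colon\cat{C}_1\to\cat{C}_1$ send $H\colon f\Rightarrow g$ to its inverse $H^{\op}=H^{-1}\colon g\Rightarrow f$ in the groupoid $\calT(X,Y)$; and let $\square\colon\cat{C}_1\times_\partial\cat{C}_1\to\cat{C}_1$ send a compatible pair $(G,H)$ — with $H\colon f\Rightarrow g$, $G\colon g\Rightarrow h$, so that $\partial^+H=g=\partial^-G$ — to the vertical composite $G\square H\colon f\Rightarrow h$. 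That $\varepsilon$ and $\partial^{\pm}$ are functors is the Godement identity $0_g\ast 0_f=0_{gf}$ together with the fact that the source and target of a horizontal composite are the horizontal composites of the sources and of the targets. That $\op$ is a functor follows because horizontal composition is a functor between hom‑groupoids and hence preserves inverses, $(K\ast H)^{-1}=K^{-1}\ast H^{-1}$. For $\square$ one first checks, using functoriality of $\partial^{\pm}$, that $\cat{C}_1\times_\partial\cat{C}_1$ is a genuine category under componentwise composition; functoriality of $\square$ is then precisely the middle‑four interchange law
\begin{equation*}
	(G'\ast G)\square(H'\ast H)=(G'\square H')\ast(G\square H)
\end{equation*}
of the strict $2$‑category $\calT$.

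Finally the displayed equations are immediate from the groupoid axioms applied in the hom‑groupoids $\calT(X,Y)$: $\partial^+\varepsilon=\partial^-\varepsilon=\id$ since $0_f$ has source and target $f$; $\partial^+\op=\partial^-$ since $\partial^+H^{-1}=\partial^-H$; $\op\circ\varepsilon=\varepsilon$ since $0_f^{-1}=0_f$; $\op\circ\op=\id$ since $(H^{-1})^{-1}=H$; and $\op(G\square H)=H^{\op}\square G^{\op}$ since the inverse of a composite in a groupoid reverses the factors, $(G\square H)^{-1}=H^{-1}\square G^{-1}$. I do not anticipate a real obstacle: the one point that needs care is setting up $\cat{C}_1$ correctly — its objects are those of $\calT$ but its morphisms are tracks, remembered together with their parallel source and target $1$‑cells — and then recording that horizontal composition of tracks is well defined, associative and functorial, which is exactly the statement that a groupoid‑enriched category is a strict $2$‑category so that interchange holds on the nose. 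Everything else is bookkeeping with the axioms of a groupoid.
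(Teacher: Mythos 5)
Your construction coincides with the paper's: $\cat{C}_0$ is the underlying category of $1$-cells, $\cat{C}_1$ has tracks (with their source and target $1$-cells recorded) as morphisms with horizontal composition, and $\varepsilon$, $\partial^\pm$, $\op$, $\square$ are the trivial track, source/target, groupoid inverse, and vertical composition, exactly as in the paper's proof. The proposal is correct, differing only in spelling out the functoriality verifications (interchange, preservation of inverses) that the paper leaves implicit.
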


\begin{proof}
	Let \cat{C} be a track category. Then we define the category $\cat{C}_0$ by the category of $1$-cells $f$ in \cat{C}.
	The category $\cat{C}_1$ has the morphisms $X\to Y$ which are triples $(f,g,H)$ where $H:f\Rightarrow g$ is a $2$-cell
	(also termed \emph{track}). Composition $(f,g,H)(f',g',H')$ is defined by $(ff',gg',H*H')$ where $H*H'$ is the
	horizontal composition of tracks defined by 
	\begin{equation*}
		H*H'=(g')^*H \mathrel{\square} f_*H'=g_*H'\mathrel{\square}(f')^* H
	\end{equation*}
	Here $\square$ is the composition of $2$-cells (termed \emph{pasting} of tracks). We define $\varepsilon(f)=(f,f,0_f)$
	where $0_f$ is the identity $2$-cell of $f$.  Moreover $\partial^-(f,g,H)=f,\ \partial^+(f,g,H)\allowbreak =g$ and
	$\op(f,g,H)=(g,f,H^{-1})$ where $H^{-1}$ is the inverse of the $2$-cell $H$ in the $\mathrm{Hom}$-groupoid $\mathrm{Hom}(X,Y)$. 
\end{proof}
%CAVE: Overfull

\begin{lem}\label{DefoTraCatLem}
	A deformation category is a track category if and only if the following equations hold:
	\begin{align}
		H \sq\varepsilon(f)	&=H	&\text{where } f=\partial^-H,\notag\\
		\op(H)\sq H				&=\varepsilon(f)\notag\\
		(H\sq G)\sq F&= H\sq (G\sq F),\notag\\
		HH'&=H\varepsilon(g')\sq \varepsilon(f)H'	&\text{where }g'=\partial^+H'\tag{$*$}.
	\end{align}
\end{lem}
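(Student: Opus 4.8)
The plan is to prove the biconditional in \ref{DefoTraCatLem} by exhibiting, for any deformation category satisfying $(*)$ together with the first three listed equations, an explicit track-category structure, and conversely by checking that the four equations hold in the deformation category associated to a track category in the previous lemma. I would begin with the easier direction. Given a track category \cat{C}, recall that its associated deformation category has $\cat{C}_1$ consisting of triples $(f,g,H)$ with $H:f\Rightarrow g$, with $\square$ given by pasting, $\varepsilon(f)=(f,f,0_f)$, and $\op(f,g,H)=(g,f,H^{-1})$. The first three equations are then just the groupoid axioms in each $\mathrm{Hom}$-groupoid: the identity law $H\square 0_f=H$, the inverse law $H^{-1}\square H=0_f$, and associativity of pasting. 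The fourth equation $HH'=H\varepsilon(g')\square\varepsilon(f)H'$ is precisely the interchange/compatibility between horizontal and vertical (pasting) composition of $2$-cells, i.e. the identity $H*H'=g_*H'\square (f')^*H'$ unwound via the formula given in the proof of the preceding lemma, once one observes that $H\varepsilon(g')=(g')^*H$ as a horizontal composite with an identity track and $\varepsilon(f)H'=f_*H'$. So this direction is a matter of matching notation.

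For the substantive direction, suppose $(\cat{C}_0,\cat{C}_1)$ is a deformation category in which the four equations hold. I would define a groupoid-enriched category \cat{C} with the same objects, with $1$-cells the morphisms of $\cat{C}_0$, and with $\mathrm{Hom}_{\cat{C}}(X,Y)$ the groupoid whose objects are $1$-cells $f:X\to Y$ and whose morphisms $f\Rightarrow g$ are the elements $H\in\cat{C}_1(X,Y)$ with $\partial^-H=f$, $\partial^+H=g$; composition of $2$-cells is $\square$, identities are $\varepsilon(f)$, and inverses are $\op$. The first equation gives the right identity law, its ``op'' companion $H\square\op(H)=\varepsilon(g)$ (derivable from the given ones) gives the inverses, the second and third give the left identity/inverse and associativity, so each $\mathrm{Hom}$ is a genuine groupoid. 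Horizontal composition is then forced: for $H:f\Rightarrow g$ in $\cat{C}_1(Y,Z)$ and $H':f'\Rightarrow g'$ in $\cat{C}_1(X,Y)$ set $H*H'=HH'$, the composite in $\cat{C}_1$. One must check $\partial^\pm(HH')$ are the expected composites (functoriality of $\partial^\pm$), that $*$ is a functor $\mathrm{Hom}(Y,Z)\times\mathrm{Hom}(X,Y)\to\mathrm{Hom}(X,Z)$ (bifunctoriality), that it is associative and unital at the level of $1$- and $2$-cells, and that the interchange law with $\square$ holds. The zero object of $\cat{C}_0$ supplies the strict zero object.

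The main obstacle, and the step I would spend the most care on, is verifying the interchange law and the bifunctoriality of horizontal composition from equation $(*)$ alone. The point is that $(*)$ expresses an arbitrary horizontal composite $HH'$ as a $\square$-composite of the two ``partial'' horizontal composites $H\varepsilon(g')$ and $\varepsilon(f)H'$; by symmetry one wants the alternative decomposition $HH'=\varepsilon(g)H'\square H\varepsilon(f')$, and the equality of the two decompositions is exactly the interchange law. I expect this to follow by applying $(*)$ to suitable pairs (for instance to $H\square\varepsilon(f)$ and $H'$, or to the pair obtained after inserting $\varepsilon$'s on both sides) and then using functoriality of the composition in $\cat{C}_1$ together with the identity laws for $\square$; the algebra is bookkeeping but one has to be careful that every intermediate expression has matching $\partial^-$ and $\partial^+$ so that the $\square$'s are defined. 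Once interchange is in hand, middle-four-interchange-style arguments give that $*$ is a bifunctor and that $\varepsilon$ is compatible, and the remaining axioms (associativity and unitality of $*$ on $1$-cells) come from the corresponding axioms in $\cat{C}_0$ via the functors $\partial^\pm$ and the fact that $\cat{C}_1$ is itself a category with the same objects.
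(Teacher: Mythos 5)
The paper gives no proof of this lemma (it moves on after a one-line remark that the last equation ``is required by the horizontal composition of tracks''), so your proposal is supplying a proof where the paper has none. Your two directions are correctly organized, and the easy direction (track category $\Rightarrow$ the four equations) is fine once one notices $H\varepsilon(g')=(g')^*H$ and $\varepsilon(f)H'=f_*H'$, as you do; there is a minor typo in that paragraph ($H*H'=g_*H'\sq(f')^*H'$ should read $H*H'=g_*H'\sq(f')^*H$).

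For the converse, your plan is right but you misidentify the ``main obstacle.'' You propose to extract the interchange law from equation $(*)$, and flag this as the delicate step. In fact the interchange law is already built into the definition of a deformation category: $\square\colon\cat{C}_1\times_{\partial}\cat{C}_1\to\cat{C}_1$ is posited to be a \emph{functor}, and unwinding functoriality on the componentwise composition of $\cat{C}_1\times_{\partial}\cat{C}_1$ gives exactly
\begin{equation*}
	(G_2G_1)\sq(H_2H_1)=(G_2\sq H_2)(G_1\sq H_1),
\end{equation*}
which is the middle-four interchange between pasting $\sq$ and composition in $\cat{C}_1$. There is therefore nothing to derive: once equations (1)--(3) (together with the $\op$-axioms $\op\circ\op=\id$, $\op\circ\varepsilon=\varepsilon$, $\partial^{\pm}\op=\partial^{\mp}$) make each $\mathrm{Hom}$ a groupoid, horizontal composition $H*H':=HH'$ is automatically a bifunctor by functoriality of $\partial^{\pm}$, $\varepsilon$, and $\sq$. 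Indeed, specializing the displayed interchange to $(G_2,H_2)=(H,\varepsilon(f))$ and $(G_1,H_1)=(\varepsilon(g'),H')$ and then using the unit laws $H\sq\varepsilon(f)=H$ and $\varepsilon(g')\sq H'=H'$ yields $(*)$ itself; so $(*)$ is not an independent constraint you must fight with but a consequence you may note in passing. If you restructure the hard direction accordingly --- build the hom-groupoids from (1)--(3) and the $\op$-identities, then invoke the functoriality of $\sq$ and $\varepsilon$ for bifunctoriality of horizontal composition --- the argument closes cleanly with no bookkeeping over $\partial^{\pm}$ compatibilities beyond what you have already checked. The remaining items (strict zero object from the zero objects of $\cat{C}_0,\cat{C}_1$, associativity and unitality on $1$-cells from $\cat{C}_0$ being a category) are as you say.
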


Here the last equation is required by the horizontal composition of tracks. The category of track categories is a full
subcategory of the category of deformation categories.

\begin{lem}
	A deformation category $(\cat{C}_0,\cat{C}_1)$ yields a natural equivalence relation $\simeq$ on the category
	$\cat{C}_0$ so that the homotopy category $\cat{C}_0\modu$ is defined.
\end{lem}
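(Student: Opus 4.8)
The plan is to define the \emph{homotopy relation} $\simeq$ on $\cat{C}_0$ by setting, for a parallel pair $f,g\colon X\to Y$ in $\cat{C}_0$, that $f\simeq g$ precisely when there is a morphism $H\colon X\to Y$ in $\cat{C}_1$ with $\partial^-H=f$ and $\partial^+H=g$ (that is, $H\colon f\simeq g$ in the notation introduced above). Since $\partial^\pm$ are the identity on objects this really is a relation between parallel morphisms of $\cat{C}_0$, and I would check that it is an equivalence relation using precisely the structure functors. \emph{Reflexivity} comes from $\varepsilon(f)\colon f\simeq f$, valid because $\partial^+\varepsilon=\partial^-\varepsilon=\id$. \emph{Symmetry} comes from $\op(H)\colon g\simeq f$, using $\partial^+\op=\partial^-$ together with its consequence $\partial^-\op=\partial^+$ (obtained by precomposing $\partial^+\op=\partial^-$ with $\op$ and using $\op\circ\op=\id$). \emph{Transitivity}: if $H\colon f\simeq g$ and $G\colon g\simeq h$ then $\partial^+H=g=\partial^-G$, so $(G,H)$ is a morphism of $\cat{C}_1\times_\partial\cat{C}_1$ and $G\sq H$ is defined; since pasting respects boundaries one has $\partial^-(G\sq H)=\partial^-H=f$ and $\partial^+(G\sq H)=\partial^+G=h$, so $G\sq H\colon f\simeq h$.

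Next I would show that $\simeq$ is \emph{natural}, i.e.\ a congruence compatible with composition in $\cat{C}_0$, which is what makes the quotient legitimate. If $H\colon f\simeq g$ with $f,g\colon X\to Y$ and $H'\colon f'\simeq g'$ with $f',g'\colon Y\to Z$, then because $\partial^-,\partial^+$ are functors the composite $H'H\colon X\to Z$ in the category $\cat{C}_1$ satisfies $\partial^-(H'H)=\partial^-(H')\,\partial^-(H)=f'f$ and $\partial^+(H'H)=g'g$; hence $f'f\simeq g'g$. Taking $H'$ or $H$ to be $\varepsilon$ of a fixed morphism recovers the one-sided forms $kf\simeq kg$ and $fj\simeq gj$ for composable $k,j$. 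With $\simeq$ an equivalence relation and a congruence, the \emph{homotopy category} $\cat{C}_0\modu$ is now forced: it has the objects of $\cat{C}_0$, hom-sets $\mathrm{Hom}_{\cat{C}_0}(X,Y)/\!\simeq$, composition $[g][f]:=[gf]$, and identities $[\id_X]$, with associativity and the unit laws descending from $\cat{C}_0$; and $f\mapsto[f]$ is an identity-on-objects quotient functor $\cat{C}_0\to\cat{C}_0\modu$. (For the stronger reading of ``natural'': a morphism $(F_0,F_1)$ of deformation categories carries $H\colon f\simeq g$ to $F_1(H)\colon F_0f\simeq F_0g$ by compatibility of $F_1$ with $\partial^\pm$, so $F_0$ descends to a functor $\cat{C}_0\modu\to\cat{C}_0'\modu$ and $\modu$ is itself functorial.)

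Essentially every step is forced by the functoriality packaged into the definition of a deformation category, so I do not anticipate a real obstacle; the one point deserving care is the fact invoked in transitivity, that the pasted track $G\sq H$ has $\partial^-(G\sq H)=\partial^-H$ and $\partial^+(G\sq H)=\partial^+G$. This is the defining feature of ``pasting of homotopies'' and holds in every track category as well as in the topological model of the first proposition; if it is not counted among the explicitly displayed identities of a deformation category, it should be read off from the chosen normalisation of $\sq$. Granting it, the verification reduces to the bookkeeping indicated above.
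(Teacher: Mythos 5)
Your proposal defines $\simeq$ exactly as the paper does and then supplies the routine verification (reflexivity via $\varepsilon$, symmetry via $\op$, transitivity via $\sq$, congruence via functoriality of $\partial^\pm$) that the paper leaves implicit; this is the same approach, just spelled out. Your observation that the boundary identities $\partial^-(G\sq H)=\partial^-H$ and $\partial^+(G\sq H)=\partial^+G$ are not among the displayed axioms of a deformation category but are needed for transitivity is a fair point --- they are implicit in the term \emph{pasting} and in the topological model where $G\sq H:f\simeq h$ is asserted, and should properly be read as part of the structure of $\sq$.
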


\begin{proof}
	For morphisms $f,g:X\to Y$ in $\cat{C}_0$ we write $f\simeq g$ if and only if there exists $H:X\to Y$ in $\cat{C}_1$
	with $\partial^-H=f$ and $\partial^+H=g$.
\end{proof}

\begin{ex}
	Let $B$ be a point and let $A$ be the empty set. Then $I_AB$ is the interval $I$. The category
	$\cat{C}_0=\cat{C}(\pt)$ coincides with $\cat{Top}^*$ and the morphisms in $\cat{C}_1=\cat{C}(I)$ are
	homotopies of pointed maps. The category $\cat{C}_0\modu$ coincided with the homotopy category $\cat{Top}^*\modu$.
	Homotopies between homotopies yield a natural equivalence relation $\simeq$ on $\cat{C}_1=\cat{C}(I)$ such that
	$(\cat{C}_0,\cat{C}_1\modu)$ is the track category associated to $\cat{Top}^*\modu$. This is a quotient of the
	deformation category $(\cat{C}_0,\cat{C}_1)$.
\end{ex}

A track category $\cat{C}=(\cat{C}_0,\cat{C}_1)$ is \emph{abelian} if all automorphism groups in hom-groupoids are
abelian groups. We denote such automorphism groups by $\mathrm{Aut}_{\square}(f)$ where $f$ is a morphism in $\cat{C}_0$ and
$H\in\mathrm{Aut}_{\square}(f)$ is of the form $H:f\simeq f$.

\begin{ex}\label{AbTraCatEx1}
	Let $\mathfrak{X}$ be a class of co-$H$-groups in $\cat{C}=\cat{Top}^*$ or let $\mathfrak{X}$ be a class of $H$-groups
	in $\cat{Top}^*$. Let $\cat{Top}^*[ [ \mathfrak{X} ] ]$ be the track subcategory
	of $(\cat{C}(\pt), \cat{C}(I)\modu)$ consisting of objects in $\mathfrak{X}$. Then 
	$\cat{Top}^*[ [ \mathfrak{X} ] ]$ is an abelian track category.
\end{ex}

The example has a generalization concerning ``under'' and ``over'' categories respectively which play the role of
``left'' resp. ``right'' modules.

\begin{defn}
	Let $\cat{C}$ be a category and let $\mathfrak{X}$ be a class of objects in $\cat{C}$ yielding the full subcategory
	$\cat{C}\{ \mathfrak{X} \}$ of $\cat{C}$. Let $W$ be an object in $\cat{C}$. Then the \emph{under
	category $\cat{C}\{W\to \mathfrak{X} \}$} consists of the objects in $\mathfrak{X}$ and the object $W$.
	Morphisms are all morphisms in $\cat{C}\{ \mathfrak{X} \}$ and all morphisms $W\to Y$ in $\cat{C}$ with
	$Y\in \mathfrak{X}$. Also the identity of $W$ is a morphism of the under category. The \emph{over category
	$\cat{C}\{ \mathfrak{X}\to W \}$} is defined in a dual way as a subcategory of $\cat{C}$.
\end{defn}

\begin{ex}\label{AbTraCatEx2}
	Let $\mathfrak{X}$ be a class of co-$H$-groups in $\cat{Top}^*$ and let $\cat{Top}^*[[ 
	\mathfrak{X}\to W
	]]$ be the track category given by the over category $\cat{Top}^*\{ \mathfrak{X}\to W \}$. Then
	 $\cat{Top}^*[ [ \mathfrak{X}\to W] ]$ is an abelian track category. Dually for a class
	 $\mathfrak{X}$ of $H$-groups the under track category $\cat{Top}^*[ [W\to \mathfrak{X}	] ]$ is
	 abelian. These are again full track subcategories of $(\cat{C}(\pt),\cat{C}(I)\modu)$.
\end{ex}

Given a category \cat{K} we define the \emph{category $F\cat{K}$ of factorizations} in $\cat{K}$. Objects in $F\cat{K}$
are morphisms $f$ in \cat{K} and a morphism $f\to g$ in $F\cat{K}$ is a pair $(a,b)$ of morphisms with $bfa=g$. We have
morphisms $(b,1):f\to bf$ and $(1,a):f\to fa$. A functor $D:F\cat{K}\to\cat{Ab}$ is termed a \emph{natural system} of
abelian groups. We write $D_f=D(f)$ and $(b,1)_*=b_*:D_f\to D_{bf}$ and $(1,a)_*=a^*:D_f\to D_{fa}$. In \cite{BW} the
\emph{cohomology} $H^n(\cat{K},D)$ is defined.

Let $(\cat{C}_0,\cat{C}_1)$ be an abelian track category. Then there is an \emph{associated} natural system $D$ on
$\cat{C}_0\modu$ together with a natural isomorphism of abelian groups
\begin{equation*}
	\sigma:D_{\{f  \}}\cong\mathrm{Aut}_{\square}(f)
\end{equation*}
for $f$ in $\cat{C}_0$, see \cite{BJCl}. Here $\{ f \}$ denotes the homotopy class of $f$ in
$\cat{C}_0\modu$. We call $(\cat{C}_0,\cat{C}_1)$ a \emph{linear track extension} of $\cat{C}_0\modu$ by $D$. It is
proved in \cite{BD}, \cite{P} that equivalence classes of such linear track extensions are in 1-1 correspondence with
elements in the cohomology $H^3(\cat{C}_0\modu,D)$, \cite{BW}.

We now describe the natural system associated to the abelian track category in the above examples of \ref{AbTraCatEx1} 
and \ref{AbTraCatEx2}. Here $\mathfrak{X}$
is a class of co-$H$-groups or of $H$-groups. For pointed spaces $X,Y$ let $X\vee Y$ be the coproduct in $\cat{Top}^*$
with inclusions $i_1,i_2$ and let $X\times Y$ be the product of spaces with projections $p_1,p_2$. For a map $f:X\to
Y$ in $\cat{Top}^*\modu$ with $X,Y\in \mathfrak{X}$ we define in the co-$H$-case
\begin{equation*}
	\begin{aligned}
		\nabla f&:X\to Y\vee Y\\ 
		\nabla f&= -i_2 f+f(i_2+i_1).
	\end{aligned}
	\begin{gathered}
		\text{in }\cat{Top}^*\modu,\\
		\
\end{gathered}
\end{equation*}
	Since $p_2(\nabla f)=0$ the \emph{partial suspension} ($n\geq 1$)
\begin{equation*}
	E^n\nabla f:\Sigma^nX\to (\Sigma^nY)\vee Y
\end{equation*}
is defined, see \cite{BOb}. In the $H$-case we get
\begin{equation*}
	\begin{aligned}
		\nabla f&:X\times X \to Y\\
		\nabla f&=-fp_2+(p_2+p_1)f.
	\end{aligned}
	\begin{gathered}
		\text{in }\cat{Top}^*\modu,\\
		\
	\end{gathered}
\end{equation*}
Since $(\nabla f)i_1 =0$ the \emph{partial loop operation} ($n\geq 1$)
\begin{equation*}
	L^n\nabla f:(\Omega^nX)\times X\to Y
\end{equation*}
is defined, see \cite{BOb}. In \cite{BOb} we describe rules to compute $E^n\nabla f$ and $L^n\nabla f$ explicitly, see
also \cite{BAl}.

\begin{defn}
	Let $\mathfrak{X}$ be a class of co-$H$-groups. Then we define a natural system $D_{\Sigma}^n$ on the over category
	$\cat{K}=\cat{Top}^*\{ \mathfrak{X}\to W\}\modu$. If $\mathfrak{X}$ is a class of $H$-groups we define a
	natural system $D_{\Omega}^n$ on the under category $\cat{K}=\cat{Top}^*\{
	W\to\mathfrak{X}\}\modu$. 
	Consider maps
	\begin{equation*}
		X'\xrightarrow{a}X\xrightarrow{f}Y\xrightarrow{b}Y'
\end{equation*}
in $\cat{K}$. In the co-$H$-case we define
\begin{equation*}
	D_{\Sigma}^n(f)=[ \Sigma^n X,Y ]
\end{equation*}
and for $\alpha\in D_{\Sigma}^n(f)$ let $b_*(\alpha)=b\alpha$ and $a^*(\alpha)=(\alpha,f)(E^n\nabla a)$. 
Moreover in the $H$-case we define
\begin{equation*}
	D_{\Omega}^n(f)=[ X,\Omega^nY ]
\end{equation*}
and for $\beta\in D_{\Omega}^n(f)$, let $b_*(\beta)=(L^n\nabla b)(\beta,f)$ and $\alpha^*(\beta)=\beta a$.
\end{defn}

We point out that in general $D_{\Sigma}^n$ and $D_{\Omega}^n$ are not given by the bifunctors 	
$[\Sigma^n X,Y ]$ and $[ X,\Omega^nY ]$ respectively. In \cite{BJ}
we prove for the examples above:

\begin{prop}
	The natural systems associated to the abelian track categories $\cat{Top}^*[ [ \mathfrak{X}\to W]
	]$ and $\cat{Top}^*[ [W\to \mathfrak{X}] ]$ are $D_{\Sigma}^1$ and $D_{\Omega}^1$
	respectively.
\end{prop}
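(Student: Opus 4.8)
The plan is to read off the structure maps of the associated natural system $D$ from the whiskering of tracks. For a linear track extension $(\cat{C}_0,\cat{C}_1)$ of $\cat{K}=\cat{C}_0\modu$ by $D$, the isomorphism $\sigma:D_{\{f\}}\cong\mathrm{Aut}_{\square}(f)$ transports the natural--system structure to the whiskering operations: for a $1$-cell $f:X\to Y$, a self-track $H:f\simeq f$, and morphisms $a:X'\to X$, $b:Y\to Y'$ of $\cat{K}$ with chosen lifts, $\sigma(b_*x)$ is the left whisker of $b$ with $\sigma(x)$ and $\sigma(a^*x)$ is the right whisker of $\sigma(x)$ with $a$. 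In the present situation $\cat{C}_1=\cat{C}(I)\modu$, so a self-track of $f$ is a pointed homotopy $X\times I\to Y$ with both ends equal to $f$, modulo homotopies rel endpoints; comparison with the constant homotopy $\varepsilon(f)$ identifies $\mathrm{Aut}_{\square}(f)$ with $[\Sigma X,Y]$ in the co-$H$-case and, dually, with $[X,\Omega Y]$ in the $H$-case --- this is the classical isomorphism underlying $\sigma$, see \cite{BJCl}, \cite{BOb}. It then remains to evaluate the two whiskering operations under these identifications.

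Two of the four computations are immediate. Left whiskering, co-$H$-case: if $H:f\simeq f$ has class $\alpha\in[\Sigma X,Y]$, then the left whisker of $b$ with $H$ is the homotopy $X\times I\xrightarrow{H}Y\xrightarrow{b}Y'$, and naturality of the suspension adjunction in the target shows it has class $b\alpha$; this is exactly $b_*(\alpha)=b\alpha$ in the definition of $D_{\Sigma}^1$. Dually, right whiskering, $H$-case: the right whisker of $H$ with $a$ is $X'\times I\xrightarrow{a\times\id}X\times I\xrightarrow{H}Y$, of class $\beta a$ in $[X',\Omega Y]$, which is $\alpha^*(\beta)=\beta a$ in the definition of $D_{\Omega}^1$.

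The substantive point is right whiskering in the co-$H$-case (equivalently, by duality, left whiskering in the $H$-case); this is where the natural system fails to be the bifunctor $[\Sigma X,Y]$. Given a pointed map $a:X'\to X$, not assumed to be a co-$H$-map, the self-homotopy $H(a\times\id)$ of $fa$ is not classified by $\alpha\circ\Sigma a$, and the deviation is precisely the partial suspension. Recall $\nabla a=-i_2a+a(i_2+i_1):X'\to X\vee X$ satisfies $p_2(\nabla a)=0$, so $E\nabla a:\Sigma X'\to(\Sigma X)\vee X$ is defined, and let $(\alpha,f):(\Sigma X)\vee X\to Y$ restrict to $\alpha$ on $\Sigma X$ and to $f$ on $X$. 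Working with the cylinder model $\Sigma X'=X'\wedge(I/\partial I)$ and the explicit rules for $E\nabla a$ in \cite{BOb}, \cite{BAl}, one splits $H(a\times\id)$ rel endpoints into two contributions: the part of the comultiplication of $X$ along which $a$ fails to be primitive yields a constant-homotopy summand, giving the ``$f$''-component, while the residual variation gives the ``$\alpha$''-component, and the bookkeeping of these two contributions is exactly the definition of $E\nabla a$. Hence the class of $H(a\times\id)$ is $(\alpha,f)(E\nabla a)$, which is $a^*(\alpha)$ in the definition of $D_{\Sigma}^1$. I expect this to be the main obstacle, as it rests on the partial-suspension calculus of \cite{BOb}; once it is available, the $H$-case counterpart --- the left whisker of $b$ with $H$ has class $(L^1\nabla b)(\beta,f)$, using $\nabla b=-bp_2+(p_2+p_1)b$ with $(\nabla b)i_1=0$ and the partial loop operation $L\nabla b:(\Omega Y)\times Y\to Y'$ --- is obtained by dualizing the same argument.

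Collecting the four computations, the natural system associated to $\cat{Top}^*[[\mathfrak{X}\to W]]$ coincides with $D_{\Sigma}^1$ both on objects and on the two types of structure map, and dually the one associated to $\cat{Top}^*[[W\to\mathfrak{X}]]$ coincides with $D_{\Omega}^1$.
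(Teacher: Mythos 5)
The paper does not prove this proposition in the text: it is stated as a result of \cite{BJ}, so there is no in-paper argument to compare your attempt against. That said, your plan is the right one and presumably mirrors what \cite{BJ} does: transport the natural-system structure through $\sigma:D_{\{f\}}\cong\mathrm{Aut}_{\square}(f)$, identify the automorphism groups with $[\Sigma X,Y]$ (co-$H$ case) resp.\ $[X,\Omega Y]$ ($H$ case), and evaluate the two whiskering operations under these identifications. The two ``easy'' computations -- post-composition by $b$ in the co-$H$ case and pre-composition by $a$ in the $H$ case, where the whiskering does not touch the side carrying the (co-)$H$ structure -- are handled correctly and do yield $b_*(\alpha)=b\alpha$ and $a^*(\beta)=\beta a$.

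The step you yourself call the main obstacle is, however, a genuine gap: showing that under $\mathrm{Aut}_{\square}(fa)\cong[\Sigma X',Y]$ the class of $H(a\times\id)$ is $(\alpha,f)(E\nabla a)$. This is the entire nontrivial content of the proposition -- it is precisely what makes $D_{\Sigma}^1$ a natural system rather than the bifunctor $(X,Y)\mapsto[\Sigma X,Y]$ -- and your paragraph (``one splits $H(a\times\id)$ rel endpoints into two contributions\ldots{} the bookkeeping of these two contributions is exactly the definition of $E\nabla a$'') describes the desired conclusion rather than proving it. What is actually needed is an explicit homotopy rel the two ends of the cylinder, built from the comultiplications of $X'$ and $X$ and the defining homotopy of the partial suspension $E$, between the based transform of $H(a\times\id)$ and the map induced by $E\nabla a$ followed by $(\alpha,f)$; only after that computation (which is the content of \cite{BJ}, built on the calculus of \cite{BOb}, \cite{BAl}) does the formula $a^*(\alpha)=(\alpha,f)(E\nabla a)$ follow, and dually for $b_*(\beta)=(L\nabla b)(\beta,f)$. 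Flagging the obstacle and pointing to the correct references is good diagnosis, but it does not close the gap.
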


Next we describe \emph{triple Toda brackets} in abelian track categories $(\cat{C}_0,\cat{C}_1)$ with natural system $D$
on $\cat{C}_0\modu$. Let
\begin{equation*}
	W\xleftarrow{\alpha}X\xleftarrow{\beta}Y\xleftarrow{\gamma}Z
\end{equation*}
be morphisms in $\cat{C}_0\modu$ with $\alpha\beta=0$ and $\beta\gamma=0$. Here $0=0(Y,W):Y\to*\to W$ is the
\emph{trivial map} given by the zero object $*$. Then we can choose representatives $f,g,h$ of $\alpha,\beta,\gamma$
respectively and $H:Y\to W, G:Z\to X$ in $\cat{C}_1$ with
\begin{equation*}
	\partial^+H=0,\ \partial^-H=fg,\ \partial^+G=0,\ \partial^-G=gh
\end{equation*}
Hence the element
\begin{equation*}
	c(H,G)=(H\varepsilon(g))\sq(\varepsilon(f)G^{\op})\in \mathrm{Aut}_{\square}(0(Z,W))=D_{0(Z,W)}
\end{equation*}
is defined. The collection of all such elements yields the Toda bracket in the quotient group
\begin{equation*}
	\left<\alpha,\beta,\gamma\right>\in D_{0(Z,W)}\left/\gamma^*D_{0(Y,W)}+\alpha_*D_{0(Z,X)}\right.
\end{equation*}

\section{The indexing set of balls}\label{IndSet}

We introduce the indexing set of balls which generalizes the set of cubes, but has more properties concerning relative
cylinders. The category $\mathbb{B}$ of such balls yields the $\mathbb{B}$-sets generalizing cubical sets. We consider
$\mathbb{B}$-categories.

A \emph{ball} $B$ of dimension $n$ is a finite regular CW-complex together with a subcomplex $\partial B$ such that
$(B,\partial B)$ is homeomorphic to the Euclidean ball $(E^n,S^{n-1})$ with $E^n=\{ x\in\mathbb{R}^n, \lVert x\rVert \leq 1 \}$
and $S^{n-1}=\{ x\in\mathbb{R}^n, \lVert x\rVert = 1\}$. We say that the ball $B$ is \emph{elementary} if the
CW-complex $B$ has exactly one open $n$-cell. If $B$ is a ball then the relative cylinder (as a quotient complex of
$I\times B$)
\begin{equation*}
	J(B)=I_{\partial B}B
\end{equation*}
is a ball of dimension $\dim(B)+1$. Moreover if $B$ and $B'$ are balls then the \emph{product} CW-complex
$B\times B'$ is a ball of dimension $\dim(B)+\dim(B')$. 
A \emph{ball pair} $(B,A)$ is a ball $B$ together with a subcomplex $A\subset \partial B$ which is a ball such that there
exists a homeomorphism of pairs $(J(A),A^-)\approx (B,A)$ extending the identity of $A$.
If $A_{\op}$ is the closure of $\partial B - A$ then also $(B,A_{\op})$ is a ball pair which we call the \emph{opposite} of
$(B,A)$. If $(B,A)$ and $(B',A)$ are ball pairs then the \emph{gluing}
\begin{equation*}
	B\cup B'=B \cup_{A} B'
\end{equation*}
is again a ball of dimension $\dim(B)=\dim(B')$.

\begin{defn}
	The \emph{indexing set $\mathbb{B}$ of balls} is the smallest set of balls with the following properties:
	\begin{enumerate}
		\item $\pt\in \mathbb{B}$,
		\item If $B\in \mathbb{B}$ then $J(B)\in \mathbb{B}$,
		\item If $B,B'\in \mathbb{B}$ then $B\times B'\in \mathbb{B}$,
		\item If $(B,A)$ is a ball pair with $B\in \mathbb{B}$ then $A\in \mathbb{B}$,
		\item If $(B,A)$ and $(B',A)$ are ball pairs with $B,B'\in\mathbb{B}$ then $B\cup_A B'\in \mathbb{B}$. Conversely if
			$B\cup_A B'\in\mathbb{B}$ then also $B,B'\in \mathbb{B}$.
	\end{enumerate}
\end{defn}

We now describe some examples of balls in $\mathbb{B}$. Since $\{ \pt \}\in\mathbb{B}$ also the
\emph{interval} from 0 to 1 is a ball
\begin{equation*}
	I=[ 0,1 ]=J(\pt)\in\mathbb{B}.
\end{equation*}
Moreover inductively $J^n,I^n\in\mathbb{B},\ n\geq 0$, where $J^0=I^0=\pt$ and $J^1=I^1=I$ and for $n\geq 1$
\begin{align*}
	J^{n+1}&=J(J^n)\\
	I^{n+1}&=I\times I^n
\end{align*}
Here $I^n$ is the \emph{$n$-cube} and $J^n$ is the \emph{$n$-globe} with hemispheres $i^-(J^{n-1}), i^+(J^{n-1})$.

\begin{center}
\begin{pspicture}(-3,-1.3)(3,1.3)
	\rput(-3.4,0){$J^2=$}
	\pscircle[fillstyle=hlines](-2,0){0.75}
	\psdots(-1.27,0)
	\psdots(-2.73,0)
	\rput(0,0){,}
	\rput(1.25,0){$I^2=$}
	\psframe[fillstyle=hlines](1.9,-0.75)(3.4,0.75)
\end{pspicture}
\end{center}

Let $\mathbb{B}^n$ be the set of $n$-dimensional balls in $\mathbb{B}$. Then $\mathbb{B}^0$ contains only the point.
Moreover $\mathbb{B}^1$ is given by all \emph{long intervals} $[ 0,n ]\subset \mathbb{R}$ with 0-cells given
by $0,1,\ldots,n\in\mathbb{R}$. Next $\mathbb{B}^2$ consists of balls $J^2,I^2,\Delta^2$ (where $\Delta^2$ is the
2-simplex) and the balls obtained by a regular sequence of gluings of these balls, for example
\begin{center}
	\begin{pspicture}(-1.5,-0.8)(1.5,1.3)
		\rput(-1.2,0.3){$T^2=$}
		\pspolygon(0,0)(-0.5,0.5)(0,1)(0.5,0.5)
		\pspolygon(0,-0.5)(0,0)(0.5,0.5)(0.5,0)
		\pspolygon(0,-0.5)(0,0)(-0.5,0.5)(-0.5,0)
	\end{pspicture}
\end{center}
is such a ball, see section \ref{BCatTen} below.

Let $\mathbb{B}$ be the full subcategory of $\cat{Top}$ with objects in the indexing set of balls. Let
$\mathbb{B}_{\partial}$ be the subcategory of pair maps $f:(B,\partial B)\to (A,\partial A)$ with $A,B\in \mathbb{B}$ and let
$B_{\partial}\modu$ be the homotopy category relative the boundary with $f\simeq g\Leftrightarrow f|\partial
B=g|\partial B$ and $f\simeq g\rel \partial B$.

\begin{remark}
	Given a small category \cat{D} a \emph{\cat{D}-object} $X$ in a category $\cat{U}$ is a contravariant functor
	$X:\cat{D}\to \cat{U}$. This is a $\cat{D}$-set if \cat{U} is the category of sets. Moreover let $\calO$ be a fixed
	class of objects with $*\in \calO$ and let $\cat{cat}(\calO)$ be the category of categories for which the class of
	objects is $\calO$ with zero object $\ast$. Morphisms are functors which are the identity on $\calO$. Then a
	\emph{$\cat{D}$-category} is a \cat{D}-object in $\cat{cat}(\calO)$. For the category $\mathbb{B}$ of balls we shall
	considers $\mathbb{B}$-sets and $\mathbb{B}$-categories. They generalize cubical sets and cubical categories
	respectively since the category of cubes is a subcategory of $\mathbb{B}$.
\end{remark}

For $B\in\mathbb{B}$ we have the \emph{category $\cat{C}(B)$ of $B$-maps} defined in \ref{CatDef}. Moreover
$f:B\to A$ in $\mathbb{B}$ induces a functor $f^*:\cat{C}(A)\to \cat{C}(B)$ in $\cat{cat}(\calO)$ where $\calO$ is the class
of pointed spaces. Hence
\begin{equation*}
	\cat{C}=\{ \cat{C}(B),B\in\mathbb{B} \}
\end{equation*}
is a $\mathbb{B}$-category which we call the \emph{topological $\mathbb{B}$-category}. The morphism sets $\cat{C}(X,Y)$
of $B$-maps $X\to Y$ for $B\in \mathbb{B}$ form a $\mathbb{B}$-set.

For any $\mathbb{B}$-category $\cat{C}$ and a ball pair $(B,A)$ let
\begin{equation*}
	\partial_A=(i_A)^*:\cat{C}(B)\to\cat{C}(A)
\end{equation*}
be induced by the inclusion $i_A:A\subset B$. We also write $\partial_Af=f|A$.
For example for $(JB,B^{\pm})$ we get
\begin{equation*}
	\partial^{\pm}=\partial_{B^{\pm}}:\cat{C}(JB)\to \cat{C}(B)
\end{equation*}
and for $H$ in $\cat{C}(JB)$ we write $H:f\simeq g$ if $\partial^-H=f$ and $\partial^+H=g$.

\section{$\mathbb{B}$-categories with unions}\label{BCatUni}

Since the union of balls is defined in $\mathbb{B}$ we can consider unions and gluings in a $\mathbb{B}$-category. 

Let \cat{C} be the $\mathbb{B}$-category of $B$-maps defined in section \ref{IndSet}. Given ball pairs $(B,A)$ and
$(B',A)$ with $B,B'\in \mathbb{B}$ one obtains the \emph{gluing functor}
\begin{equation*}
	\cup:\cat{C}(B)\times_{\cat{C}(A)}\cat{C}(B')\to \cat{C}(B\cup_A B').
\end{equation*}
Here the pullback category is given by pairs $(F:X\to Y)\in\cat{C}(B), (G:X\to Y)\in\cat{C}(B')$ with $\partial_A
F=\partial_A G$ and $F\cup G$ is defined by
\begin{equation*}
	(B\cup_A B')\times X=(B\times X)_{A\times X}(B'\times X)\xrightarrow{F\cup G}Y.
\end{equation*}
The gluing functor is \emph{natural} with respect to pair maps $f:(B,A)\to (B_1,A_1), g:(B',A)\to(B_1',A_1)$ in
$\mathbb{B}$ which coincide on $A,\ f|A=g|A$. Then
\begin{equation*}
	f^*F\cup g^*G=(f\cup g)^*(F\cup G)
\end{equation*}
We now describe the gluing rule in the $\mathbb{B}$-category \cat{C}.

Let $\overline{B}$ be a ball and let $M$ be a set of balls $B$ which are subcomplexes of $\overline{B}$ and
$\dim B=\dim\overline{B}$. Moreover assume that for $B,B'\in M$ with $B\neq B'$ we have $(B-\partial B)\cap
(B'-\partial B')=\emptyset$ and 
\begin{equation*}
	\bigcup_{B\in M}B=\overline{B}
\end{equation*}
Then we say that $\overline{B}$ is a \emph{union of balls} in $M$. A \emph{regular sequence} of balls in $\overline{B}$ is a
bijection $\{ B_1,\ldots,B_k \}\approx M$ (where $k$ is the number of elements in $M$) such that
\begin{equation*}
	B_{\leq i}=\bigcup_{j\leq i}B_j,\quad 1\leq i<k,
\end{equation*}
is a ball and $(B_{\leq i},A_i), (B_{i+1},A_i)$ are ball pairs with $A_i=B_{\leq i}\cap B_{i+1}$. If $B$-maps $f_B:X\to
Y$ in $\cat{C}(B)$ are given for $B\in M$ then each regular sequence in $\overline{B}$ yields the iterated gluing operation 
\begin{equation*}
	\left( \ldots \left( \left( f_{B_1}\cup f_{B_2} \right)\cup f_{B_3} \right)\ldots \cup f_{B_k} \right)
\end{equation*}
which is a $\overline{B}$-map $X\to Y$. The \emph{gluing rule} is the fact that this $\overline{B}$-map is independent of the
choice of the regular sequence in $\overline{B}$.

\begin{lem}
	Let \cat{C} be a $\mathbb{B}$-category with unions. Then for $B\in \mathbb{B}$ the pair
	$(\cat{C}(B),\cat{C}(JB))$ is
	a deformation category with structure $\varepsilon,\op,\square$. This yields the homotopy category
	$\cat{C}[B]=\cat{C}(B)\modu$ and $\cat{C}[_-]=\{ \cat{C}[B], B\in\mathbb{B} \}$ is a
	$( \mathbb{B}_{\partial}\modu )$-category.
\end{lem}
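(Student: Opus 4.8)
The plan is to produce all of $\partial^{\pm},\varepsilon,\op,\square$ by applying the contravariant functor $\cat{C}$ to suitable maps and gluings of balls, then to deduce the deformation‑category equations from the corresponding identities in $\mathbb{B}$, and finally to read off the two statements about homotopy categories from the earlier lemma on deformation categories together with a naturality check for pair maps.

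Fix $B\in\mathbb{B}$ and recall the structure maps of $JB=I_{\partial B}B$: the inclusions $i^{\pm}:B\to JB$ with $qi^{\pm}=\id_{B}$, the projection $q:JB\to B$, and the flip $r:JB\to JB$ induced by $t\mapsto 1-t$ on $I\times B$, which satisfies $ri^{+}=i^{-}$, $ri^{-}=i^{+}$, $qr=q$ and $rr=\id_{JB}$. All of these are morphisms of $\mathbb{B}$, so (the zero object $*$ of each $\cat{C}(B)$ being built into the notion of a $\mathbb{B}$-category) I set $\partial^{\pm}=(i^{\pm})^{*}$, $\varepsilon=q^{*}$, $\op=r^{*}$. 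For $\square$ I use the unions: $(JB,B^{+})$ and $(JB,B^{-})$ are ball pairs whose common part is a copy of $B$, so $JB\cup_{B}JB$ (gluing $B^{+}$ of the first copy to $B^{-}$ of the second) lies in $\mathbb{B}$ by the closure axioms, it is again a relative cylinder on $B$ rel $\partial B$, and I fix a homeomorphism of ball pairs $c:JB\xrightarrow{\ \approx\ }JB\cup_{B}JB$ which on $B^{-}$ and $B^{+}$ is the evident identification via $q$. For $(G,H)$ with $\partial^{+}H=\partial^{-}G$ I then put $G\square H=c^{*}(H\cup G)$, with $H\cup G$ the gluing functor of \cat{C} (so that $\partial^{-}(G\square H)=\partial^{-}H$ and $\partial^{+}(G\square H)=\partial^{+}G$ by the choice of $c$).

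The equations $\partial^{\pm}\varepsilon=\id$, $\partial^{+}\op=\partial^{-}$, $\op\varepsilon=\varepsilon$ and $\op\op=\id$ now follow at once by contravariance of $\cat{C}$ from $qi^{\pm}=\id$, $ri^{+}=i^{-}$, $qr=q$ and $rr=\id$. The one equation requiring real work is $\op(G\square H)=H^{\op}\square G^{\op}$: both sides unwind to pullbacks of the glued map $H\cup G$ along self‑maps of a copy of $JB$, and the identity reduces to the geometric fact that, under $c$, the flip $r$ of $JB$ corresponds to the self‑homeomorphism of $JB\cup_{B}JB$ which interchanges the two glued copies and flips each — this is handled by the naturality of the gluing functor together with $ri^{\pm}=i^{\mp}$. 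I expect this bookkeeping to be the main obstacle; it is the same phenomenon that makes pasting of homotopies anticommute with taking opposites. With these equations established, $(\cat{C}(B),\cat{C}(JB))$ is a deformation category, and $\cat{C}[B]=\cat{C}(B)\modu$ exists by the lemma on deformation categories.

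Finally, a pair map $f:(B,\partial B)\to(A,\partial A)$ induces $Jf:JB\to JA$, $(t,x)\mapsto(t,f(x))$ — well defined because $f(\partial B)\subset\partial A$ — and one checks $Jf\circ i^{\pm}_{B}=i^{\pm}_{A}\circ f$, $q_{A}\circ Jf=f\circ q_{B}$ and $r_{A}\circ Jf=Jf\circ r_{B}$, together with compatibility of $Jf$ with the gluing data $c$; hence $\partial^{\pm}_{B}\circ(Jf)^{*}=f^{*}\circ\partial^{\pm}_{A}$ and, more, $(f^{*},(Jf)^{*})$ is a morphism of deformation categories $(\cat{C}(A),\cat{C}(JA))\to(\cat{C}(B),\cat{C}(JB))$. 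In particular, if $H:\phi\simeq\psi$ in $\cat{C}(A)$ then $(Jf)^{*}H:f^{*}\phi\simeq f^{*}\psi$, so $f^{*}$ carries $\simeq$ on $\cat{C}(A)$ into $\simeq$ on $\cat{C}(B)$ and descends to $f^{*}:\cat{C}[A]\to\cat{C}[B]$; functoriality in $f$ is inherited from $\cat{C}$. It remains to check that this descends to $\mathbb{B}_{\partial}\modu$: if $f\simeq g\rel\partial B$ then by definition there is a map $K:JB\to A$ with $Ki^{-}=f$ and $Ki^{+}=g$, and for any morphism $\phi$ of $\cat{C}(A)$ the $JB$‑map $K^{*}\phi$ satisfies $\partial^{-}(K^{*}\phi)=f^{*}\phi$ and $\partial^{+}(K^{*}\phi)=g^{*}\phi$, whence $f^{*}\phi\simeq g^{*}\phi$ and $f^{*}=g^{*}$ on homotopy categories. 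This produces the $(\mathbb{B}_{\partial}\modu)$-category $\cat{C}[_-]$.
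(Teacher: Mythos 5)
Your proof is correct and follows the same approach as the paper: you induce $\varepsilon,\op,\partial^{\pm}$ by pullback along the structure maps $q,r,i^{\pm}$ of the relative cylinder in $\mathbb{B}$, and you obtain $\square$ by pulling back the gluing functor along a map $JB\to JB\cup_{B}JB$, exactly as the paper does with its $\square'$. You spell out in more detail the verification of the deformation-category equations and the descent to a $(\mathbb{B}_{\partial}\modu)$-category via $Jf$ and the rel-$\partial$ homotopy $K$, which the paper's terse proof leaves implicit.
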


\begin{proof}
	The maps $\varepsilon':JB\to B, i^{\pm}:B\to JB, \op':JB\to JB, \square':JB\to JB\cup JB$ are maps in $\mathbb{B}$. They
	induce $\varepsilon=(\varepsilon')^*,\ \op=(\op')^*,\ \partial^{\pm}=(i^{\pm})^*$. Moreover $\square'$ yields $\square$ by the
	union property, that is 
	\begin{equation*}
	(\square')^*(F\cup G)=F\sq G.
	\end{equation*}
\end{proof}

Since $i^+i_A=i^-i_A$ we have $\partial_A\partial^+=\partial_A\partial^-$ and therefore there is a commutative diagram
\begin{equation*}
	\xymatrix{
	\cat{C}(B)\ar[rd]_{\partial_A}\ar[rr]^q	&	&\cat{C}[B]\ar[ld]^{\partial_A}\\
	&	\cat{C}(A)	&
	}
\end{equation*}
for each ball pair $(B,A)$. Here $q$ is the quotient map.

The topological $\mathbb{B}$-category \cat{C} has the property termed \emph{quotient property} that there is also a well 
defined gluing functor $\cup$ on homotopy categories $\cat{C}[B]=\cat{B}\modu$, that is, the following diagram commutes 
where $q$ is the quotient functor.
\begin{equation*}
	\xymatrix{
	\cat{C}(B)\times_{\cat{C}(A)}\cat{C}(B')\ar[rr]^{\cup}\ar[dd]^{q\times q}	&	&	\cat{C}(B\cup_A B')\ar[dd]^q \\
	\\
	\cat{C}[B]\times_{\cat{C}(A)}\cat{C}[B']\ar[rr]^{\cup}&	&\cat{C}[B\cup_AB']
	}
\end{equation*}

\begin{defn}
	A \emph{$\mathbb{B}$-category with unions} is a $\mathbb{B}$-category \cat{C} with a gluing functor, which
	is natural and satisfies the union rule and the quotient property.
\end{defn}

The topological $\mathbb{B}$-category is an example of a $\mathbb{B}$-category with unions.

\begin{remark}
	The quotient property can not be deduced from the properties of an abstract $\mathbb{B}$-category since
	$JJB\cup_B JJB$ is not a ball.
\end{remark}

\begin{lem}\label{BJBDefoCatLem}
	Let \cat{C} be a $\mathbb{B}$-category with unions. Then for $B\in\mathbb{B}$ the pair $(\cat{C}(B),\cat{C}[JB])$ is a
	deformation category which is a quotient of the deformation category $( \cat{C}(B),\cat{C}(JB) )$. Moreover for objects
	$X,Y$ the morphism sets
	\begin{equation*}
		(\cat{C}(B)(X,Y),\cat{C}[JB](X,Y))
	\end{equation*}
	form a groupoid.
\end{lem}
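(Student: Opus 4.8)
The plan is to build everything on top of the already–proved Lemma \ref{BJBDefoCatLem}'s predecessor: the pair $(\cat{C}(B),\cat{C}(JB))$ is a deformation category with structure $\varepsilon,\op,\square$, and $\cat{C}[B]=\cat{C}(B)\modu$ is defined, with $\simeq$ the natural equivalence relation coming from $\cat{C}(JB)$. First I would pass to the quotient: since $(\cat{C}_0,\cat{C}_1)=(\cat{C}(B),\cat{C}(JB))$ is a deformation category, I want to check that $(\cat{C}(B),\cat{C}[JB])$ is again a deformation category, where $\cat{C}[JB]=\cat{C}(JB)\modu$. The point is that the structure maps $\varepsilon=(\varepsilon')^*$, $\op=(\op')^*$, $\partial^{\pm}=(i^{\pm})^*$ and $\square=(\square')^*\circ(\cup)$ are all induced by maps in $\mathbb{B}$ (together with the gluing functor), and by the quotient property of a $\mathbb{B}$-category with unions each of these descends to homotopy categories. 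So I would first argue that $\varepsilon,\op,\partial^{\pm}$ all factor through the quotient $q:\cat{C}(JB)\to\cat{C}[JB]$ — for $\partial^{\pm}$ this is exactly the commuting triangle with $\partial_A$ displayed before Definition of $\mathbb{B}$-category with unions (with $A=B^{\pm}$), and for $\varepsilon$ and $\op$ it is the naturality of the quotient functor in the $\mathbb{B}$-direction. For $\square$ I would invoke the quotient-property square: $\cup$ is compatible with the quotient functors, and $(\square')^*$ descends because $\square'$ is a map of balls; hence $\square$ is well defined on $\cat{C}[JB]\times_\partial\cat{C}[JB]$. The defining equations of a deformation category (the list $\partial^+\varepsilon=\partial^-\varepsilon=\id$, $\partial^+\op=\partial^-$, $\op\varepsilon=\varepsilon$, $\op\op=\id$, $\op(G\square H)=H^{\op}\square G^{\op}$) then hold in the quotient because they hold upstairs in $(\cat{C}(B),\cat{C}(JB))$ and $q$ is surjective on morphisms and the identity on objects. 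This gives the first assertion, and the phrase ``which is a quotient of the deformation category $(\cat{C}(B),\cat{C}(JB))$'' is then automatic: $(\id,q)$ is a morphism of deformation categories.

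For the groupoid claim I would invoke Lemma \ref{DefoTraCatLem}: a deformation category is a track category iff the four equations $(*)$ hold — $H\square\varepsilon(f)=H$ for $f=\partial^-H$, $\op(H)\square H=\varepsilon(f)$, associativity of $\square$, and the exchange law $HH'=H\varepsilon(g')\square\varepsilon(f)H'$. None of these hold in the un-quotiented deformation category $(\cat{C}(B),\cat{C}(JB))$ — e.g. $H\square\varepsilon(f)$ is only homotopic rel boundary to $H$, not equal — but after quotienting by $\simeq$ (homotopy of $JB$-maps rel $\partial(JB)$, i.e.\ passing to $\cat{C}[JB]$) they do hold, by the standard reparametrisation homotopies of the globe $JB$. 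Concretely: the homeomorphism $J(J B)\approx JB\cup_B JB$ restricted appropriately gives a map of balls realising ``$H\square\varepsilon(f)\simeq H$'', ``$\op(H)\square H\simeq\varepsilon(f)$'', and the Godement/exchange relation; pulling back along these maps of balls and using that they become identities in $\mathbb{B}_\partial\modu$ yields the required equations in $\cat{C}[JB]$. I would not grind through each reparametrisation explicitly — I would state that these are the classical homotopies making $(\cat{C}(\pt),\cat{C}(I)\modu)$ a track category (cf.\ the Example with $B=\pt$, $A=\emptyset$), and that exactly the same maps of balls, now taken in $J(B)$ rather than $I$, do the job for a general $B\in\mathbb{B}$, using only that $JB$, $J(JB)$, $JB\cup_B JB$ lie in $\mathbb{B}$ (clauses (2),(3),(5) of the definition of $\mathbb{B}$). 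By Lemma \ref{DefoTraCatLem} the deformation category $(\cat{C}(B),\cat{C}[JB])$ is therefore a track category, and ``track category'' means precisely that each $\mathrm{Hom}$-object $(\cat{C}(B)(X,Y),\cat{C}[JB](X,Y))$ is a groupoid (the $2$-cells rel boundary, composed by $\square$, with inverses given by $\op$ and identities by $\varepsilon$).

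The main obstacle is the exchange law $HH'=H\varepsilon(g')\square\varepsilon(f)H'$: this is the one equation that genuinely needs the union/gluing structure and the quotient property together, not just a single reparametrisation of one globe. Here I would observe that horizontal composition of $JB$-maps is composition in the $\mathbb{B}$-category $\cat{C}$ (Definition \ref{CatDef}), while vertical composition $\square$ and the insertions $H\varepsilon(g')$, $\varepsilon(f)H'$ are built from $\square'$ and from the functoriality of composition in $\cat{C}$ along maps of balls; the identity then becomes an identity between two $JB\cup_B JB$-maps that agree on the boundary and are homotopic rel boundary (via the Godement interchange homotopy, which is again a pullback along a map of balls that is trivial in $\mathbb{B}_\partial\modu$). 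Since we have already passed to $\cat{C}[JB]$, such a rel-boundary homotopy forces equality. I expect the write-up of this last point to be the only place where one must be careful to keep track of which ambient ball each map lives over and to cite the quotient property (Remark: $JJB\cup_B JJB$ is not a ball, so this genuinely uses that $\cat{C}$ is a $\mathbb{B}$-category \emph{with unions}, not an abstract one). Everything else is bookkeeping: surjectivity of $q$ transports the deformation-category axioms, and Lemma \ref{DefoTraCatLem} packages the four extra equations into the groupoid conclusion.
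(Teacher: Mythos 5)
Your first half is on target: you note (correctly, as the paper does) that $\varepsilon,\op,\partial^{\pm}$ and $\square$ all descend to $\cat{C}[JB]$ via the quotient property and naturality, so $(\cat{C}(B),\cat{C}[JB])$ is a deformation category and $(\id,q)$ is a morphism of deformation categories. You also correctly see that the groupoid equations (identity, inverse, associativity of $\square$) fail in $\cat{C}(JB)$ but become equalities after quotienting, by producing explicit homotopies in $\cat{C}(JJB)$ pulled back along reparametrisations of balls — this is exactly the paper's thin-filler argument, which builds homotopies $\overline{\varepsilon},\overline{\op},\overline{\square}$ in $\cat{C}(JJB)$ from the diagram $JB\xrightarrow{\square'}JB\cup JB\xrightarrow{i^-\varepsilon'\cup 1}JB$ and the contractibility of $JB$.

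But you then overshoot, and that is where the gap lies. The statement only claims that the hom-sets $(\cat{C}(B)(X,Y),\cat{C}[JB](X,Y))$ are groupoids; this needs \emph{only the first three} equations of Lemma \ref{DefoTraCatLem}. You instead try to establish all four, including the exchange law $(*)$, $HH'=H\varepsilon(g')\sq\varepsilon(f)H'$, in order to conclude ``track category'' — and your summary ``track category means precisely that each Hom-object is a groupoid'' is not right: a track category is a groupoid-\emph{enriched} category, and it is precisely $(*)$ that encodes the enrichment beyond the bare hom-groupoids. This matters because $(*)$ is genuinely not provable from unions alone. In the paper it is handled in the \emph{next} lemma (section \ref{BDefCat}), under the stronger hypothesis of an abstract $\mathbb{B}$-deformation category, and the proof there builds the witnessing homotopy $R\in\cat{C}(JJB)$ by pulling back the single element $H\otimes E\in\cat{C}(JB\times JB)$ along two maps $\Delta$ and $(H'\cup E')\square'$ into $JB\times JB$ that agree on the boundary. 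Without the $\otimes$-product there is no element of $\cat{C}(JB\times JB)$ to pull back, and your appeal to ``the Godement interchange homotopy, which is again a pullback along a map of balls'' doesn't identify what is being pulled back along what; the interchange argument is not a reparametrisation of a single globe but an identification of two pullbacks of a \emph{product}-shaped datum, which is precisely the structure ``unions'' alone does not provide. You should drop $(*)$ entirely from this proof: the three groupoid equations suffice for the stated conclusion and are all that the available hypotheses can deliver.
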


For the proof below we use thin fillers obtained as follows. If $s,t:B\to B'$ are maps in $\mathbb{B}$ with $s|\partial
B=t|\partial B$ then there exists $T:JB\to B'$ with $\partial^+T=t,\ \partial^-T=s$. We can find $T$ since $B'$ is
contractible. Hence $T^*:\cat{C}(B')\to \cat{C}(JB)$ satisfies $\partial^-T^*=s^*,\ \partial^+T^*=t^*$. We call
$T^*(x):s^*(x)\simeq t^*(x)$ a \emph{thin filler} for the pair $(s^*(x),t^*(x))$.

\begin{proof}
	We have to show that the equations in Lemma \ref{DefoTraCatLem} are satisfied, that is, there exist elements
	$\overline{\varepsilon},\overline{\op},\overline{\square}$ in $\cat{C}(JJ(B))$ which are homotopies
	\begin{gather*}
		\overline{\varepsilon}:H\sq \varepsilon(f)\simeq H \quad \text{for }\partial^-H=f \text{ in }\cat{C}(B)\\
		\overline{\op}:\op(H)\sq H\simeq \varepsilon(f)\\
		\overline{\square}:(H\sq G)\sq F\simeq H\sq (G\sq F)
	\end{gather*}
	We obtain these homotopies by use of thin fillers. We consider the diagram
	\begin{equation*}
	\xymatrix{
	JB\ar[d]^{\square'}\ar[drr]^1&&\\
	JB\cup JB\ar[rr]_{i^-\varepsilon'\cup 1}&&JB
	}
	\end{equation*}
	Here $\square'$ induces $\square$ and $\varepsilon':JB\to B$ induces $\varepsilon$. Using a thin filler for the induced
	operators one gets $\overline{\varepsilon}$. In a similar way one gets $\overline{\op},\overline{\square}$.
\end{proof}

\section{$\mathbb{B}$-categories with $\otimes$-products}\label{BCatTen}
Let \cat{C} be the topological $\mathbb{B}$-category of $B$-maps in section \ref{IndSet}. Since products of balls are
defined in $\mathbb{B}$ we can define $\otimes$-products in $\mathbb{B}$-categories as follows.

For $B,B'\in \mathbb{B}$ and for a $B'$-map $g:Y\to Z$ and a $B$-map $f:X\to Y$ we have the $B'\times B$-map $g\otimes
f:X\to Z$ induced by the composite
\begin{equation*}
	B'\times B\times X\xrightarrow{B'\times f}B'\times Y\xrightarrow{g}Z.
\end{equation*}
This yields the $\otimes$-product in $\cat{C}$ which has properties as described in the next Lemmas.

\begin{lem}
	The operation $\otimes$ has the following properties:
	\begin{enumerate}
		\item $\otimes$ is associative.
		\item For $B$-maps $f:X\to Y,g:Y\to Z$ the composite $gf$ in $\cat{C}(B)$ satisfies
			\begin{equation*}
				\Delta^*(g\otimes f)=gf
			\end{equation*}
			where $\Delta^*:\cat{C}(B\times B)\to \cat{C}(B)$ is induced by the diagonal $B\to B\times B$.
		\item Let $0_{X,Y}^B$ be the trivial map $X\to *\to Y$ in $\cat{C}(B)$. Then we have for $(g:Y\to Z)\in \cat{C}(B'),
			(f:X\to Y)\in \cat{C}(B)$,
			\begin{align*}
				g\otimes 0_{X,Y}^B &= 0_{X,Z}^{B\times B'},\\
				0_{Y,Z}^{B'}\otimes f&=0_{X,Z}^{B\times B}.
			\end{align*}
	\end{enumerate}
\end{lem}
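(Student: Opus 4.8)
The plan is to verify all three items by directly unwinding the definition of $\otimes$ and of the induced functors $i^*$ into composites of maps of the form $B_k\times\cdots\times B_1\times X\to\cdots$, and then using only the strict associativity and functoriality of the cartesian product in $\cat{Top}$ together with the basepoint conditions built into the notion of a $B$-map. I expect no genuine obstacle here; the one thing requiring care is bookkeeping — keeping track of which cartesian factor carries each basepoint condition and which functor $i^*$ is being applied.

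For (1), I would write out $(h\otimes g)\otimes f$ and $h\otimes(g\otimes f)$ for a $B$-map $f\colon X\to Y$, a $B'$-map $g\colon Y\to Z$ and a $B''$-map $h\colon Z\to W$; expanding the definition of $\otimes$ twice in each case presents both as the single composite
\[
B''\times B'\times B\times X\xrightarrow{B''\times B'\times f}B''\times B'\times Y\xrightarrow{B''\times g}B''\times Z\xrightarrow{h}W,
\]
once one identifies $\id_{B''}\times(\id_{B'}\times f)$ with $\id_{B''\times B'}\times f$ and uses the strict associativity $B''\times(B'\times B)=(B''\times B')\times B$ of the product (up to the canonical associativity homeomorphism, which we suppress as is customary in the convenient category). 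Hence associativity of $\otimes$ holds on the nose.

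For (2), I would recall that for $i\colon A\to B$ in $\cat{Top}$ the functor $i^*$ sends a $B$-map $\varphi$ to $\varphi\circ(i\times X)$, apply this with $i=\Delta\colon B\to B\times B$ to $g\otimes f$ (both $f,g$ being $B$-maps), and insert the definition of $g\otimes f$; the result is the composite
\[
B\times X\xrightarrow{\Delta\times X}B\times B\times X\xrightarrow{B\times f}B\times Y\xrightarrow{g}Z,
\]
which is verbatim the composite defining $gf$ in $\cat{C}(B)$ in Definition~\ref{CatDef}. So $\Delta^*(g\otimes f)=gf$ is immediate.

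For (3), I would write $0^B_{X,Y}$ as $B\times X\to *\hookrightarrow Y$. For a $B'$-map $g\colon Y\to Z$, the map $B'\times 0^B_{X,Y}$ then lands in the basepoint section $B'\times\{*\}\subset B'\times Y$, and $g$ carries that section to $*\in Z$ precisely because $g$ is a $B'$-map; hence $g\otimes 0^B_{X,Y}=g\circ(B'\times 0^B_{X,Y})$ is the trivial $(B'\times B)$-map. For a $B$-map $f\colon X\to Y$, the composite $0^{B'}_{Y,Z}\circ(B'\times f)$ already factors through $*$ because its last map does, so $0^{B'}_{Y,Z}\otimes f$ is the trivial map as well; here the basepoint condition on $f$ is not even needed, which is the only asymmetry worth flagging. (I read the target decorations $B\times B'$ and $B\times B$ in the statement as $B'\times B$; a trivial map is determined by its source and target objects alone, so this is immaterial.)
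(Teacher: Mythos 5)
Your verification is correct. The paper states this lemma without proof (it follows ``This yields the $\otimes$-product in $\cat{C}$ which has properties as described in the next Lemmas''), so there is no argument in the paper to compare against; your direct unwinding of the definitions of $\otimes$, of composition in $\cat{C}(B)$, and of the induced functor $i^*$ is the intended content, and each of the three items checks out exactly as you say. Your observation that the superscripts $B\times B'$ and $B\times B$ in the paper's statement of (3) should both read $B'\times B$ (the indexing ball on which $g\otimes f$ is defined) is also correct, and, as you note, immaterial since a trivial map is determined by source and target.
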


The point is the final object in $\mathbb{B}$ and the unique map $\varepsilon_0':B\to \pt$ induces the functor
\begin{equation*}
	\varepsilon_0:\cat{C}(\pt)\to \cat{C}(B)
\end{equation*}
with $f^*\varepsilon_0=\varepsilon_0$ for all $f:B\to B'$ in $\mathbb{B}$.

\begin{lem}
	For $g:Y\to Z$ in $\cat{C}(B')$ and $f:X\to Y$ in $\cat{C}(B)$ we have $g\otimes f:X\to Z$ in $\cat{C}(B'\times B)$. If
	$B'$ is the point then $f\otimes g=f\varepsilon_0(g)$ in $\cat{C}(B)$ and if $B$ is the point then $f\otimes
	g=\varepsilon_0(f)g$ in $\cat{C}(B')$.
\end{lem}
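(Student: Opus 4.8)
The first assertion merely records the definition of $\otimes$ in the case where one of the two balls is the point: when $B'=\pt$ one has $B'\times B=B$ and when $B=\pt$ one has $B'\times B=B'$ (canonical identifications in $\mathbb{B}$), so that $g\otimes f$ is automatically a $B$-map, respectively a $B'$-map. Thus the substance of the Lemma is the two displayed identities, and the plan is to deduce them from item~(2) of the preceding Lemma, $\Delta^*(g\otimes f)=gf$, combined with the naturality of $\otimes$ along pair maps of balls, i.e. $(s\times t)^*(g\otimes f)=(s^*g)\otimes(t^*f)$ for $s:B_1\to B'$ and $t:B_2\to B$ in $\mathbb{B}$.

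Suppose first $B'=\pt$, so $g:Y\to Z$ lies in $\cat{C}(\pt)$. Apply naturality with $s=\varepsilon_0':B\to\pt$ and $t=\id_B$: since $\varepsilon_0=(\varepsilon_0')^*$ by definition this gives $(\varepsilon_0'\times\id_B)^*(g\otimes f)=\varepsilon_0(g)\otimes f$ in $\cat{C}(B\times B)$. Now precompose with the diagonal $\Delta:B\to B\times B$. On the one hand the composite $(\varepsilon_0'\times\id_B)\circ\Delta:B\to\pt\times B=B$ is the identity of $B$, so the left side returns $g\otimes f$, read as a $\pt\times B=B$-map. On the other hand item~(2) of the preceding Lemma gives $\Delta^*(\varepsilon_0(g)\otimes f)=\varepsilon_0(g)\,f$ in $\cat{C}(B)$, the composite of $f$ followed by $\varepsilon_0(g)$. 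Hence $g\otimes f=\varepsilon_0(g)\,f$. The case $B=\pt$ is dual: take $s=\id_{B'}$ and $t=\varepsilon_0':B'\to\pt$, so that $(\id_{B'}\times\varepsilon_0')^*(g\otimes f)=g\otimes\varepsilon_0(f)$ in $\cat{C}(B'\times B')$, precompose with $\Delta:B'\to B'\times B'$, use $(\id_{B'}\times\varepsilon_0')\circ\Delta=\id_{B'}$ on the left and item~(2) of the preceding Lemma on the right, and conclude $g\otimes f=g\,\varepsilon_0(f)$.

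Alternatively one may simply unwind the definitions: for $B'=\pt$ the $B$-map $g\otimes f$ is the composite $B\times X\xrightarrow{f}Y\xrightarrow{g}Z$ after deleting the trivial $\pt$-factor, while $\varepsilon_0(g)\,f$ in $\cat{C}(B)$ is $B\times X\xrightarrow{\Delta\times X}B\times B\times X\xrightarrow{B\times f}B\times Y\xrightarrow{\varepsilon_0(g)}Z$, and since $\varepsilon_0(g)$ factors as $B\times Y\to Y\xrightarrow{g}Z$ the naturality of the product projections collapses the second composite onto the first; the case $B=\pt$ is symmetric. There is no genuine obstacle here: the only points requiring care are the bookkeeping of the identifications $\pt\times(-)=(-)$ and $(-)\times\pt=(-)$ in $\mathbb{B}$, and the observation that the collapse maps $B\to\pt$ are precisely the maps inducing $\varepsilon_0$. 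The single ingredient not literally written down earlier is the naturality of $\otimes$ along pair maps, but this is immediate from the formula defining $g\otimes f$ via $B'\times B\times X\xrightarrow{B'\times f}B'\times Y\xrightarrow{g}Z$, by functoriality of $(-)\times X$ and of products.
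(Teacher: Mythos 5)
Your proof is correct. The paper states this Lemma without proof (as with the other Lemmas in Section~\ref{BCatTen}, it is left as an immediate consequence of the definition of $\otimes$), so there is no paper proof to compare against; your direct unwinding in the final paragraph is exactly the verification the paper implicitly delegates to the reader, and it checks out: when $B'=\pt$ the map $g\otimes f$ is the composite $B\times X\to Y\to Z$, while $\varepsilon_0(g)f$ unwinds to $B\times X\xrightarrow{\Delta\times X}B\times B\times X\xrightarrow{B\times f}B\times Y\to Y\xrightarrow{g}Z$, and the projection collapses the two coincide.

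Two minor remarks. First, your primary argument invokes the naturality formula $(g_1^*g)\otimes(f_1^*f)=(g_1\times f_1)^*(g\otimes f)$, which you describe as ``not literally written down earlier''; in fact the paper states it, but only in the Lemma \emph{after} the one you are proving, so relying on it is slightly out of order --- the direct unwinding avoids this and is preferable. Second, the Lemma as printed writes $f\otimes g=f\varepsilon_0(g)$ and $f\otimes g=\varepsilon_0(f)g$, which is inconsistent with the hypotheses $g:Y\to Z$, $f:X\to Y$ (only $g\otimes f$ is well-typed, and the only sensible composites are $\varepsilon_0(g)f$ resp.\ $g\varepsilon_0(f)$). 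You silently corrected this to $g\otimes f=\varepsilon_0(g)f$ and $g\otimes f=g\varepsilon_0(f)$, which is what must be meant; it would be worth flagging the typographical slip explicitly rather than proving a statement that differs notationally from the one posed.
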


Moreover we have the following \emph{naturality of $\otimes$-products}.

\begin{lem}
	For maps $g_1:B_1'\to B', f_1:B_1\to B$ in $\mathbb{B}$ one has
	\begin{equation*}
		(g_1^* g)\otimes (f_1^* f)=(f_1\times g_1)^*(g\otimes f)
	\end{equation*}
\end{lem}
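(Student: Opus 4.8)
The plan is to unwind both sides of the claimed identity directly from the definition of the $\otimes$-product as a composite of maps of balls, and then invoke the functoriality of $B\mapsto\cat{C}(B)$ together with the elementary naturality of products in $\mathbb{B}$. Recall that for a $B'$-map $g:Y\to Z$ and a $B$-map $f:X\to Y$, the map $g\otimes f:X\to Z$ in $\cat{C}(B'\times B)$ is by definition the $(B'\times B)$-map induced by the composite $B'\times B\times X\xrightarrow{B'\times f}B'\times Y\xrightarrow{g}Z$. So the two sides of $(g_1^*g)\otimes(f_1^*f)=(f_1\times g_1)^*(g\otimes f)$ are both concrete $(B_1'\times B_1)$-maps $X\to Z$, and it suffices to check they agree as maps $B_1'\times B_1\times X\to Z$.

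First I would expand the left-hand side. By the definition of $i^*$ on $B$-maps from Definition~\ref{CatDef} (a map $i:A\to B$ carries a $B$-map $f:A\times X\to Y$ to the $A$-map $f\circ(i\times X)$), we have $f_1^*f$ given by $X\times B_1\xrightarrow{B_1\times?}$… more precisely $(f_1\times X)$ followed by $f$, and similarly $g_1^*g$ is $g\circ(g_1\times Y)$. Plugging these into the defining composite for $\otimes$, the left-hand side $(g_1^*g)\otimes(f_1^*f)$ is the composite
\begin{equation*}
	B_1'\times B_1\times X\xrightarrow{B_1'\times(f_1^*f)}B_1'\times Y\xrightarrow{g_1^*g}Z,
\end{equation*}
which upon substituting the formulas for $f_1^*f$ and $g_1^*g$ becomes
\begin{equation*}
	B_1'\times B_1\times X\xrightarrow{B_1'\times f_1\times X}B_1'\times B\times X\xrightarrow{B_1'\times f}B_1'\times Y\xrightarrow{g_1\times Y}B'\times Y\xrightarrow{g}Z.
\end{equation*}
Second, I would expand the right-hand side. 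By definition $g\otimes f$ is $B'\times B\times X\xrightarrow{B'\times f}B'\times Y\xrightarrow{g}Z$, and applying $(f_1\times g_1)^*$ precomposes with $(f_1\times g_1)\times X:B_1'\times B_1\times X\to B'\times B\times X$, giving
\begin{equation*}
	B_1'\times B_1\times X\xrightarrow{(f_1\times g_1)\times X}B'\times B\times X\xrightarrow{B'\times f}B'\times Y\xrightarrow{g}Z.
\end{equation*}
The identity then reduces to checking that these two composites of maps of spaces coincide, which is a purely diagrammatic statement: it amounts to the interchange/naturality of the cartesian product, namely that the square formed by $f_1\times g_1$, $B_1'\times f_1\times X$ followed by $g_1\times(B\times X)$ etc., commutes, together with the naturality square expressing that $g_1\times Y$ commutes past $B'\times f$ in the appropriate sense (i.e. $(g_1\times Y)\circ(B_1'\times f)=(B'\times f)\circ(g_1\times B\times X)$, which holds because these act on disjoint factors).

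I do not expect a serious obstacle here; the only thing to be careful about is bookkeeping the order of factors and making sure the various $\times X$'s and identity maps are inserted in the right places, since $\otimes$ is written with the $B'$-factor first while composition in $\cat{C}(B)$ (Definition~\ref{CatDef}) threads things through a diagonal. The cleanest way to organize the verification is to note that both sides are, after expansion, the single composite
\begin{equation*}
	B_1'\times B_1\times X\xrightarrow{f_1\times f_1\cdots}\ \cdots\ \xrightarrow{g}Z
\end{equation*}
in which $f_1$ acts on the $B_1\to B$ coordinate, $f$ on the $B\times X\to Y$ part, $g_1$ on the $B_1'\to B'$ coordinate, and $g$ at the end; since $f_1,g_1$ act on independent coordinates, their order of application is immaterial, and this is exactly the content of the bifunctoriality of $\times$ on $\mathbb{B}$. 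Thus the main (and essentially only) step is this coordinate-wise comparison, after which the result follows, and one could equally phrase it as: $\otimes$ is a functor $\cat{C}(B')\times\cat{C}(B)\to\cat{C}(B'\times B)$ natural in the pair $(B',B)\in\mathbb{B}\times\mathbb{B}$ with respect to $(\text{-})^*$.
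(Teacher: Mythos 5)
The paper states this lemma without proof, treating it as a routine verification, and your argument supplies exactly the expected one: unwind both sides via the definitions of $i^*$ (Definition~\ref{CatDef}) and of $\otimes$, and observe that the resulting composites agree by the interchange law for $\times$ since $f_1$, $g_1$, $f$, $g$ act on independent factors. One small caution on bookkeeping: as written both the lemma and your expansion pair the map $f_1\times g_1:B_1\times B_1'\to B\times B'$ with a domain of shape $B_1'\times B_1\times X$, so the factors should really be taken as $g_1\times f_1$ to match the convention that $g\otimes f$ lives in $\cat{C}(B'\times B)$; this is a notational slip already present in the paper and does not affect the substance of your argument.
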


In particular, for all ball pairs $(B,A)$ the product $(B'\times B, B'\times A)$ is again a ball pair and we have
\begin{equation*}
	g\otimes \partial_A f=\partial_{B'\times A}(g\otimes f)
\end{equation*}

\begin{defn}
	An abstract \emph{$\mathbb{B}$-category \cat{C} with $\otimes$-products} is defined by $\otimes$-pro\-ducts which
	satisfy the properties in the Lemmata above.	
\end{defn}

\section{$\mathbb{B}$-deformation categories}\label{BDefCat}
Let \cat{C} be the topological $\mathbb{B}$-category. Then we have seen that \cat{C} has unions and $\otimes$-products.
Moreover the following formulas are satisfied.

Let $g:Y\to Z$ in $\cat{C}(B')$ and $f:X\to Y$ in $\cat{C}(B)$. Then $g\otimes f:X\to  Z$ in $\cat{C}(B\times B')$ is
defined. Now let $B=B_1\cup B_2$ or $B'=B_1'\cup B_2'$ be unions of ball pairs. Then for $f=f_1\cup f_2$ one has the
\emph{compatibility of $\otimes$ and $\cup$}
\begin{equation*}
	g\otimes (f_1\cup f_2)=(g\otimes f_1)\cup (g\otimes f_2)
\end{equation*}
and for $g=g_1\cup g_2$ one has
\begin{equation*}
	(g_1\cup g_2)\otimes f=(g_1\otimes f)\cup (g_2\otimes f)
\end{equation*}

\begin{defn}
	An abstract \emph{$\mathbb{B}$-deformation category} \cat{C} is a $\mathbb{B}$-category \cat{C} with unions $\cup$
	and $\otimes$-products such that these formulas on the compatibility of $\otimes$ and $\cup$ hold.
\end{defn}

The next Lemma extends the corresponding Lemma \ref{BJBDefoCatLem}.

\begin{lem}
	Let \cat{C} be an abstract $\mathbb{B}$-deformation category. Then for $B\in \mathbb{B}$ the deformation category
	$(\cat{C}(B),\cat{C}[JB])$ is a quotient of $(\cat{C}(B),\cat{C}(JB))$ and this quotient is a track category.
\end{lem}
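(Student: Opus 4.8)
The first assertion --- that $(\cat{C}(B),\cat{C}[JB])$ is a deformation category and a quotient of $(\cat{C}(B),\cat{C}(JB))$ --- is Lemma \ref{BJBDefoCatLem}, which already follows from the unions of \cat{C}; that lemma also gives that the hom-sets of $(\cat{C}(B),\cat{C}[JB])$ form groupoids, so the unit, inverse and associativity laws for $\sq$, i.e. the first three equations of Lemma \ref{DefoTraCatLem}, hold in $\cat{C}[JB]$ (equivalently they are witnessed by the thin fillers used in the proof of \ref{BJBDefoCatLem}). By Lemma \ref{DefoTraCatLem} it therefore remains to verify the horizontal-composition equation
\[
  HH' = (H\varepsilon(g'))\sq(\varepsilon(f)H'), \qquad f=\partial^-H,\ g'=\partial^+H',
\]
in $\cat{C}[JB]$, for tracks $H:f\simeq g$ between maps $X\to Y$ and $H':f'\simeq g'$ between maps $X'\to X$; here all operations are computed in $\cat{C}(JB)$ and descend. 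This is the only point at which the $\otimes$-product of a $\mathbb{B}$-deformation category is needed.

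The plan for $(*)$ is to realise both sides as pullbacks of the single $(JB\times JB)$-map $H\otimes H':X'\to Y$ along two maps $JB\to JB\times JB$ in $\mathbb{B}$. By the identity $\Delta^*(g\otimes f)=gf$ relating $\otimes$ to composition, applied with the ball $JB$ in place of $B$, precomposition with the diagonal $\Delta:JB\to JB\times JB$ gives $\Delta^*(H\otimes H')=HH'$. For the other side I would use the pasting map $\square':JB\to JB\cup_B JB$ together with a ``staircase'' $\sigma=(\sigma_1\cup\sigma_2)\circ\square'$, where $\sigma_1:JB\to JB\times JB$ has identity first coordinate and second coordinate $i^+\varepsilon'$ (so the $H'$-parameter is frozen at $B^+$) and $\sigma_2$ has first coordinate $i^-\varepsilon'$ and identity second coordinate (so the $H$-parameter is frozen at $B^-$); using $\varepsilon'i^{\pm}=\id_B$ one checks that $\sigma_1$ and $\sigma_2$ restrict to the common map $b\mapsto(i^-b,i^+b)$ on the gluing face $B$, so $\sigma$ is well-defined. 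Naturality of the gluing functor together with the union rule $(\square')^*(F\cup G)=F\sq G$ then give
\[
  \sigma^*(H\otimes H') = \sigma_1^*(H\otimes H')\sq\sigma_2^*(H\otimes H'),
\]
and a direct computation from the definitions of $\otimes$ and of composition in $\cat{C}(JB)$, using $\varepsilon=(\varepsilon')^*$ and $\partial^{\pm}=(i^{\pm})^*$, identifies $\sigma_1^*(H\otimes H')=H\varepsilon(g')$ and $\sigma_2^*(H\otimes H')=\varepsilon(f)H'$. Hence $\sigma^*(H\otimes H')=(H\varepsilon(g'))\sq(\varepsilon(f)H')$.

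Finally $\Delta$ and $\sigma$ agree on $\partial(JB)=B^-\cup B^+$, both restricting to $b\mapsto(i^{\pm}b,i^{\pm}b)$ on $B^{\pm}$. Since $JB\times JB$ is a ball and hence contractible, there is a thin filler $T:J(JB)\to JB\times JB$ with $\partial^-T=\Delta$ and $\partial^+T=\sigma$; then $T^*(H\otimes H')$ is a $J(JB)$-map from $\Delta^*(H\otimes H')=HH'$ to $\sigma^*(H\otimes H')=(H\varepsilon(g'))\sq(\varepsilon(f)H')$. So these two $JB$-maps coincide in $\cat{C}[JB]=\cat{C}(JB)\modu$, which is equation $(*)$; together with Lemma \ref{DefoTraCatLem} this shows that $(\cat{C}(B),\cat{C}[JB])$ is a track category.

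The delicate point I expect is the bookkeeping identity $\sigma^*(H\otimes H')=(H\varepsilon(g'))\sq(\varepsilon(f)H')$: one has to keep straight the order of the two $JB$-factors in $H\otimes H'$ (the first parametrizes $H$, the second $H'$), which copy of $JB$ in $JB\cup_B JB$ plays the role of the ``upper'' track in the pasting convention for $\sq$, and the compatibility of $\sigma_1$ and $\sigma_2$ on the gluing face so that $\sigma$ exists. Once this is set up, the rest is the geometric observation that the diagonal of the ball $JB\times JB$ and the two-edge staircase share a boundary and are joined by a filler, plus the already-established Lemmas \ref{BJBDefoCatLem} and \ref{DefoTraCatLem}.
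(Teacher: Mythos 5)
Your proof is correct and follows essentially the same route as the paper. You pull back the product $H\otimes H'$ (the paper's $H\otimes E$) along two maps $JB\to JB\times JB$ --- the diagonal $\Delta$ giving $HH'$, and the staircase $(\sigma_1\cup\sigma_2)\circ\square'$ giving $(H\varepsilon(g'))\sq(\varepsilon(f)H')$ --- and then use a thin filler to connect them; the paper's $H'=(1,i^+\varepsilon')$ and $E'=(i^-\varepsilon',1)$ are exactly your $\sigma_1$ and $\sigma_2$. You explicitly spell out the boundary compatibility of $\sigma_1,\sigma_2$ on the gluing face and of $\Delta$ and $\sigma$ on $\partial(JB)$, which the paper leaves implicit, but these are the same checks.
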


\begin{proof}
	We have to find $R$ in $\cat{C}(JJB)$ such that 
	\begin{equation*}
		R:HE\simeq (H\varepsilon(e'))\sq (\varepsilon(f)E)
	\end{equation*}
	where $H:f\simeq f',\ E:e\simeq e'$. Here $HE$ is the composite in the category $\cat{C}(JB)$. Now $R$ is
	obtained by the diagram
	\begin{equation*}
		\xymatrix{
		JB\ar[dr]^{\Delta}\ar[d]_{\square'}	&	\\
		JB\cup JB\ar[r]_{H'\cup E'}	& JB\times JB
		}
	\end{equation*}
	Here $\Delta$ is the diagonal and we define
	\begin{align*}
		E'=(i^-\varepsilon',1)&:JB\to JB\times JB,\\
		H'=(1,i^+\varepsilon')&:JB\to JB\times JB.
	\end{align*}
	Then $\Delta^*(H\otimes E)=HE$ and $( (H'\cup E')\square')^*(H\otimes E)=( H\varepsilon(e')
	)\sq (\varepsilon(f)E)$.
	Hence a thin filler yields $R$.
\end{proof}

Let \cat{C} be an abstract $\mathbb{B}$-deformation category and let $(B,A)$ be a ball pair in $\mathbb{B}$. Then we can
choose a map
\begin{equation*}
	\square'_A:B\to B\cup_A JA
\end{equation*}
which is the identity on the boundary. The union is the pushout of $i_A$ and $i^+$. We call $\square'_A$ the
\emph{action map} which induces the \emph{action functor}
\begin{align*}
	&\square_A:\cat{C}[B]\times_{\cat{C}(A)} \cat{C}[JA]\to \cat{C}[B]\\
	&\square_A(F,G)= F\sq_A G=(\square'_A)^*(F\cup G)
\end{align*}
Here $\square_A$ does not depend on the choice of $\square'_A$. For $B=JA$ the functor $\square_A=\square$ is given by the
track category $(\cat{C}(A),\cat{C}[JA])$ above.

\begin{lem}
	Let \cat{C} be an abstract $\mathbb{B}$-deformation category. Then $\square_A$ satisfies the equations
	\begin{align*}
		&F\sq_A \varepsilon(f)=F \quad\text{for }\partial_A F=f,\\
		&F\sq_A (G\sq H)=(F\sq_A G)\sq_A H,\\
		&\partial_{A_{\op}}(F\sq_A G)=\partial_{A_{\op}}F,\\
		&\partial_A(F\sq_A G)=\partial^-G.
	\end{align*}
	Moreover given $F\in\cat{C}[B]$ with $\partial_A F=f,\ \partial_{A_{\op}}F=g$, the group $\mathrm{Aut}_{\square}(f)$ in
	the track category $(\cat{C}(A),\cat{C}[JA])$ acts transitively and effectively on the set
	\begin{equation*}
		\{ F'\in \cat{C}[B], \partial_A F'=f, \partial_{A_{\op}}F'=g \}	
	\end{equation*}
	by $F'\sq_A \alpha$ for $\alpha\in\mathrm{Aut}_{\square}(f)$
\end{lem}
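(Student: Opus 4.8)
The plan is to derive all four identities and the two action statements from the cylinder maps in $\mathbb{B}$ together with the properties of $\otimes$, $\cup$, the compatibility formulas, the union rule, the quotient property, and thin fillers, exactly in the spirit of the proofs of Lemma~\ref{BJBDefoCatLem} and the preceding Lemma. Since everything is defined through the pullback of maps in $\mathbb{B}$, each equation in $\cat{C}[B]$ will come from exhibiting two maps $B\to B''$ (for the appropriate target ball $B''$) that agree on $\partial B$, hence become equal after passing to $\cat{C}[B]$ because the ambient deformation category $(\cat{C}(B''),\cat{C}[JB''])$ already has a thin filler between their pullbacks; one then invokes the quotient property to move the identity along $\cup$ and $\sq_A$. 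Thus the slogan is: reduce each statement to an equality of maps in $\mathbb{B}$ relative to the boundary, and let thin fillers do the rest.

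For the unit law $F\sq_A \varepsilon(f)=F$ with $\partial_A F=f$, I would consider the composite $B\xrightarrow{\square'_A} B\cup_A JA$ and compose with $1\cup_A (i^+\varepsilon'_A):B\cup_A JA\to B\cup_A A= B$, where $\varepsilon'_A:JA\to A$; this composite restricts to the identity on $\partial B$, and $(\square'_A)^*$ of $F\cup \varepsilon(f) = F\cup (i^+\varepsilon'_A)^*(\partial_A F)$ is $F\sq_A\varepsilon(f)$, so a thin filler in $\cat{C}(JJB)$ — which projects to an identity in $\cat{C}[B]$ — gives the claim. For associativity $F\sq_A(G\sq H)=(F\sq_A G)\sq_A H$ (here $G,H\in\cat{C}[JA]$ composable, so $G\sq H$ uses the track-category pasting of the previous Lemma), the two sides are both obtained from $B\to B\cup_A (JA\cup_{A}JA) = B\cup_A JA'$ — essentially the octahedral-type identity — by gluing $F$ with $G$ and $H$ along the two ways of bracketing; comparing the two composites of action/pasting maps in $\mathbb{B}$, they agree on $\partial B$, and a thin filler finishes it. The two boundary formulas $\partial_{A_{\op}}(F\sq_A G)=\partial_{A_{\op}}F$ and $\partial_A(F\sq_A G)=\partial^- G$ are immediate from the definition $\square'_A$ being the identity on the boundary together with the naturality $\partial_{B'\times A}$-type compatibilities: restricting $B\xrightarrow{\square'_A}B\cup_A JA$ to $A_{\op}\subset\partial B$ lands in the $B$-summand (so reads off $\partial_{A_{\op}}F$), while restricting to $A$ picks out the free end $i^-$ of $JA$, i.e. $\partial^- G$. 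These last two need no thin filler at all; they are honest equalities of maps in $\mathbb{B}$.

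For the final assertion — that $\mathrm{Aut}_{\square}(f)$ acts transitively and effectively on the fibre $S=\{F'\in\cat{C}[B]:\partial_A F'=f,\ \partial_{A_{\op}}F'=g\}$ via $F'\sq_A\alpha$ — I would argue as follows. Well-definedness (the action stays in $S$) is exactly the two boundary formulas just proved, since $\partial_A\alpha=\partial^+\alpha=f$; the unit law gives $F'\sq_A\varepsilon(f)=F'$; and a mild variant of associativity, $(F'\sq_A\alpha)\sq_A\beta=F'\sq_A(\alpha\sq\beta)$, shows it is a genuine group action (note $\alpha\sq\beta$ is the pasting in the track category $(\cat{C}(A),\cat{C}[JA])$, which is a group operation on $\mathrm{Aut}_{\square}(f)$). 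For \emph{effectiveness}: if $F'\sq_A\alpha=F'$ then applying $\partial^{-1}$-style reasoning — or rather, composing with an inverse action — reduces to $\alpha=\varepsilon(f)$; concretely one uses that $\square_A$ restricted to the case $B=JA$ is the track-category structure, which is free on automorphisms. For \emph{transitivity}: given $F',F''\in S$, both restrict to $f$ on $A$ and to $g$ on $A_{\op}$, hence the "difference" is a map $B\to$(target) which is trivial on all of $\partial B$ — that is, it factors as a map on the quotient $B/\partial B\approx S^{\dim B}$, which is $\Sigma^{\dim A+1}$ of a point-ball, and unwinding $\cat{C}[B]$ this difference \emph{is} an element $\alpha\in\cat{C}[JA]$ with $\partial^+\alpha=\partial^-\alpha=f$, i.e. $\alpha\in\mathrm{Aut}_{\square}(f)$, with $F''=F'\sq_A\alpha$. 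The honest way to see this last point: the quotient map $q:\cat{C}(B)\to\cat{C}[B]$ together with the action map $\square'_A$ identifies $\cat{C}[B]$-classes with fixed $\partial_A,\partial_{A_{\op}}$ as a torsor under $\cat{C}[JA]$-classes with fixed $\partial^{\pm}=f$, because $(B,A_{\op})$ is also a ball pair and $JA$ is precisely the "collar" glued on.

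\textbf{The main obstacle} I anticipate is the transitivity–torsor claim: asserting that $S$ is nonempty-or-torsor under $\mathrm{Aut}_{\square}(f)$ requires comparing two elements $F',F''\in\cat{C}[B]$ and producing a single connecting $\alpha$, which is not a formal consequence of the deformation-category axioms but uses that in the \emph{topological} model (and hence in any abstract $\mathbb{B}$-deformation category, via the quotient property) the relative homotopies filling $(B,A_{\op})$ rel $A_{\op}$ with prescribed value $f$ on $A$ are classified by $\cat{C}(JA)$ rel $\partial(JA)$ — essentially the statement that $B$ deformation-retracts onto $A_{\op}$ through maps fixing $A$, so a thin filler $T:JB\to B$ with one end the identity and the other a retraction onto $A_{\op}\cup$(collar) realizes the comparison. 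Making this precise — i.e. choosing $T$ and checking $T^*$ carries $F'$ to something of the form $F''\sq_A(\text{correction})$ with the correction landing in $\mathrm{Aut}_{\square}(f)$ — is where the real work sits; the four displayed equations, by contrast, are direct thin-filler arguments modeled verbatim on the proofs already given in the excerpt.
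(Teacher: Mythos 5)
The paper states this Lemma with no proof at all --- it is immediately followed by the one-line gloss ``The Lemma describes the \emph{action property} of $\square_A$'' --- so there is no official argument to compare against; what follows is a review of your proposal on its own merits.

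Your treatment of the four displayed equations is correct and is exactly the thin-filler technique used in the paper's proofs of the adjacent Lemmas. For the unit law, the composite $(1\cup\varepsilon'_A)\circ\square'_A:B\to B$ agrees with $1_B$ on $\partial B$, and $(1\cup\varepsilon'_A)^*F = F\cup\varepsilon(f)$ by naturality of $\cup$, so a thin filler gives the identity in $\cat{C}[B]$. The two boundary formulas are, as you say, honest equalities of maps in $\mathbb{B}$: $\square'_A\circ i_{A_{\op}}$ is the inclusion $A_{\op}\hookrightarrow B\cup_A JA$, while $\square'_A\circ i_A$ is the inclusion of the free end $A^-\hookrightarrow JA\hookrightarrow B\cup_A JA$, so pulling back $F\cup G$ gives $\partial_{A_{\op}}F$ and $\partial^- G$ respectively. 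Associativity by comparing the two bracketings via a thin filler is likewise fine.

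The gap you flag is real, but your fallback ``in the \emph{topological} model (and hence in any abstract $\mathbb{B}$-deformation category, via the quotient property)'' is not a valid inference: the quotient property is an axiom of abstract $\mathbb{B}$-deformation categories, not a transfer principle from the topological example, and no property of the topological model propagates to an arbitrary abstract one by fiat. The correct abstract mechanism is already available and you circle around it without naming it precisely: by definition (section~\ref{IndSet}) a ball pair $(B,A)$ \emph{comes equipped with} a homeomorphism $\psi:(JA,A^-)\approx(B,A)$ extending the identity of $A$, and $\psi$ is an isomorphism in $\mathbb{B}$ respecting boundaries (sending $A^-$ to $A$ and $A^+$ to $A_{\op}$ up to the comparison map $h_A$). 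Hence $\psi^*:\cat{C}[B]\to\cat{C}[JA]$ is a bijection carrying $\partial_A,\partial_{A_{\op}}$ to $\partial^-,\partial^+$, and a thin-filler/naturality argument (of exactly the kind you use for the unit law) shows $\psi^*$ intertwines $\sq_A$ with the track-category pasting $\sq$. This reduces the action claim to the case $B=JA$, where it is precisely the groupoid property of $(\cat{C}(A)(X,Y),\cat{C}[JA](X,Y))$ established in Lemma~\ref{BJBDefoCatLem}: in a groupoid the hom-set $\{G:\partial^-G=f,\ \partial^+G=g\}$ is a torsor under $\mathrm{Aut}_{\square}(f)$ by $G\sq\alpha$, with transitivity witnessed by $\alpha=(F')^{\op}\sq F''$ and effectiveness by cancellation. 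So the ingredient you are missing is not topology but the ball-pair homeomorphism axiom of $\mathbb{B}$; with it the torsor claim is as formal as the four equations.
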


The Lemma describes the \emph{action property} of $\square_A$.

\section{Abelian $\mathbb{B}$-deformation categories}\label{AbeDef}

For all $B\in \mathbb{B}$ one has the \emph{projection functor}
\begin{equation*}
	\partial_0:\cat{C}(B)\to \cat{C}(\pt)\modu
\end{equation*}
which is induced by any map $f:\pt\to B$ and which by thin fillers is independent of this choice.

\begin{defn}
	An abstract $\mathbb{B}$-deformation category \cat{C} is \emph{pre-abelian} if the track categories
	\begin{equation*}
		(\cat{C}(B),\cat{C}[JB])
	\end{equation*}
	are abelian for all $B\in \mathbb{B}$ and if the associated natural systems $D_B$ are composites of the form
	\begin{equation*}
		D_B:F(\cat{C}(B)\modu)\xrightarrow{\partial_0}F(\cat{C}(\pt)\modu\xrightarrow{D^n}\cat{Ab}
	\end{equation*}
	Here $D^n$ with $n=\dim(B)$ depends only on $n$ and not on $B$.
\end{defn}

Now let \cat{C} be pre-abelian with natural systems $D^n, n\geq 1$. Then we have for a ball pair $(B,A)$ and $f:X\to Y$
in $\cat{C}(A)$ the group $\mathrm{Aut}_{\square}(f)$ in the track category $(\cat{C}(A),\cat{C}[JA])$ and we have the
isomorphism, $n=\dim(A)+1$, (see section \ref{TraCat})
\begin{equation*}
	\sigma_A=\sigma:D^n_{\partial_0 f}\cong \mathrm{Aut}_{\square}(f)
\end{equation*}
which is natural in $f$. Hence the $\square_A$-action of $\mathrm{Aut}_{\square}(f)$ on the set $\{ F\in
\cat{C}[B], \partial_A F=f\}$ yields an action of $\alpha\in D^n_{\partial_0 f}$ on this set by $F\sq_A \alpha =
F\sq_A \sigma(\alpha)$. Here we have $\partial_0f=\partial_0\partial_A F=\partial_0 F$.

\begin{defn}
	Let $\ori(B)$ be a manifold orientation of the topological manifold $B\in \mathbb{B}$. there are two such orientations,
	$\ori(B)$ and $\overline{\ori}(B)$. Now $\ori(B)$ yields a manifold orientation $\ori(\partial B)$ on the boundary $\partial
	B$. If $(B,A)$ is a ball pair $\ori(\partial B)$ yields by restriction a manifold orientation $\ori(A)$. Now the product
	$I\times A$, where $I$ is the oriented interval from 0 to 1, yields a manifold orientation $\ori(JA)$. If $\ori(B)\cup
	\ori(JA)$ define a manifold orientation of $B\cup_A JA$ we write $\varepsilon(B,A)=+1$ and $\varepsilon(B,A)=-1$
	otherwise.
\end{defn}

\begin{defn}
	Let \cat{C} be pre-abelian and consider for a ball pair $(B,A)$ and $F\in\cat{C}[B],\ \partial_A F=f$ the element
	\begin{equation*}
		F\sq_{\ori(A)}\alpha= F\sq_A(\varepsilon(B,A)\alpha)\in \cat{C}[B]
	\end{equation*}
	where $\alpha\in D^n_{\partial_0 F}$. We call \cat{C} an \emph{abelian} $\mathbb{B}$-deformation category if this
	element does not depend on the ball pair $(B,A)$ but only on $F, \ori(B)$ and $\alpha$. For the opposite orientation
	$\overline{\ori}(B)$ we have the formula
	\begin{equation*}
		F\sq_{\overline{\ori}(B)}\alpha=F\sq_{\ori(B)}(-\alpha)
	\end{equation*}
	Moreover the \emph{abelian union property} is satisfied, that is, for a union $\overline{B}=B_1\cup\ldots\cup B_k$ of
	balls and $\ori(B_i)$ induced by $\ori(\overline{B}),\ i=1,\ldots,k$, we have for $F_i\in\cat{C}[B_i]$ the formula
	\begin{equation*}
		F_1\cup\ldots\cup (F_i\sq_{\ori(B_i)}\alpha)\cup\ldots\cup F_k=(F_1\cup\ldots\cup F_k)\sq_{\ori(\overline{B})}\alpha
	\end{equation*}
	where $\partial_0(F_i)=\partial_0(F_1\cup\ldots\cup F_k)$.
\end{defn}

Let \cat{C} be the topological $\mathbb{B}$-category and let $\mathfrak{X}$ be a class of objects in $\cat{Top}^*$ and
let $W$ be a space in $\cat{Top}^*$. For each $B\in\mathbb{B}$ we obtain the over category
\begin{equation*}
	\cat{C}(B)\{ \mathfrak{X}\to W \}\subset \cat{C}(B)
\end{equation*}
and dually the under category $\cat{C}(B)\{ W\to \mathfrak{X} \}$. Then the collection of over categories
\begin{equation*}
	\cat{C}\{ \mathfrak{X}\to W \}=\{ \cat{C}(B)\{ \mathfrak{X}\to W \}, B\in\mathbb{B} \}
\end{equation*}
is again a $\mathbb{B}$-deformation category. The same holds for the under category $\cat{C}\{ W\to\mathfrak{X}
\}$.

\begin{prop}
	Let $\mathfrak{X}$ be a class of co-$H$-groups. Then $\cat{C}\{ \mathfrak{X}\to W \}$ is an abelian
	$\mathbb{B}$-deformation category with natural systems $D^n=D^n_{\Sigma}, n\geq 1$. Dually, if $\mathfrak{X}$ is a
	class of $H$-groups then $\cat{C}\{ W\to\mathfrak{X} \}$ is an abelian $\mathbb{B}$-deformation category
	with natural system $D^n=D^n_{\Omega}, n\geq 1$.
\end{prop}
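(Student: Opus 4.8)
The plan is to carry out the co-$H$-case; the $H$-case follows by the evident dualisation (interchange $\Sigma$ and $\Omega$, replace the partial suspension $E^{n}\nabla$ by the partial loop operation $L^{n}\nabla$, and over categories by under categories). Since the discussion preceding the statement already exhibits $\cat C\{\mathfrak X\to W\}$ as a $\mathbb B$-deformation category, what remains is to show it is pre-abelian with natural systems $D^{n}_{\Sigma}$ and that it satisfies the remaining axioms of an abelian $\mathbb B$-deformation category in \S\ref{AbeDef}.

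First I would establish pre-abelianness. Fix $B\in\mathbb B$ and a $B$-map $f\colon X\to Y$ in $\cat C(B)\{\mathfrak X\to W\}$. Unwinding $JB=I_{\partial B}B$ and the equivalence relation defining $\cat C[JB]$, a representative of a class in $\mathrm{Aut}_{\square}(f)$ is a map $I\times B\times X\to Y$ that agrees with $f$ on $(\{0,1\}\times B\cup I\times\partial B)\times X$ and is trivial on $I\times B\times *$, taken up to homotopy rel this frame; translating by the constant homotopy $\varepsilon(f)$ and collapsing the frame gives a natural isomorphism
\[
	\mathrm{Aut}_{\square}(f)\;\cong\;[\,(I/\partial I)\wedge(B/\partial B)\wedge X,\;Y\,]\;=\;[\Sigma^{n}X,Y],\qquad n=\dim B+1,
\]
which depends on $f$ only through its underlying homotopy class $\partial_{0}f$. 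For $n\ge 2$ this group is abelian automatically; for $n=1$, i.e.\ $B=\pt$, it is abelian by the Eckmann-Hilton argument applied to the co-$H$-group structure on $X$ (and, for $f=\id_{W}$, as the group of $2$-automorphisms of an identity $1$-cell), so the track categories $(\cat C(B)\{\mathfrak X\to W\},\cat C[JB]\{\mathfrak X\to W\})$ are abelian --- which reproves Example~\ref{AbTraCatEx2} in the case $B=\pt$. Via the isomorphism $\sigma$ of \S\ref{TraCat} the associated natural system $D_{B}$ therefore factors through the projection functor $\partial_{0}$, with a fibre $D^{n}$ depending only on $n=\dim B+1$.

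To identify $D^{n}$ with $D^{n}_{\Sigma}$ I would compute the two structure maps of $D_{B}$ under the isomorphism above; using the partial-suspension calculus of \cite{BOb} both are seen to depend only on the underlying classes $\partial_{0}a$, $\partial_{0}b$, $\partial_{0}f$, the push-forward $b_{*}$ reducing to post-composition $\alpha\mapsto(\partial_{0}b)\alpha$, and the pull-back $a^{*}$ --- the horizontal composite of a filler of $\varepsilon(f)$ with a $B$-map $a\colon X'\to X$, read off after collapsing the $I$- and $(B/\partial B)=S^{\dim B}$-directions --- reducing to the twisted formula $(\alpha,\partial_{0}f)(E^{n}\nabla(\partial_{0}a))$ with $n=\dim B+1$. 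For $B=\pt$ and $n=1$ this is exactly the Proposition above identifying the natural system of $\cat{Top}^{*}[[\mathfrak X\to W]]$ with $D^{1}_{\Sigma}$; for general $B$ the $\dim B$ additional suspension coordinates carried by $B/\partial B$ raise the partial-suspension index from $1$ to $n$. Hence $D^{n}=D^{n}_{\Sigma}$, so $\cat C\{\mathfrak X\to W\}$ is pre-abelian.

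It remains to verify the further axioms in the definition of an abelian $\mathbb B$-deformation category. For a ball pair $(B,A)$ one has $F\sq_{A}\beta=(\square'_{A})^{*}(F\cup\sigma_{A}(\beta))$, so the action of $\mathrm{Aut}_{\square}(\partial_{A}F)\cong D^{\dim B}_{\partial_{0}F}$ on $\{F'\in\cat C[B]:\partial_{A}F'=\partial_{A}F\}$ is realised by gluing onto the face $A$ a $(JA)$-collar carrying $\sigma_{A}(\beta)$; since this is the $\pi_{\dim B}$-action of the relevant mapping space and the co-$H$-group structure renders it homogeneous over $\partial B$, the element $F\sq_{\ori(A)}\alpha=F\sq_{A}(\varepsilon(B,A)\alpha)$ depends only on $F$, on the orientation class of $(B,\partial B)$ determined by $\ori(B)$, and on $\alpha$; reversing $\ori(B)$ reverses that class, which gives $F\sq_{\overline{\ori}(B)}\alpha=F\sq_{\ori(B)}(-\alpha)$, and the additivity of $\pi_{\dim B}$ together with the compatibility of $\otimes$ and $\cup$ yields the abelian union property. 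I expect the main work to lie in this orientation bookkeeping and, above all, in the partial-suspension computation of $a^{*}$; both ultimately reduce, through the naturality results of \cite{BOb}, to the one-dimensional computations already carried out for triple Toda brackets and in the Proposition on $\cat{Top}^{*}[[\mathfrak X\to W]]$.
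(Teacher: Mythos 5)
The paper itself offers no proof of this proposition --- the statement is simply asserted, with the one-dimensional case of the natural-system identification delegated to \cite{BJ} --- so there is no argument of the author's to compare yours against; I can only assess your plan on its own terms.

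Your outline is sensible and correctly reads the intended indexing ($n=\dim B+1$, which fixes the slip $n=\dim B$ in the paper's definition of pre-abelian), but two steps are stated too loosely to be taken as proved. First, the passage from $\mathrm{Aut}_{\square}(f)$ to $[\Sigma^{n}X,Y]$ is not accomplished by ``translating by the constant homotopy $\varepsilon(f)$ and collapsing the frame'': $\varepsilon(f)$ only gives a basepoint for the torsor of relative homotopy classes over $\pi_{n}(\mathrm{Map}_*(X,Y),f_{0})$ with $f_{0}=\partial_{0}f$, and identifying this with $\pi_{n}(\mathrm{Map}_*(X,Y),0)=[\Sigma^{n}X,Y]$ requires the $H$-group structure on $\mathrm{Map}_*(X,Y)$ furnished by the co-$H$-structure on $X$. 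That is available for $X\in\mathfrak{X}$, but not for $f=\id_{W}$, the one morphism out of $W$ in the over category, since $W$ carries no assumed co-$H$-structure. Your Eckmann--Hilton parenthetical shows $\mathrm{Aut}_{\square}(\id_{W})$ is abelian, but it does not identify it with $D^{n}_{\Sigma}(\id_{W})=[\Sigma^{n}W,W]$; either a separate argument for this case or a reduction showing it is harmless is needed (the cited \cite{BJ} presumably handles $n=1$, but your inductive step ``raising the partial-suspension index'' again relies on the $H$-group translation that is unavailable here). Second, ``the co-$H$-group structure renders it homogeneous over $\partial B$'' is not a checkable assertion. What must be shown is that the $\square_{A}$-action, read through $\sigma_{A}$ as the standard obstruction-theoretic clutching action of $[\Sigma^{\dim B}X,Y]$ on the set of fillers of a fixed boundary map $\partial B\to\mathrm{Map}_*(X,Y)$, depends only on an orientation of $(B,\partial B)$ and not on the chosen face $A$; that fact should be stated and then verified against the sign rule $F\sq_{\overline{\ori}(B)}\alpha=F\sq_{\ori(B)}(-\alpha)$ and the abelian union property, rather than invoked implicitly. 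The remainder of the plan --- the factorization of $D_{B}$ through $\partial_{0}$ and the partial-suspension computation of $a^{*}$ --- is plausible and is where, as you say, the real work lies.
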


Compare section \ref{TraCat} where $D^n_{\Sigma}$ and $D^n_{\Omega}$ are defined.

\section{Higher order track categories}\label{HigOrdTra}

Let $\cat{C}$ be a $\mathbb{B}$-deformation category. Then we define for $n\geq 1$ the collection of categories
$\cat{C}^n$, termed the $n$-\emph{truncation} of \cat{C}, by
\begin{equation*}
	\cat{C}^n(B)=
	\begin{cases} 
		\cat{C}(B)\modu=\cat{C}[B] & \text{if }\dim(B)=n,\\
		\cat{C}(B)& \text{if }\dim(B)<n
	\end{cases}
\end{equation*}
with $B\in\mathbb{B}$. Hence only in dimension $n$ we take the homotopy category.

The following definition of an ``$n$-th order track category'' is motivated by those properties of a
$\mathbb{B}$-deformation category which remain visible in the truncation $\cat{C}^n$. Moreover these properties should be
minimal to allow the definition of higher order Toda brackets and certain spectral sequences below (see section 
\ref{HigOrdCoh}), and they should be
fulfilled by algebraic examples given by chain complexes (see section \ref{TruChaAlg}).

Let $\mathbb{B}(n), n\geq 0$ be the subcategory of $\mathbb{B}$ consisting of balls $B$ with $\dim(B)\leq n$ and
which is generated by the maps:
\begin{itemize}
	\item ball pair inclusions $i_A:A\subset B$,
	\item projections $\epsilon':JA\to A$,
	\item trivial maps $\varepsilon'_0:B\to \pt$.
\end{itemize}
Let $\mathbb{B}_{\partial}(n), n \geq 0$, be the subcategory of $\mathbb{B}$ consisting of balls $B$ with
$\dim(B)\leq n$ and which is generated by the following maps:
\begin{itemize}
	\item The \emph{opposite map} $\op':JA\to JA$,
	\item For a ball pair $(B,A)$ we have the ball $B\cup_A JA$ by pushout of $i_A, i^+$. The boundary satisfies
		$\partial(B\cup_A JA)=A_{\op}\cup A^-=A_{\op}\cup A=\partial B$. Then there is a map
		\begin{equation*}
			\sq'_A:B\to B\cup_A JA\quad\text{in }\mathbb{B}_{\partial}(n)
		\end{equation*}
		which is the identity on the boundary. We call $\sq_A'$ an \emph{action map}. Also $1\cup\varepsilon:B\cup_A JA\to
		B$ is a map in $\mathbb{B}_{\partial}(n)$.
	\item There is a map $h_A:A\to A_{\op}$ which extends the identity on $\partial A_{\op}=\partial A$. This map is called a
		\emph{comparison map}.
\end{itemize}
All the maps of $\mathbb{B}_{\partial}(n)$ are isomorphisms in the homotopy category
$\mathbb{B}_{\partial}(n)\modu\subset \mathbb{B}_{\partial}\modu$.

As above we use only functors in $\cat{cat}^*(\calO)$ which are the identity on objects.

\begin{defn}
	An \emph{$n$-th order track category} \cat{K} is a $\mathbb{B}(n)$-category given by categories $\cat{K}(B)$ with zero
	object and functors $f^*:\cat{K}(B)\to \cat{K}(A)$ for $f:A\to B$ in $\mathbb{B}(n)$ such that the following
	properties hold:
	\begin{enumerate}
		\item The relation $\simeq$ on $\cat{K}(B)$, defined by
			\begin{equation*}
				f\simeq g\Leftrightarrow 
				\begin{cases}
					f=g & \text{if }\dim(B)=n,\\
					\exists F\in \cat{K}(JB) \text{ with }\partial^-F=f,\ \partial^+F=g&\text{if }\text{dim}(B)<n,
				\end{cases}
			\end{equation*}
			is a \emph{natural equivalence relation}. Here the functor $\partial^{\pm}=(i^{\pm})^*$ is induced by the ball pair
			inclusion $i^{\pm}:B\subset JB$. 

			Let $\cat{K}[B]=\cat{K}(B)\modu$ be the homotopy category with
			quotient functor $q:\cat{K}(B)\to \cat{K}[B]$. Then $\partial_A=i_A^*$ induces $\partial_A:\cat{K}[B]\to \cat{K}(A)$
			with $\partial_A q=\partial_A$ since $\partial_A \partial^-=\partial_A\partial^+$.

		\item The categories $\cat{K}[B]$ have the structure of a $\mathbb{B}_{\partial}(n)$-\emph{category} given by functors
			$f^*:\cat{K}[B]\to \cat{K}[A]$ for $f:A\to B$ in $\mathbb{B}_{\partial}(n)$. Moreover $f,g:A\to B$ in
			$\mathbb{B}_{\partial}(n)$ with $f|\partial A=g|\partial A$ and
			$f\simeq g\rel \partial A$ satisfy $f^*=g^*$ so that $\cat{K}[_-]$ is actually a $\left<
			\mathbb{B}_{\partial}(n)\modu\right>$-category. The induced functors $f^*$ are compatible with the restriction
			functors $\partial_A$ (as in a $\mathbb{B}$-deformation category).
		\item For ball pairs $(B,A), (B',A)$ in $\mathbb{B}(n)$ a \emph{gluing functor} $\cup$ is given for
			which the following diagram commutes
			\begin{equation*}
				\xymatrix{
				\cat{K}(B)\times_{\cat{K}(A)}\cat{K}(B')\ar[r]^-{\cup}\ar[d]^{q\times q}		&\cat{K}(B\cup_A B')\ar[d]^q\\
				\cat{K}[B]\times_{\cat{K}(A)}\cat{K}[B']\ar[r]^-{\cup}	&\cat{K}[B\cup_A B']
				}
			\end{equation*}
			The gluing functor is natural with respect to maps in $\mathbb{B}(n)$, resp. $\mathbb{B}_{\partial}(n)$, and the
			gluing rule is satisfied, see section \ref{BCatUni}
		\item For a ball pair $(B,A)$ we have the \emph{action functor}
			\begin{equation*}
				\square_A:\cat{K}[B]\times_{\cat{K}(A)}\cat{K}[JA]\to\cat{K}[B]
			\end{equation*}
			defined by $\square_A(G,F)=(\square_A')^*(G\cup F)$. For $B=JA$ the tuple
			\begin{equation*}
				\left( \cat{K}(A),\cat{K}[JA],\varepsilon=q^*(\varepsilon')^*,\op=(\op')^*,\square=\square_A,\partial^{\pm} \right)
			\end{equation*}
			is a \emph{track category} with associated homotopy category $\cat{K}[A]$. The action functor has the action
			property in section \ref{BDefCat}.
		\item Given a ball pair $(B,A)$ and $f$ in $\cat{K}(A)$. Then $f\simeq 0$ if and only if there exists $F$ in
			$\cat{K}(B)$ with $\partial_A F=f$ and $\partial_{A_{\op}} F=0$. This is the \emph{extension property} of \cat{K}.
		\item For $B'\times B$ in $\mathbb{B}(n)$ and $g:Y\to Z$ in $\cat{K}(B')$ and $f:X\to Y$ in $\cat{K}(B)$
			the $\otimes$-\emph{product}
			\begin{equation*}
				g\otimes f:X\to Y\text{ in }\cat{K}(B'\times B)
			\end{equation*}
			is given. We also write $q(g\otimes f)=g\otimes f$ in $\cat{K}[B\times B']$. The tensor product is associative and
			natural and $g\otimes 0=0,\ 0\otimes f=0$. Moreover for $\varepsilon_0=(\varepsilon_0')^*$ we have $f\otimes g =
			f\varepsilon_0(g)$ if $B'=\pt$ and $f\otimes g=\varepsilon_0(f)g$ if $B=\pt$. Also $g\otimes\partial_A
			f=\partial_{B'\times A}(g\otimes f),\ (\partial_{A'} g)\otimes f=\partial_{A'\times B}(g\otimes f)$ and $\otimes$ and
			$\cup$ are compatible.
		\item Let $\dim(B\times B_1)\leq n-1$ for balls $B,B_1$ in $\mathbb{B}$. We consider the boundary of the
			product $JB\times JB_1$ which is the following union of balls
			\begin{align*}
				\partial(JB\times JB_1)&=U_{\op}\cup U\\
				U&=B^+\times JB_1\cup JB\times B_1^+\\
				U_{\op}&=B^-\times JB_1\cup JB\times B_1^-
			\end{align*}
			and we use a comparison map $h_U:U\to U_{\op}$ in $\mathbb{B}_{\partial}(n)$. Let $G\in\cat{K}(JB),
			F\in\cat{K}(JB_1)$, then we have in $\cat{K}[U]$ the following \emph{boundary formula}, 
			\begin{equation}
				h_U^*\left( (\partial^-G)\otimes F\cup G\otimes (\partial^-F) \right)=(\partial^+G)\otimes F^{\op}\cup G^{\op}\otimes
				(\partial^+ F)\tag{$*$}
			\end{equation}
			For $\dim(B\times B_1)=0$ this corresponds to formula $(*)$ in a track category, compare the horizontal
			composition in section \ref{TraCat}.
	\end{enumerate}
\end{defn}

\begin{lem}
	Given a $\mathbb{B}$-deformation category \cat{C} one has for $n\geq 1$ an $n$-th order track category $\cat{C}^n$ by
	truncation of \cat{C}, see the definition of $\cat{C}^n(B)$ above. Similarly the \emph{truncation} $\cat{K}^{n-1}$ of
	an $n$-th order track category $\cat{K}$ is an $(n-1)$-st order track category.
\end{lem}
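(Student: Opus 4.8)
The axioms of an $n$-th order track category were abstracted from exactly the structure of the truncation $\cat{C}^n$ of a $\mathbb{B}$-deformation category, so the plan for the first assertion is to verify axioms (1)--(7) of Section \ref{HigOrdTra} against the structural lemmas of Sections \ref{TraCat}--\ref{BDefCat}, mostly by inspection. I first record the bookkeeping remark that $\cat{C}^n[B] := \cat{C}^n(B)\modu$ equals $\cat{C}[B] = \cat{C}(B)\modu$ for every $B$: if $\dim(B) < n$ this is the homotopy category already available, and if $\dim(B) = n$ the relation $\simeq$ on $\cat{C}^n(B) = \cat{C}[B]$ is equality. With this in hand, axiom (1) is the fact that the deformation category $(\cat{C}(B),\cat{C}(JB))$ yields a natural equivalence relation (Section \ref{TraCat} together with Section \ref{BCatUni}; in the top dimension one uses instead the quotient $(\cat{C}(B),\cat{C}[JB])$ of Lemma \ref{BJBDefoCatLem}, which defines the same relation); axiom (2) is the assertion that $\cat{C}[_-]$ is a $(\mathbb{B}_{\partial}\modu)$-category (Section \ref{BCatUni}), restricted along $\mathbb{B}_{\partial}(n)\modu\subset\mathbb{B}_{\partial}\modu$; axiom (3) is the defining structure of a $\mathbb{B}$-category with unions --- naturality, the gluing rule, and the quotient property --- which $\cat{C}$ has by hypothesis; axiom (4) repackages the action functor $\square_A(G,F) = (\square'_A)^*(G\cup F)$, the fact that $(\cat{C}(A),\cat{C}[JA])$ is a track category, and the action property, all from Section \ref{BDefCat}; and axiom (6) is the list of $\otimes$-properties of Sections \ref{BCatTen}--\ref{BDefCat}. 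In each of these cases the only thing to add is that the structure maps $f^*$, $\cup$, $\square_A$, $\otimes$ are compatible with the quotient functors $q$ in dimension $n$, which is already recorded in Sections \ref{BCatUni}--\ref{BDefCat}.

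Two axioms call for a genuine, if short, argument. For the extension property (5), a ball pair $(B,A)$ with $B\in\mathbb{B}(n)$ has $\dim(A) < n$, so $f\simeq 0$ in $\cat{C}^n(A) = \cat{C}(A)$ means there is a homotopy $f\simeq 0$ in $\cat{C}(JA)$; the defining homeomorphism of pairs $\phi\colon(J(A),A^-)\approx(B,A)$ --- the identity on $A$ and carrying $A^+$ onto $A_{\op}$ --- induces an isomorphism of categories $\phi^*\colon\cat{C}(B)\xrightarrow{\cong}\cat{C}(JA)$ with $\partial^-\phi^* = \partial_A$ (since $\phi i^- = i_A$), while $\phi i^+$ is a homeomorphism $A\to A_{\op}$ followed by the inclusion, so that $\partial^+\phi^*F = 0$ if and only if $\partial_{A_{\op}}F = 0$ (trivial maps being preserved by every $g^*$); hence $\phi^*$ carries $\{F\in\cat{C}(B): \partial_A F = f,\ \partial_{A_{\op}}F = 0\}$ bijectively onto the set of homotopies $f\simeq 0$ in $\cat{C}(JA)$, and these two sets are simultaneously empty or not. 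For the boundary formula (7) I would adapt the thin-filler argument used, in Section \ref{BDefCat}, to show that $(\cat{C}(B),\cat{C}[JB])$ is a track category. Given $G\in\cat{C}(JB)$, $F\in\cat{C}(JB_1)$, the compatibility of $\otimes$ with $\partial$ and $\cup$ identifies the restrictions of $G\otimes F\in\cat{C}(JB\times JB_1)$ to the two hemispheres of $\partial(JB\times JB_1) = U_{\op}\cup U$ as
\[
\partial_U(G\otimes F) = (\partial^+G)\otimes F\cup G\otimes(\partial^+F),\qquad
\partial_{U_{\op}}(G\otimes F) = (\partial^-G)\otimes F\cup G\otimes(\partial^-F).
\]
Since $JB\times JB_1$ is a ball with $U$ a hemisphere of its boundary sphere, $(JB\times JB_1,U)$ is a ball pair, so through a homeomorphism $J(U)\approx JB\times JB_1$ the map $G\otimes F$ becomes a homotopy in $\cat{C}(J(U))$ between $\partial_U(G\otimes F)$ and the pullback of $\partial_{U_{\op}}(G\otimes F)$ along the comparison map $h_U$; a thin filler then upgrades this to the equality $(*)$ in $\cat{C}[U]$, the opposite maps $F^{\op}, G^{\op}$ entering because $h_U$ reverses the cylinder direction on each factor. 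The one real obstacle is the bookkeeping needed to see that both sides of $(*)$ are genuinely the same $U$-map, i.e.\ that the right comparison and opposite maps occur; the case $\dim(B\times B_1) = 0$ corresponds, as remarked in Section \ref{HigOrdTra}, to the horizontal-composition identity $(*)$ of a track category, which is verified for $(\cat{C}(\pt),\cat{C}[I])$ in Section \ref{BDefCat} by exactly this argument.

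For the second assertion, $\cat{K}^{n-1}$ agrees with $\cat{K}$ on all balls of dimension $< n-1$ and replaces $\cat{K}(B)$ by $\cat{K}[B]$ for $\dim(B) = n-1$, and $\mathbb{B}(n-1)\subset\mathbb{B}(n)$, $\mathbb{B}_{\partial}(n-1)\subset\mathbb{B}_{\partial}(n)$. The axioms for an $(n-1)$-st order track category involve only balls of dimension $\le n-1$, and every piece of structure they require --- the functors $f^*$, the gluing and action functors, $\otimes$, $\op$, $h_U^*$, the track category $(\cat{K}(A),\cat{K}[JA])$ for $\dim(A)\le n-2$, and the boundary formula for $\dim(B\times B_1)\le n-2$ --- is part of the $n$-th order structure of $\cat{K}$ and already descends to the homotopy categories $\cat{K}[B]$. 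Hence each axiom for $\cat{K}^{n-1}$ is obtained by restricting the corresponding axiom for $\cat{K}$ and passing to the quotient in dimension $n-1$. The only point worth noting is axiom (1): on $\cat{K}^{n-1}(B)$ with $\dim(B) = n-1$ the relation $\simeq$ is equality, as demanded, which is consistent because $\cat{K}[B] = \cat{K}(B)\modu$ has already collapsed $\simeq$; and on $\cat{K}^{n-1}(B)$ with $\dim(B) = n-2$ the relation, now detected by $\cat{K}^{n-1}(JB) = \cat{K}[JB]$, coincides with that of $\cat{K}$, since the quotient $\cat{K}(JB)\to\cat{K}[JB]$ does not change which pairs of maps bound a $2$-cell. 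Therefore $\cat{K}^{n-1}$ is an $(n-1)$-st order track category.
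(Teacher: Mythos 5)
The paper states this lemma without proof, so I am assessing your argument on its own terms. Your strategy --- verify each of axioms (1)--(7) against the structural data of a $\mathbb{B}$-deformation category, observing that $\cat{C}^n[B]=\cat{C}[B]$ in every dimension --- is exactly the intended one, and most of the axioms do indeed hold by direct inspection of Sections~\ref{BCatUni}--\ref{BDefCat}. Your argument for the extension property~(5) is correct and cleanly stated: the ball-pair homeomorphism $\phi\colon(J(A),A^-)\to(B,A)$ is a morphism in $\mathbb{B}$ (hence induces an isomorphism $\phi^*$ in the abstract $\mathbb{B}$-category), $\phi i^-=i_A$ gives $\partial^-\phi^*=\partial_A$, and $\phi i^+$ factors through a homeomorphism $A\to A_{\op}$, so $\partial^+\phi^*F=0$ iff $\partial_{A_{\op}}F=0$. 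The second assertion (truncation of an $n$-th order track category) is also fine; the one point worth stressing, which you do note, is that the equivalence relation in dimension $n-2$ detected by $\cat{K}[JB]$ agrees with the one detected by $\cat{K}(JB)$, because $q$ is surjective and commutes with $\partial^{\pm}$.

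The genuine gap is in your treatment of the boundary formula~(7), and you partly flag it yourself. Your thin-filler construction produces the equality $\partial_U(G\otimes F)=h_U^*\bigl(\partial_{U_{\op}}(G\otimes F)\bigr)$ in $\cat{C}[U]$, i.e.
\[
(\partial^+G)\otimes F\,\cup\,G\otimes(\partial^+F)\;=\;h_U^*\bigl((\partial^-G)\otimes F\,\cup\,G\otimes(\partial^-F)\bigr)\quad\text{in }\cat{C}[U].
\]
This is \emph{not} the formula $(*)$ as stated, whose right-hand side is $(\partial^+G)\otimes F^{\op}\cup G^{\op}\otimes(\partial^+F)$. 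Moreover your explanation that the $\op$'s ``enter because $h_U$ reverses the cylinder direction on each factor'' does not account for the actual placement of the opposites: computing $h_U^*\bigl(\partial_{U_{\op}}(G\otimes F)\bigr)$ on the nose in the model case $\dim(B\times B_1)=0$ gives $G^{\op}\otimes(\partial^-F)\cup(\partial^-G)\otimes F^{\op}$, i.e.\ the $\op$'s attach to the $\partial^-$-terms, not the $\partial^+$-terms. And the right-hand side of $(*)$ as literally written does not even glue along $B^+\times B_1^+$: its two restrictions there are $(\partial^+G)\otimes(\partial^- F)$ and $(\partial^-G)\otimes(\partial^+F)$, which are different. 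So the ``bookkeeping'' you defer is exactly the content of axiom~(7); it needs either a careful specification of $h_U$ and of the gluing on the right-hand side of $(*)$, or an argument that your equality $\partial_U(G\otimes F)=h_U^*\partial_{U_{\op}}(G\otimes F)$ is an equivalent formulation of~$(*)$. As it stands, this step is not a proof but a statement of intent, and it is precisely the one axiom that generalizes the nontrivial interchange law of a track category.

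One small point of hygiene: you say ``a thin filler then upgrades this to the equality $(*)$.'' No thin filler is needed at this stage --- the homotopy $\phi^*(G\otimes F)\in\cat{C}(J(U))$ already \emph{is} the witness for the equality in $\cat{C}[U]$. What thin fillers would be needed for is the additional comparison, if any, between $\partial_U(G\otimes F)$ and the expression on the right of $(*)$; and that comparison is the part you have not supplied.
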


\begin{defn}
	A \emph{map $\alpha:\cat{K}\to \cat{K}'$ between $n$-th order track categories} is given by functors
	$\alpha:\cat{K}(B)\to \cat{K}'(B),\ \alpha:\cat{K}[B]\to\cat{K}'[B]$ with $q\alpha=\alpha q$. They are natural with
	respect to $\mathbb{B}(n)$ and $\mathbb{B}_{\partial}(n)$ respectively. Moreover $\alpha$ is compatible with gluing and
	$\otimes$-products. A map $\alpha$ is a \emph{weak equivalence} if $\alpha:\cat{K}[\pt]\to \cat{K}'[\pt]$ is an
	isomorphism and $\alpha$ induces isomorphisms of groups 
	\begin{equation*}
		\mathrm{Aut}_{\square}(f)\cong \mathrm{Aut}_{\square}(\alpha f)
	\end{equation*}
	for all $f$ in $\cat{K}(B),\ \dim(B)\leq n$.
\end{defn}

\begin{remark}
	There is also a notion of \emph{pseudo functor} and \emph{pseudo equivalence} between higher track categories, see
	\cite{BM} for $n=1$.
\end{remark}

Composites of inclusions of ball pairs yield for $B$ in $\mathbb{B}(n)$ maps $\{ \pt \}\to B$ in
$\mathbb{B}(n)$ which induce a well defined functor 
\begin{equation*}
	\partial_0:\cat{K}(B)\to\cat{K}(\pt)\modu=\cat{K}[\pt].
\end{equation*}

\begin{defn}
	An $n$-th order track category \cat{K} is \emph{abelian} if all track categories $(\cat{K}(B),\cat{K}[JB])$,
	$\dim(B)\leq n-1$, are abelian and the associated natural system $D^B$ is a composite of the form
	\begin{equation*}
		F(\cat{K}[B])\xrightarrow{\partial_0} F(\cat{K}[\pt])\xrightarrow{D^k}\cat{Ab}
	\end{equation*}
	where $D^k$ depends only on $k=\dim(B)$. Moreover the $\square_A$-action yields a well defined action
	$\square_{\ori(B)}$ which satisfies the abelian union property, see section \ref{AbeDef}.
\end{defn}

The truncation of the abelian $\mathbb{B}$-deformation categories in section \ref{AbeDef} yield examples of abelian
$n$-th order track categories.

\begin{ex}
	A first order track category \cat{K} is the same as a track category in section \ref{TraCat}. In fact, iterate
	composites of action maps yield a map $h:I\to [0,n]=I(n)$ inducing the isomorphism
	\begin{equation*}
		h^*:\cat{K}[I(n)]\approx\cat{K}(I(n))\xrightarrow{\approx}\cat{K}(I)\approx\cat{K}[I]
	\end{equation*}
	which does not depend on $h$. Now unions are defined by
	\begin{equation*}
		\xymatrix{
		\cat{K}(I(n))\times_{\partial}\cat{K}(I(m))\ar@2{-}[r]\ar[d]_{\cup}&\cat{K}(I)\times_{\partial}\cat{K}(I)\ar[d]^{\square}\\
		\cat{K}(I(n+m))\ar@2{-}[r] &\cat{K}(I)
		}
	\end{equation*}
	and $\otimes$-products are composites $\varepsilon(f)H, H\varepsilon(g)$ with $H$ in $\cat{K}(I)$ and $f,g$ in
	$\cat{K}(\pt)$. Of course \cat{K} is abelian iff the associated track category $(\cat{K}(\pt),\cat{K}[I])$ is abelian.
\end{ex}

\begin{remark}
	Let \cat{K} be an abelian $n$-th order track category. Then the truncation $\cat{K}^{n-1}$ is an abelian $(n-1)$-st
	order track category. We consider $\cat{K}$ as a \emph{linear track extension} of $\cat{K}^{n-1}$. The set of
	equivalence classes of such extensions is denoted by $H^{n+2}(\cat{K}^{n-1},D^n)$. This leads to a cohomology which
	for $n=1$ coincides with the cohomology in section \ref{TraCat}, \cite{BW}.
\end{remark}

\section{Higher order Toda brackets and higher order chain complexes}\label{HigOrdTod}

The properties of an abelian $n$-th order track category are chosen in such a way that it is possible to define higher
order Toda brackets generalizing the triple Toda brackets in a track category of section $\ref{TraCat}$.

\begin{prop}
	Let \cat{C} be an abelian $n$-th order track category with natural system $D^i, i=1,\ldots,n$ and let 
	\begin{equation*}
		Y=X_0\xleftarrow{\alpha_1}X_1\xleftarrow{\alpha_2}X_2\xleftarrow{}\cdots\xleftarrow{\alpha_{n+2}}X_{n+2}=X
	\end{equation*}
	be a sequence of morphisms $\alpha_i$ in $\cat{C}(\pt)\modu$. Then the higher Toda bracket is defined as a subset
	\begin{equation*}
		\left<\alpha_1,\ldots,\alpha_{n+2}\right>\subset D^n_{0(X,Y)}
	\end{equation*}
	%CAVE: Overfull
	For $n=1$ this is the triple Toda bracket in the abelian track category $(\cat{C}(\pt),\allowbreak\cat{C}(I)\modu)$. The set
	$\left<\alpha_1,\ldots,\alpha_{n+2}\right>$ is possibly empty. If $\cat{C}\sim\cat{K}$ is a weak equivalence then the
	brackets coincide, that is
	$\left<\alpha_1,\ldots,\alpha_{n+2}\right>_{\cat{C}}=\left<\alpha_1,\ldots,\alpha_{n+2}\right>_{\cat{K}}$.
\end{prop}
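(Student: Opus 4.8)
The plan is to construct the higher Toda bracket $\left<\alpha_1,\ldots,\alpha_{n+2}\right>$ by induction on $n$, using the structure of an abelian $n$-th order track category to mimic the classical construction of a Toda bracket out of "successive nullhomotopies". First I would fix representatives $f_i$ in $\cat{C}(\pt)$ of the classes $\alpha_i$ with $f_{i}f_{i+1}\simeq 0$ (possible since $\alpha_i\alpha_{i+1}=0$). For $n=1$ the element $c(H,G)=(H\varepsilon(g))\sq(\varepsilon(f)G^{\op})$ of section \ref{TraCat} already gives the triple bracket; the inductive step replaces single tracks $H,G$ in $\cat{C}(I)$ by elements $F_i$ in $\cat{C}(B_i)$ for suitable balls $B_i\in\mathbb{B}(n)$ of increasing dimension, assembled by the gluing functor $\cup$ and composed by the $\otimes$-product. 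Concretely, one chooses $F_i\in\cat{C}(B_i)$ with $\dim B_i=i$ whose restrictions $\partial_A F_i$ along appropriate ball-pair inclusions record the "partial composites" $f_j\cdots f_{j+i-1}$ and their chosen nullhomotopies, exactly as a higher chain-complex datum (the "higher order chain complex" of the section title). The extension property (5) is what lets us go from $f\simeq 0$ to an extension $F$ with $\partial_{A_{\op}}F=0$ at each stage, and the action property of $\square_A$ together with the isomorphism $\sigma_A:D^n_{\partial_0 f}\cong\mathrm{Aut}_\square(f)$ is what turns the set of top-dimensional fillers into a coset in $D^n_{0(X,Y)}$.

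The key steps, in order, are: (i) set up the combinatorial gadget — a union $\overline{B}$ of balls indexing the possible bracketings of $f_1\cdots f_{n+2}$ (the "staircase" or iterated-gluing diagram), with a distinguished ball pair $(\overline{B},A)$ where $A$ carries the trivial map $0(X,Y)$; (ii) show by downward induction using the extension property that a system of compatible fillers $F_1,\ldots,F_{n+1}$ extends, after gluing, to some $F\in\cat{C}[\overline{B}]$ with $\partial_A F=0(X,Y)$; (iii) read off the obstruction/indeterminacy: the possible values of $\partial_{A_{\op}}F$ (or of a companion restriction landing in $\mathrm{Aut}_\square(0(X,Y))$) form, via $\sigma$, a subset of $D^n_{0(X,Y)}$, and this subset is $\left<\alpha_1,\ldots,\alpha_{n+2}\right>$; (iv) check well-definedness — independence of the choices of representatives $f_i$ and of the intermediate fillers — using naturality of the gluing functor and $\otimes$-product and the abelian union property, which together imply that changing a lower filler changes the top datum by an element of $\alpha_{\cdot*}D^n + \cdot^*\alpha_{\cdot}$-type subgroups, so that the bracket is genuinely a subset (coset, when nonempty) of $D^n_{0(X,Y)}$; (v) verify naturality under weak equivalences $\cat{C}\sim\cat{K}$: a weak equivalence is the identity on $\cat{K}[\pt]$ and an isomorphism on all $\mathrm{Aut}_\square(f)$, hence carries the entire diagram of fillers in $\cat{C}$ to a diagram in $\cat{K}$ with the same restrictions in $\cat{C}[\pt]$ and the same indeterminacy groups, so $\left<\cdots\right>_{\cat{C}}=\left<\cdots\right>_{\cat{K}}$.

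The main obstacle I expect is step (ii)–(iii): organizing the compatibility conditions among the fillers $F_i$ so that the extension property can actually be applied at each dimension. The difficulty is bookkeeping the boundary formula $(*)$ from clause (7) of the definition — this is precisely the higher analogue of the horizontal-composition identity and is what guarantees that the $\otimes$-products of lower fillers glue along their common faces to give an admissible boundary for the next-dimensional filler. One has to exhibit the correct union-of-balls $\overline{B}$ (the $n$-dimensional generalization of the "square" witnessing $c(H,G)$) whose boundary decomposes, via iterated application of $(*)$ and the comparison maps $h_A$, $h_U$, into the data $(f_i, \text{nullhomotopies})$ on one part and the trivial map $0(X,Y)$ on $A$; the combinatorics of this ball, and the verification that each regular sequence of gluings gives the same result (the gluing rule), is the technical heart. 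Once that skeleton is in place, the passage to $D^n$ via $\sigma$ and the independence statements are formal consequences of the abelian union property and naturality. I would also remark that emptiness of the bracket corresponds exactly to the non-vanishing of a lower obstruction, i.e.\ to some $\left<\alpha_j,\ldots,\alpha_{j+k}\right>$ with $k<n$ failing to contain $0$, which is consistent with the stated possibility that $\left<\alpha_1,\ldots,\alpha_{n+2}\right>$ is empty.
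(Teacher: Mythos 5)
Your plan matches the paper's approach in outline and identifies the right ingredients (representatives $f_i$, successive nullhomotopies, gluing, $\otimes$-products, the extension property, and $\sigma$ to land in $D^n$), but the one thing you flag as "the technical heart" --- the precise combinatorial ball --- is where you are imprecise, and it is worth spelling out what the paper actually uses. The relevant ball is not an associahedron-type complex indexing \emph{all} bracketings of $f_1\cdots f_{n+2}$; it is the much simpler corner $T^n$, the union of the $n$-dimensional faces of $I^{n+1}$ that contain the origin. Its codimension-one faces are exactly the $I^r\times 0\times I^s$ with $r+s=n$, so the combinatorics is that of \emph{binary} splittings of the composite, not of general parenthesizations. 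The induction also runs upward, and there is a double index: one chooses $f_i^1\colon f_if_{i+1}\simeq 0$ in $\cat{C}(I)$; assuming $f_i^k\in\cat{C}(I^k)$ are chosen, one forms $F_i^k\in\cat{C}(T^k)$ by the gluing $F_i^k\vert I^r\times 0\times I^s=f_i^r\otimes f_{i+r+1}^s$ (with the degenerate cases $r=0$ and $s=0$ given by $\varepsilon_0(f_i)f_{i+1}^k$ and $f_i^k\varepsilon_0(f_{i+k+1})$), and then takes $f_i^{k+1}\in\cat{C}(I^{k+1})$ to be a cubical extension of $F_i^k$. The bracket is the set of all $\ob(F)$ for $F=F_0^n\in\cat{C}(T^n)$, where $\ob$ is defined via $0\sq_{\ori}\alpha=\{F\}$ using the $\square_{\ori}$-action, rather than by reading off a literal $\partial_{A_{\op}}$-restriction as you suggest. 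You are right that the consistency of the gluing along the common faces of the $I^r\times 0\times I^s$ is exactly what the compatibility of $\otimes$ with $\partial_A$ and the boundary formula $(*)$ provide, and you are right that emptiness corresponds to failure of a lower cubical extension to exist; your weak-equivalence argument is also the intended one. So the gap is not in strategy but in the concrete identification of $T^n$, the explicit face formulas, and the direction of the induction --- filling these in is what the paper does in lieu of a formal proof environment.
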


\begin{defn}\label{TrivBdDef}
	Let \cat{C} be an abelian $n$-th order track category. Let $B$ be a ball in $\mathbb{B}$ and let $F\in\cat{C}(B),\ 
	F:X\to Y$, be \emph{trivial on the boundary} (that is, for any ball pair $(B,A)$ we have $\partial_AF=0$ and
	$\partial_{A_{\op}}F=0$) then there is a unique element $\alpha\in D^i_{\partial_0 F},\ i=\dim(B)$, with 
	\begin{equation*}
		0\sq_{\ori(B)}\alpha=\{ F \} \text{ in }\cat{C}[B]
	\end{equation*}
	Here $\partial_{0}F=0(X,Y)$ in $\cat{C}[\pt]$. We call
	\begin{equation*}
		\ob(F)=\alpha\in D^i_{0(X,Y)}
	\end{equation*}
	the \emph{obstruction} associated to $F$. If $(E,B)$ is a ball pair the extension property shows that $\ob(F)=0$ if
	and only if there is $\overline{F}\in\cat{C}(E)$ with $\partial_B(\overline{F})=F$ and
	$\partial_{B_{\op}}(\overline{F})=0$.
\end{defn}

We now consider the ball pair $(I^{n+1},T^n)$ where $T^n$ is the union of all $n$-dimen\-sional faces of $I^{n+1}$ which
contain the origin $0=(0,\ldots,0)\in I^{n+1}$. Let $(I^{n+1},T^n_\op)$ be the opposite ball pair. Hence $T^n_\op$ is
the union of all faces of $I^{n+1}$ which contain the point $(1,\ldots,1)\in I^{n+1}$. We shall construct $F:X\to Y$ in
$\cat{C}(T^n)$ such that $F$ is trivial on the boundary and such that
\begin{equation*}
	\ob(F)\in\left<\alpha_1,\ldots,\alpha_{n+2}\right>
\end{equation*}
In fact, all possible choices of $F$ yield this way the set $\left<\alpha_1,\ldots,\alpha_{n+2}\right>$. If $F$ is not
constructable then this set is empty.

The element $\ob(F)$ is the zero element in the abelian group $D^n_{0(X,Y)}$ if and only if there exists $\overline{F}$
in $\cat{C}(I^{n+1})$ with $\overline{F}|T^n=F$ and $\overline{F}|T^n_\op=0(X,Y)\in\cat{C}(T^n_\op)$.

Here we call $\overline{F}$ a \emph{cubical extension} of $F$. We shall see that $F$ is constructible if and only if
inductively certain cubical extensions exist.

We start the induction by choosing representatives $f_i$ of the homotopy class $\alpha_i,\ i=1,\ldots,n+2$. Then we
choose
\begin{equation*}
	f_i^1\in\cat{C}(I),\quad f_i^1:f_if_{i+1}\simeq 0.
\end{equation*}
Assume now $f_i^k\in\cat{C}(I^k),\ 1\leq k<n,\ i\leq n-k+2$, are chosen. Then we define $f_i^{k+1}\in\cat{C}(I^{k+1})$
as follows, in fact, $f_i^{k+1}$ is a cubical extension of 
\begin{equation*}
	F_i^k\in\cat{C}(T^k)
\end{equation*}
where $F_i^k$ is obtained by the following gluing. The faces of $I^{k+1}$ in $T^k$ are of the form $I^k\times 0,\ 
I^{k-1}\times 0\times I,\ I^{k-2}\times 0\times I^2,\ldots,\ 0\times I^k$ and this is a regular sequence of balls in $T^k$.
Then $F_i^k$ is given by the restrictions
\begin{align*}
	&F_i^k|I^k\times 0= f_i^k \varepsilon_0(f_{i+k+1})\\
	&F_i^k|0\times I^k= \varepsilon_0(f_i)f_{i+1}^k\\
	&F_i^k|I^r\times 0\times I^k=f_i^r\otimes f_{i+r+1}^k,\quad r+s=k,\ 1\leq r<k.
\end{align*}

One can check that by the inductive construction this gluing, defining $F^k_i$, is well defined. Now let $F=F^n_0\in
\cat{C}(T^n)$. All possible choices of $F$ yield the set of elements $\ob(F)$ defining
$\left<\alpha_1,\ldots,\alpha_{n+2}\right>$.

We now consider \emph{higher order chain complexes}. Let \cat{C} be an $n$-th order track category and let
\begin{equation*}
	(X,\alpha)=\left( \cdots\xleftarrow{}X_{i-1}\xleftarrow{\alpha_i}X_i\xleftarrow{}\cdots,\ i\in\mathbb{Z} \right)
\end{equation*}
be a sequence of morphisms in the homotopy category $\cat{C}[\pt]$. A $\cat{C}$-\emph{chain complex $(X,f,F)$ associated
to $(X,\alpha)$} is defined in the same way as a representation of a higher Toda bracket. In fact, $(X,f,F)$ consists of
the following data. For $i\in\mathbb{Z}$ the element $f_i$ in $\cat{C}(\pt)$ is a representative of $\alpha_i$. Then
\begin{equation*}
	f^1_i\in\cat{C}(I),\quad f_i^1:f_i f_{i+1}\simeq 0
\end{equation*}
Assume now $f_i^k\in\cat{C}(I^k),\ 1\leq k<n$, is given. Then $f^{k+1}_i\in\cat{C}(I^{k+1})$ is a cubical extension of
$F^k_i\in\cat{C}(T^k)$ where $F_i^k$ is obtained by gluing as above, that is, $F^k_i$ has the restrictions
\begin{align*}
	&F_i^k|I^k\times 0= f_i^k \varepsilon_0(f_{i+k+1})\\
	&F_i^k|0\times I^k= \varepsilon_0(f_i)f_{i+1}^k\\
	&F_i^k|I^r\times 0\times I^k=f_i^r\otimes f_{i+r+1}^k,
\end{align*}
where $r+s=k,\ 1\leq r\leq k-1$.  Now $f^k_i$ and $F^k_i,\ 1\leq k\leq n$, describe $(X,f,F)$. This is a $\cat{C}$-chain
complex if the obstruction $\ob(F^n_i)=0$ vanishes for $i\in\mathbb{Z}$, see \ref{TrivBdDef}.

\begin{remark}
	For $n=1$ we have the track category $\cat{C}$ and in this case a $\cat{C}$-chain complex is the same as a
	$\emph{secondary chain complex}$ in \cite{BJSe}. Secondary chain complexes form a category but $\cat{C}$-chain
	complexes in general do not though morphisms between $\cat{C}$-chain complexes can be defined.
\end{remark}

\section{Higher order cohomology operations and the Adams spectral sequence}\label{HigOrdCoh}

Let $p$ be a prime and let $\mathbb{F}=\mathbb{Z}/p$ be the field of $p$ elements. Let $Z^n=K(\mathbb{F},n)$ be the
Eilenberg-MacLane space which is an $H$-group.

\begin{defn}
	The \emph{track theory of $n$-th order cohomology operations} is the abelian $n$-th order track category
	\begin{equation*}
		\cat{C}^n\{\calZ\}
\end{equation*}
where $\cat{C}$ is the topological $\mathbb{B}$-category and $\calZ$ is the set of all products
$Z^{n_1}\times\ldots\times Z^{n_r}$ with $n_1,\ldots,n_r\geq 1$ and $r\geq 1$. For a pointed space $X$ let
\begin{equation*}
	\cat{C}^n\{X\to\calZ\}
\end{equation*}
be the under category in section \ref{AbeDef} which is also an abelian $n$-th order track category. Here
$\cat{C}^n\{X\to\calZ\}$ is considered as a left module over $\cat{C}^n\{\calZ\}$.
\end{defn}

\begin{remark}
	The homotopy category $\cat{C}^n\{\calZ\}[\pt]$ is the theory of Eilenberg-MacLane spaces constructed in \cite[1.1.5]{BSe}.
	Models of this theory are connected unstable algebras over the Steenrod algebra $\calA$. For example 
	$\cat{C}\{X\to\calZ\}[\pt]$
	is such a model which is equivalently given by the cohomology $H^*(X,\mathbb{F})$. Given a sequence
	\begin{equation*}
		X\xrightarrow{\beta}X^0\xrightarrow{\alpha^0}X^1\xrightarrow{\alpha^1}\cdots\xrightarrow{\alpha^n}X^{n+1}
	\end{equation*}
	in $\cat{Top}^*\modu$ with $X^i\in\calZ$ the associated Toda bracket $\left<\alpha^n,\ldots,\alpha^0,\beta\right>$ is
	termed a \emph{higher matrix Massey product} in the $\calA$-module $H^*(X)$. If $X\in\calZ$ this is a higher Massey
	product in the Steenrod algebra.
\end{remark}

\begin{remark}
	It is possible to describe the analogue of the topological $\mathbb{B}$-de\-for\-ma\-tion category \cat{C} in
	$\cat{Top}^*$ in the stable homotopy category of spectra. For example the stable track theory of Eilenberg-MacLane
	spaces in \cite[2.2.6]{BSe} uses Eilenberg-MacLane spectra. The use of spectra, however, leads to technical
	complications which we want to avoid in this paper. We therefore use the ``stable range'' in the next definition.
\end{remark}

\begin{defn}
	For a ``large'' number $N$ let $\calZ_N$ be the subset of $\calZ$ above consisting of all products
	$Z^{n_1}\times\ldots\times Z^{n_r}$ with $N\leq n_i<2N,\ i=1,\ldots,r$. This is a \emph{stable range} of $\calZ$.
	Accordingly we get the \emph{stable theories}
	\begin{equation*}
		\cat{C}^n\{\calZ_N\},\quad\cat{C}^n\{\Sigma^NX\to \calZ_N\}
	\end{equation*}
	where $\Sigma^NX$ is the $N$-fold suspension of a $CW$-complex $X$ with $\dim(X)<N$.
\end{defn}

Let $X$ and $Y$ be finite $CW$-complexes. Then the \emph{Adams spectral sequence} $(E_n,n\geq 2)$ with
\begin{equation*}
	E_2=\calE xt_\calA(H^*X,H^*Y)
\end{equation*}
converges to the $p$-local part of the stable homotopy set $\{ Y,X \}$. For the computation of $\calE xt$ we
choose a resolution of the left $\calA$-modules $H^*$ by finitely generated free modules $M_i$
\begin{equation*}
	H^*X\leftarrow M_0\leftarrow M_1\leftarrow\cdots
\end{equation*}
Let $B_i$ be a basis of $M_i$ and let 
\begin{equation*}
	X^i=\bigtimes_{b\in B_i}\calZ^{N+|b|}
\end{equation*}
Then a finite part $H^*X\leftarrow M_0\leftarrow\cdots\leftarrow M_n$ of the resolution corresponds to a sequence
\begin{equation*}
	(X,\alpha)=\left(
	\Sigma^NX\xrightarrow{\alpha^{-1}}X^0\xrightarrow{\alpha^0}X^1\xrightarrow{\alpha^1}\cdots\xrightarrow{}X^m \right)
\end{equation*}
which for large $N$ lies in the homotopy category $\cat{K}[\pt]$ with $\cat{K}=\cat{C}^n\{W\to\calZ_N\},\ W=\Sigma^NX\vee
\Sigma^{N+s}Y$. Moreover an element $\overline{\beta}\in \mathrm{Ext}^r_\calA(H^*X,H^*Y)^s$ gets represented by a cocycle
$c\in\mathrm{Hom}(M_r,\Sigma^{-s}H^*Y)$ which corresponds to a map $\beta$ in the following diagram which for $N$ large lies in
$\cat{K}[\pt]$.
\begin{equation*}
	\xymatrix{
	\Sigma^NX\ar[r]	&	X^0\ar[r]^{\alpha^0}& \cdots
	\ar[r]&X^r\ar[r]^-{\alpha^r}&\cdots\ar[r]^-{\alpha^{r+n}}&X^{r+n+1},\ r+n+1\leq m,\\
	&&&\Sigma^{N+s}Y\ar[u]_{\beta}&
	}
\end{equation*}
We use this diagram for the determination of the differential
\begin{equation*}
	d_{n+1}:E_n^{r,s}\to E_n^{r+n+1,s+n}
\end{equation*}
\begin{prop}
	There is a \cat{K}-chain complex $(X,f,F)$ associated to $(X,\alpha)$ above. Moreover since $\overline{\beta}$
	represents an element in $E^{r,s}_n$ there is a $\cat{K}^{n-1}$-chain complex $(Y,g,G)$ associated to
	\begin{equation*}
		(Y,\beta)=\left( \Sigma^{n+s}Y\xrightarrow{\beta}X^r\xrightarrow{\alpha^r}\cdots\xrightarrow{}X^{r+n+1} \right).
	\end{equation*}
	Here the restriction of $(Y,g,G)$ to $(X^r\to\cdots\to X^{r+n+1}$ is given by $(X,f,F)$ and $\cat{K}^{n-1}$ is the
	truncation of \cat{K}. Since $(Y,\beta)$ is a $\cat{K}^{n-1}$-chain map we can choose a cubical extension defining
	$G^n_0\in\cat{K}(T^n)$. The obstruction $\ob(G_0^n)\in [ \Sigma^n(\Sigma^{N+s}Y), X^{r+n+1} ]$ yields an
	element in $\calE xt_\calA^{r+n+1}(H^*X,H^*Y)^{n+s}$ which represents $d_{n+1}\{ \beta \}\in
	E_n^{r+n-1,n+s}$.
\end{prop}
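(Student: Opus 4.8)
I would argue by induction on $n$, the base case $n=1$ being the secondary Adams differential of \cite{BJSe}, and split the proof into three stages: first produce the $\cat{K}$-chain complex $(X,f,F)$; then extend it over $\beta$ to the $\cat{K}^{n-1}$-chain complex $(Y,g,G)$ and form $G^n_0$; and finally identify $\ob(G^n_0)$ with $d_{n+1}\{\beta\}$. For the first stage one builds the data $f_i,f^1_i,\dots,f^n_i$ by induction on the level $k$, exactly as in the description of a $\cat{C}$-chain complex in section~\ref{HigOrdTod}: choose representatives $f_i$ of $\alpha_i$; since $M_\bullet\to H^*X$ is a complex of $\calA$-modules the composite $f_if_{i+1}$ is null in $\cat{K}[\pt]$, so $f^1_i\in\cat{K}(I)$ exists; and given $f^k_i$ one glues the prescribed faces to obtain $F^k_i\in\cat{K}(T^k)$, which is trivial on the boundary, so that by Definition~\ref{TrivBdDef} and the extension property a cubical extension $f^{k+1}_i$ exists precisely when $\ob(F^k_i)=0$. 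The point is that the resolution is by \emph{free} modules: freeness of $M_{i+k+2}$ --- equivalently, that the $X^j$ are products of Eilenberg--MacLane spaces --- turns the construction of $f^{k+1}_i$ into a lifting problem against free generators, and exactness of $M_\bullet$, together with a readjustment of the lower choices (which is permitted because for $k<n$ the class $\ob(F^k_i)$ obstructs only continuing, not being a chain complex), forces all $\ob(F^k_i)$ with $k<n$ and all the top obstructions $\ob(F^n_i)$ to vanish. This is the higher analogue of the existence of secondary chain complexes over free resolutions in \cite{BJSe}.

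For the second stage, the hypothesis that $\overline{\beta}$ survives to the page carrying $d_{n+1}$ means that all the differentials below $d_{n+1}$ vanish on it; by the inductive hypothesis (the cases below $n$ of the Proposition), applied to the truncations $\cat{K}^1,\dots,\cat{K}^{n-2}$, this is equivalent to the solvability of all coherence problems of level at most $n-1$ for the extended sequence $(Y,\beta)$, in particular $\ob(G^{n-1}_0)=0$. Hence, keeping $(X,f,F)$ fixed and choosing a representative $g$ of $\beta$, a track $g^1:g\alpha^r\simeq 0$, and inductively cubical extensions $g^k$ of the glued maps $G^{k-1}_\bullet$ for $k\le n-1$, one obtains a $\cat{K}^{n-1}$-chain complex $(Y,g,G)$ whose restriction to $X^r\to\dots\to X^{r+n+1}$ is $(X,f,F)$. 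Since $\ob(G^{n-1}_0)=0$ a level-$n$ filler $g^n_0$ exists; gluing $g^n_0$, the level-$n$ datum supplied by $(X,f,F)$, and the mixed $\otimes$-terms along the regular sequence of faces of $I^{n+1}$ contained in $T^n$ produces $G^n_0\in\cat{K}(T^n)$. A check with the gluing rule, the compatibility of $\otimes$ and $\cup$, and the boundary formula $(*)$ in the definition of an $n$-th order track category shows that $G^n_0$ is trivial on the boundary, so $\ob(G^n_0)\in[\Sigma^n(\Sigma^{N+s}Y),X^{r+n+1}]$ is defined.

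For the third stage, in the stable range $\calZ_N$ the splitting $X^{r+n+1}=\bigtimes_{b\in B_{r+n+1}}Z^{N+|b|}$ identifies $[\Sigma^n(\Sigma^{N+s}Y),X^{r+n+1}]$ with the internal degree $n+s$ part of $\mathrm{Hom}_\calA(M_{r+n+1},H^*Y)$, so $\ob(G^n_0)$ is a cochain of bidegree $(r+n+1,n+s)$ in the complex $\mathrm{Hom}_\calA(M_\bullet,H^*Y)$ computing $\mathrm{Ext}_\calA(H^*X,H^*Y)$. That this cochain is a cocycle follows from the boundary formula $(*)$ applied one dimension higher, using the coherence data of $(X,f,F)$ beyond $X^{r+n+1}$ (available when $r+n+2\le m$): one shows that $\alpha^{r+n+1}_*\,\ob(G^n_0)$ is the obstruction of a map glued from these data that is trivial on the boundary, hence is $0$. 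It then remains to check that the class $[\ob(G^n_0)]\in\mathrm{Ext}^{r+n+1}_\calA(H^*X,H^*Y)^{n+s}$ is independent of the choices of representatives and fillers modulo the images of the earlier differentials and the permanent cycles --- the indeterminacy of the $\square_{\ori}$-action and of the cubical extensions matching exactly the passage to the next page --- and that it coincides with the classical Adams differential $d_{n+1}$. I expect this last identification to be the main obstacle: it amounts to comparing the higher-track-categorical obstruction scheme in the stable range $\calZ_N$ with the classical inductive definition of $d_{n+1}$ via $n$-stage attaching maps in an Adams resolution, and, exactly as for $n=1$ in \cite{BJSe}, it is here that the full force of the boundary formula $(*)$, the action property of $\square_A$, and the abelian union property is needed.
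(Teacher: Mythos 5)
The paper does not give a proof of this Proposition for general $n$: the only remark following the statement is that the $n=1$ case is proved in \cite{BJ}, and the higher cases are left as an application of the machinery of sections \ref{HigOrdTra}--\ref{HigOrdTod}. So there is no argument in the paper to compare yours against, and I can only assess your outline on its internal consistency with that machinery.

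Your three-stage plan is consistent and the base case is right, but two steps are asserted rather than established, and they are the ones carrying the weight. First, in stage one you claim that freeness of the $M_i$, exactness of the resolution, and ``readjustment of lower choices'' force $\ob(F^k_i)=0$ for all $k\le n$; for $n=1$ this is precisely the existence theorem for secondary chain complexes over free resolutions in \cite{BJSe}, which is nontrivial, and nothing in sections \ref{HigOrdTra}--\ref{HigOrdTod} supplies an inductive higher-$n$ version. One must first identify the natural systems $D^k_{\Omega}$ in the stable range $\calZ_N$ with the appropriate groups $\mathrm{Hom}_{\calA}(M_\bullet,-)$ before the lifting-against-free-generators argument can run at all, and this identification is nowhere stated in the paper. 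Second, as you yourself flag, the crux is the identification of $\ob(G^n_0)$ with the Adams $d_{n+1}$, including the cocycle condition and the independence of all choices modulo the indeterminacy of the $\square_{\ori}$-action so that one lands in the $E_n$-page rather than merely in $E_2$. You correctly name the tools (the boundary formula $(*)$, the action property of $\square_A$, the abelian union property), and this is the same toolbox that the $n=1$ comparison in \cite{BJ} and \cite{BJSe} uses; but you do not carry out the comparison, and neither does the paper. What you have is a plausible roadmap for a proof the paper omits, not a proof; the two gaps above are where the substance would have to go.
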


This result is proved for $n=1$ in \cite{BJ}.

\begin{cor}
	Let $\cat{K}'$ be an $n$-th order track category quasi isomorphic to $\cat{K}=\cat{C}^n\{W\to \calZ_N\},\ W=\Sigma^NX\vee
	\Sigma^{N+s}Y$. Then the differential $d_n$ can be computed in $\cat{K}'$.
\end{cor}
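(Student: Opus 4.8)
The plan is to reduce the statement to the preceding Proposition, which already expresses the Adams differential (denoted $d_{n+1}$ there and $d_n$ here, the discrepancy being a reindexing convention) entirely in terms of the obstruction class $\ob(G^n_0)$ of a cubical extension of a $\cat{K}^{n-1}$-chain complex. Since this obstruction is an element of the natural system $D^n$ of the abelian $n$-th order track category, the task is to check that the whole apparatus --- the homotopy category $\cat{K}[\pt]$, the natural system $D^n$, $\cat{K}$-chain complexes, cubical extensions, and obstruction classes --- is invariant under the weak equivalences along a zig-zag $\cat{K}\leftarrow\cat{K}_1\to\cdots\to\cat{K}'$ witnessing ``quasi isomorphic''.

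First I would record what a weak equivalence $\phi\colon\cat{K}\to\cat{K}'$ buys us. By definition it is an isomorphism on $[\pt]$ and induces isomorphisms $\mathrm{Aut}_{\square}(f)\cong\mathrm{Aut}_{\square}(\phi f)$ on all track automorphism groups. In the abelian case $D^n$ is, via the isomorphism $\sigma$ of Section~\ref{TraCat}, the composite $F(\cat{K}[\pt])\xrightarrow{\partial_0}F(\cat{K}[\pt])\xrightarrow{D^n}\cat{Ab}$ assembled from exactly these $\mathrm{Aut}_{\square}$-groups; since $\phi$ commutes with $\partial_0$ and with the maps $a^{*},b_{*}$ of the natural system, it therefore induces an isomorphism of natural systems $D^n_{\cat{K}}\cong D^n_{\cat{K}'}$, and hence an isomorphism on the cohomology $H^{n+2}(\cat{K}^{n-1},D^n)$.

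Next I would transport the data of the previous Proposition across $\phi$. The resolution sequence $(X,\alpha)$ and the cocycle $\beta$ live in $\cat{K}[\pt]$, so via $\cat{K}[\pt]\cong\cat{K}'[\pt]$ they determine corresponding data in $\cat{K}'[\pt]$. A map of $n$-th order track categories is compatible with all the structure used to build a $\cat{K}$-chain complex --- the functors $f^{*}$ for $f$ in $\mathbb{B}(n)$, the gluing $\cup$, the $\otimes$-products, the action $\square_{\ori(B)}$, and the quotient functors $q$ --- so it carries the inductively chosen $f^k_i\in\cat{K}(I^k)$ and $F^k_i\in\cat{K}(T^k)$, in particular the cubical extension $G^n_0\in\cat{K}(T^n)$, to the analogous data in $\cat{K}'$. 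Because the obstruction is characterized by $0\sq_{\ori(B)}\ob(F)=\{F\}$ and $\phi$ respects $\square_{\ori(B)}$ and $\partial_0$, we get $\phi\cdot\ob_{\cat{K}}(G^n_0)=\ob_{\cat{K}'}(\phi G^n_0)$ under $D^n_{\cat{K}}\cong D^n_{\cat{K}'}$; conversely, since $\phi$ is a weak equivalence, every cubical extension available in $\cat{K}'$ is matched by one in $\cat{K}$ with the same obstruction value, so the set of obstruction values --- equivalently the higher Toda bracket $\langle\alpha^n,\dots,\beta\rangle$ with its indeterminacy --- is literally the same in $\cat{K}$ and in $\cat{K}'$. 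This last point is exactly the Toda-bracket Proposition, applied along each arrow of the zig-zag.

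The step I expect to be the main obstacle is showing that the identification of the abstract obstruction $\ob(G^n_0)\in D^n$ with an honest element of $\calE xt^{r+n+1}_\calA(H^*X,H^*Y)$ is itself natural under weak equivalences. This identification uses the specific model $\cat{K}=\cat{C}^n\{W\to\calZ_N\}$: the natural system $D^n$ there is computed from stable track groups $[\Sigma^n(-),-]$ of Eilenberg--MacLane objects and from the cohomology $H^{*}$, and one must check that the abstract isomorphism $D^n_{\cat{K}}\cong D^n_{\cat{K}'}$ is compatible with evaluation on the particular morphisms coming from the chosen free $\calA$-resolution, so that the resulting Ext-cocycle is unchanged. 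Here the full strength of the weak-equivalence hypothesis --- isomorphism on $[\pt]$ and on all $\mathrm{Aut}_{\square}$ --- is needed; once this naturality is in place, the conclusion that $d_{n+1}$, hence $d_n$, may be computed in $\cat{K}'$ follows from the previous Proposition, the remaining work being the routine bookkeeping of carrying the inductive cubical data along $\phi$.
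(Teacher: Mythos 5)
The paper offers no proof of this corollary at all --- it simply asserts the statement and cites \cite[5.1]{BJSe} for the case $n=1$ --- so there is no ``paper's own proof'' to compare against. That said, your outline is the one any proof along these lines would have to take: the preceding Proposition reduces the differential to an obstruction class $\ob(G^n_0)\in D^n$ coming from a cubical extension of a $\cat{K}^{n-1}$-chain complex, and the corollary is the assertion that this obstruction (up to the indeterminacy built into the Toda bracket) is a weak-equivalence invariant. Reading ``quasi isomorphic'' as a zig-zag of weak equivalences, appealing to the definition (isomorphism on $\cat{K}[\pt]$ and on $\mathrm{Aut}_{\square}$) to get a canonical isomorphism of natural systems $D^n_{\cat{K}}\cong D^n_{\cat{K}'}$, transporting the cubical data along each arrow, and closing the loop by citing the Toda-bracket invariance Proposition from section~\ref{HigOrdTod} --- all of that is the right shape of argument, and the discrepancy $d_{n+1}$ versus $d_n$ you correctly set aside as an indexing slip.

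One remark on where you put the emphasis. You flag the identification of $\ob(G^n_0)$ with an $\calE xt$-class as the main obstacle, but I think that step is actually a non-issue: that identification is made once and for all inside the specific model $\cat{K}=\cat{C}^n\{W\to\calZ_N\}$ (where $D^n$ is concretely $[\Sigma^n(-),-]$ in the stable range), and elements computed in $\cat{K}'$ are simply carried across the fixed isomorphism $D^n_{\cat{K}'}\cong D^n_{\cat{K}}$ before being read off as $\calE xt$-classes; no naturality of the $\calE xt$-dictionary is required. The genuinely delicate point --- which you do touch on but could isolate more cleanly --- is the lifting along backward arrows of the zig-zag: given a cubical extension datum $G^n_0$ in $\cat{K}$, one needs to know there is a matching datum in $\cat{K}_1$ when the weak equivalence points $\cat{K}_1\to\cat{K}$, and ``isomorphism on $[\pt]$ and on $\mathrm{Aut}_{\square}$'' does not by itself hand you a section of $\cat{K}_1(B)\to\cat{K}(B)$ for higher-dimensional $B$. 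This is exactly the lifting implicitly used in the paper's own Toda-bracket Proposition (``if $\cat{C}\sim\cat{K}$ is a weak equivalence then the brackets coincide''), so citing that Proposition, as you do, is the right move --- but it is worth being explicit that the corollary is only as solid as that Proposition, and the paper supplies a proof of neither beyond the $n=1$ references.
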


For $n=1$ this is proved in \cite[5.1]{BJSe}.

\section{$\Delta$-balls}

The diagonal $\Delta$ of a $CW$-complex $X$ is not a cellular map but is homotopic to a cellular map $\overline{\Delta}$
which is called a \emph{diagonal approximation}. Then $\overline{\Delta}=\overline{\Delta}_X$ induces a chain map
\begin{equation*}
	\overline{\Delta}_*:C_*X\to C_*(X\times X)=C_*X\otimes C_*X
\end{equation*}
where $C_*X$ is the cellular chain complex with $C_nX=H_n(X^n,X^{n-1})$. We say that
$(C_*X,\overline{\Delta}_*)$ is a \emph{coalgebra} if $\overline{\Delta}_*$ is coassociative and the augmentation
$\varepsilon_0=(\varepsilon_0')_*:C_*B\to C_*(\pt)=R$ satisfies $(1\otimes
\varepsilon_0)\overline{\Delta}_*=1=(\varepsilon_0\otimes 1)\overline{\Delta}_*$. In general it is not possible to find
a diagonal approximation $\overline{\Delta}$ such that $(C_*X,\overline{\Delta}_*)$ is a coalgebra, but we consider
$CW$-complexes with the following nice properties.

\begin{defn}
	A $\Delta$-\emph{$CW$-complex} $X$ is a regular $CW$-complex together with a diagonal approximation
	$\overline{\Delta}$ and a homotopy $D:\overline{\Delta}\simeq \Delta$ in \cat{Top} such that the following properties
	are satisfied.
	\begin{enumerate}[label=(\alph*)]
		\item Each subcomplex $Y$ of $X$ admits a commutative diagram
			\begin{equation*}
				\xymatrix{
				X\ar[r]^{\overline{\Delta}}& X\times X\\
				Y\ar[u]\ar[r]^{\overline{\Delta}_Y}&Y\times Y\ar[u]
				}
			\end{equation*}
			and the homotopy $D$ induces a homotopy $D_Y:\overline{\Delta}_Y\simeq\Delta_Y$. 
		\item The cellular chain complex $(C_*X,\overline{\Delta}_*)$ is a coalgebra.
	\end{enumerate}
	We have the following properties of $\Delta$-$CW$-complexes.
	\begin{enumerate}[label=(\alph*)]
			\setcounter{enumi}{2}
		\item The interval $I$ is a $\Delta$-$CW$-complex using $\overline{\Delta}$ in $I\times 0\cup 1\times I$.
		\item The product $X\times Y$ of $\Delta$-$CW$-complexes is a $\Delta$-$CW$-complex given by
			\begin{equation*}
				\overline{\Delta}=(1\times T\times 1)(\overline{\Delta}_X\times \overline{\Delta}_Y),\quad D^t=(1 \times T\times
				1)(D^t_X\times D^t_Y),\ t\in I,
			\end{equation*}
			where $T:X\times Y\to Y\times X$ is the interchange map.
		\item A subcomplex $Y$ of a $\Delta$-$CW$-complex $X$ is a $\Delta$-$CW$-complex. 
		\item Let $X,X'$ be $\Delta$-$CW$-complexes and let $Y\subset X$ and $Y\subset X'$ be the inclusions of
			$\Delta$-$CW$-subcomplexes. Then the union $X\cup_Y X'$ is a $\Delta$-$CW$-complex with
			$\overline{\Delta}=\overline{\Delta}_X\cup \overline{\Delta}_{X'}$.
		\item Let $X,X',Y$ be $\Delta$-$CW$-complexes and let $Y\subset X$ be the inclusion of a $\Delta$-$CW$-subcomplex.
			Then the pushout $P$ as in the diagram
			\begin{equation*}
				\xymatrix{
				X\times X'\ar[r]&P\\
				Y\times X'\ar[u]\ar[r]_q&X'\ar[u]
				}
			\end{equation*}
			is a $\Delta$-$CW$-complex. Here $q$ is the projection. For example a relative cylinder is such a pushout.
	\end{enumerate}
\end{defn}

\begin{defn}
	A $\Delta$-ball is a $\Delta$-$CW$-complex for which the underlying $CW$-complex is a ball, see section \ref{IndSet}.
\end{defn}

\begin{prop}
	Each ball $B$ in the indexing set $\mathbb{B}$ of balls in section \ref{IndSet} has the canonical structure of a
	$\Delta$-ball.
\end{prop}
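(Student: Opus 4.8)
The plan is to proceed by structural induction on the construction of $\mathbb{B}$ (Definition of the indexing set), equipping each ball in $\mathbb{B}$ with a $\Delta$-structure that is canonical in the sense that it is compatible with all five generating operations used to build $\mathbb{B}$. Concretely, I would define the $\Delta$-structure on the point $\pt$ to be the trivial one (the only cellular map $\pt\to\pt\times\pt=\pt$ is the identity, with constant homotopy $D$), and then invoke the closure properties $(c)$--$(g)$ of $\Delta$-$CW$-complexes listed in the definition to push this structure through the operations $B\mapsto J(B)=I_{\partial B}B$, $(B,B')\mapsto B\times B'$, passage to subcomplexes $(B,A)\mapsto A$, and gluing $(B,B')\mapsto B\cup_A B'$.

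First I would record that the interval $I=J(\pt)$ gets its $\Delta$-structure from property $(c)$, and more generally that for any ball $B$ already carrying a canonical $\Delta$-structure, the relative cylinder $J(B)=I_{\partial B}B$ is a $\Delta$-$CW$-complex by property $(g)$ (it is the pushout of $Y\times X'\leftarrow Y\times X'\to X'$ with $X=I$, $X'=B$, $Y=\partial B$ — indeed the excerpt explicitly notes ``a relative cylinder is such a pushout''), using the product structure on $I\times B$ from $(d)$ and the subcomplex structure on $\partial B\subset B$ from $(e)$. Next, products $B\times B'$ are handled directly by $(d)$, subcomplexes $A\subset B$ (in particular the second term of a ball pair) by $(e)$, and gluings $B\cup_A B'$ by $(f)$, provided the two $\Delta$-structures on the common subcomplex $A$ agree. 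So the proof reduces to checking this one coherence condition: that the $\Delta$-structure induced on a subball $A$ from $B$ is independent of which ball $A$ sits inside, and in particular that the two restrictions to $A=B\cap B'$ in a gluing coincide.

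The main obstacle is exactly this well-definedness/coherence issue. Clause $(a)$ of the definition of a $\Delta$-$CW$-complex guarantees that the diagonal approximation $\overline{\Delta}_Y$ and homotopy $D_Y$ on a subcomplex $Y$ are the genuine restrictions of those on the ambient complex, so the induced structure on a subball depends only on the ambient $\Delta$-structure; thus the real content is that the $\Delta$-structure produced on $B$ by the inductive recipe does not depend on the particular regular sequence of elementary-ball constructions witnessing $B\in\mathbb{B}$. I would address this by observing that at the bottom level each elementary ball is built from $\pt$ by iterated $J(-)$ and $\times$, where the structure is forced, and that clause $(f)$ together with clause $(a)$ makes $\overline{\Delta}_{B\cup_A B'}=\overline{\Delta}_B\cup\overline{\Delta}_{B'}$ and $D_{B\cup_A B'}=D_B\cup D_{B'}$ the \emph{unique} $\Delta$-structure restricting to the given ones on $B$ and $B'$; hence by induction on the number of pieces in a regular sequence the glued structure is uniquely determined, and the gluing rule of section \ref{BCatUni} (independence of the regular sequence for the underlying maps) upgrades to independence for the $\Delta$-data. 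Finally I would note that converse part $(5)$ of the definition of $\mathbb{B}$ — if $B\cup_A B'\in\mathbb{B}$ then $B,B'\in\mathbb{B}$ — is consistent, since restricting a $\Delta$-structure to a subball is always possible by $(e)$. This completes the induction and establishes that every $B\in\mathbb{B}$ carries a canonical $\Delta$-ball structure.
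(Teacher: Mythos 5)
Your proof is correct and follows the same route as the paper's one-line proof: a structural induction over the closure conditions defining $\mathbb{B}$, matching each with the corresponding closure property (c)--(g) of $\Delta$-$CW$-complexes, with $\pt$ as the base case. The one thing you add is an explicit discussion of well-definedness of the ``canonical'' structure, which the paper simply elides; your sketch handles the comparison of different gluing sequences via (a) and (f), but does not fully close the case where the same ball in $\mathbb{B}$ arises through structurally different constructions (e.g.\ both as a relative cylinder $J(C)$ and as a union $B_1\cup_A B_2$), so the coherence claim remains at the same informal level as the paper.
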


This follows from the properties of $\Delta$-$CW$-complexes above, compare the definition of $\mathbb{B}$ in section
\ref{IndSet}.

\section{Track categories associated to truncated chain algebras}\label{TruChaAlg}

Let $R$ be a commutative ring with unit. We use the category of $R$-modules with tensor product $\otimes=\otimes_R$. 

A \emph{chain algebra} $Q$ is a non-negatively bigraded $R$-module $Q=\{ Q^r_s; r,s\geq 0 \}$ with unit $1\in
Q_0^0$, associative multiplication
\begin{equation*}
	\mu:Q^r_s\otimes Q^{r'}_{s'}\to Q^{r+r'}_{s+s'},\ \mu(x\otimes y)=x\cdot y, 
\end{equation*}
and differential $d:Q_s^r\to Q^r_{s-1}$ satisfying $d\circ d=0$ and 
\begin{equation*}
	d(x\cdot y)=(dx)\cdot y+(-1)^s x\cdot(dy).
\end{equation*}
(If $Q$ is concentrated in upper degree 0 then $Q$ is a chain algebra in the usual sense.) A $Q$-\emph{module} is a non
negatively bigraded $R$-module $M=\{ M_s^r;r,s\geq 0 \}$ with a differential $d:M^r_s\to M^r_{s-1},\ d\circ
d=0$, and an action
\begin{equation*}
	\mu:M^r_s\otimes Q^{r'}_{s'}\to M^{r+r'}_{s+s'},\ \mu(x\otimes y)=x\cdot y,
\end{equation*}
satisfying the formula for $d(x\cdot y)$ above. (If $M$ is concentrated in upper degree $0$ then $M$ is a chain complex
in the usual sense.) A $Q$-\emph{morphism} $f:M\to N$ between $Q$-modules is a map $f:M^r_s\to N^r_s$ with $df=fd$ and
$f(x\cdot y)=f(x)\cdot y$.

Below we shall consider $Q$-modules of the form
\begin{equation*}
	C\otimes L\otimes Q
\end{equation*}
where $C$ is a chain complex and $L$ is a finitely generated free graded $R$-module. Here $C$ and $L$ are concentrated
in upper degree 0. 

\begin{defn}
	A chain algebra $Q$ is $n$-\emph{truncated} if $Q^r_s=0$ for $s>n$. The $n$-\emph{th truncation} $Q(n)$ of a chain
	algebra $Q$ is given by
	\begin{equation*}
		Q(n)^r_s=
		\begin{cases}
			0&\text{for }s>n\\
			Q^r_n/dQ^r_{n+1}&\text{for }s=n\\
			Q^r_s&\text{for }s<n
		\end{cases}
	\end{equation*}
	Then $Q(n)$ is an $n$-truncated chain algebra.
\end{defn}

We now define for an $n$-truncated chain algebra $Q$ the $n$-\emph{th order track category $\cat{K}^Q$ associated to
$Q$} by the properties in (1), (2) and (3) below.

\begin{enumerate}
	\item The objects of $\cat{K}^Q$ are the finitely generated free graded $R$-modules; they form the set $\calO$
		of objects. The zero object $0\in\calO$ is the trivial module. For $B\in \mathbb{B}(n)$ let $\cat{K}^Q(B)$ be
		the category with objects in $\calO$ and morphisms $f:L\to L'$ with $L,L'\in\calO$ given by $Q$-morphisms
		\begin{equation*}
			C_*(B)\otimes L\otimes Q\xrightarrow{f}L'\otimes Q
		\end{equation*}
		Composition of such morphisms $gf:L\to L'\to L''$ is defined by the composite
		\begin{equation*}
			C_*B\otimes L\otimes Q\xrightarrow{\overline{\Delta}_*}C_*B\otimes C_*B\otimes L\otimes Q\xrightarrow{C_*B\otimes
			f}C_*B\otimes L'\otimes Q\xrightarrow{g}L''\otimes Q
		\end{equation*}
		Since $(C_*B,\overline{\Delta}_*)$ is a coalgebra this is a well defined category. The identity $1:L\to L$ is given
		by $C_*(B)\otimes L\to L$ induced by $(\varepsilon_0')_*:C_*(B)\to C_*(\pt)=R$.
		Let $j:A\to B$ be a morphism in $\mathbb{B}(n)$ then $j$ induces a coalgebra morphism $j_*:C_*A\to C_*B$ and the
		functor $j^*:\cat{K}^Q(B)\to \cat{K}^Q(A)$ carries $f$ to $f\circ (j_*\otimes L\otimes Q)$.
\end{enumerate}

\begin{lem}
	The relation of $\simeq$ defined by $\cat{K}^Q$ is a natural equivalence relation.
\end{lem}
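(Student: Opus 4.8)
The plan is to show, for every $B\in\mathbb{B}(n)$ and all objects $L,L'$, that $\simeq$ is an equivalence relation on $\cat{K}^Q(B)(L,L')$ and that it is compatible with composition in $\cat{K}^Q(B)$, so that the homotopy categories $\cat{K}^Q[B]=\cat{K}^Q(B)\modu$ are defined. The compatibility of the quotient with the restriction functors $\partial_A$ is then automatic, by the same reasoning as in the axiom: $i^+\circ i_A=i^-\circ i_A$ for a ball pair $(B,A)$ gives $\partial_A\partial^-=\partial_A\partial^+$, hence $\partial_Af=\partial_Ag$ whenever $f\simeq g$.

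If $\dim(B)=n$ the relation $\simeq$ is equality and there is nothing to check, so assume $\dim(B)<n$; then $\dim(JB)=\dim(B)+1\le n$ and the balls $B,JB$ and $\overline{B}:=JB\cup_B JB$ all lie in $\mathbb{B}(n)$. The key remark is that the ball pair inclusions $i^\pm\colon B\subset JB$ and the projection $\epsilon'\colon JB\to B$ are maps of $\mathbb{B}(n)$, hence induce coalgebra morphisms on cellular chains, so that $\partial^\pm=(i^\pm)^*\colon\cat{K}^Q(JB)\to\cat{K}^Q(B)$ and $\varepsilon=(\epsilon')^*\colon\cat{K}^Q(B)\to\cat{K}^Q(JB)$ are functors; moreover $\epsilon'i^\pm=\id_B$ gives $\partial^\pm\varepsilon=(\epsilon'i^\pm)^*=\id$. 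Reflexivity is then witnessed by $\varepsilon(f)\colon f\simeq f$. For compatibility with composition, suppose $F\colon f\simeq f'$ and $G\colon g\simeq g'$ in $\cat{K}^Q(JB)$, where $f,f'\colon L\to L'$ and $g,g'\colon L'\to L''$; the composite $GF$ in $\cat{K}^Q(JB)$ is a morphism $L\to L''$, and since $\partial^\pm$ are functors we get $\partial^\pm(GF)=(\partial^\pm G)(\partial^\pm F)$, so $GF\colon gf\simeq g'f'$.

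For symmetry and transitivity I use the remaining two structure maps. They lie only in $\mathbb{B}_{\partial}(n)$, but still induce chain maps on cellular chains, and precomposing a $Q$-morphism with (chain map)$\,\otimes L\otimes Q$ is again a $Q$-morphism — no coalgebra hypothesis is needed for that. For symmetry, let $\op'\colon JB\to JB$ be the opposite map; since $\op'i^\pm=i^{\mp}$, the $Q$-morphism $F^{\op}:=F\circ(\op'_*\otimes L\otimes Q)$ is a morphism $L\to L'$ in $\cat{K}^Q(JB)$ with $\partial^-F^{\op}=\partial^+F$ and $\partial^+F^{\op}=\partial^-F$, so $F^{\op}\colon g\simeq f$ when $F\colon f\simeq g$. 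For transitivity, let $F\colon f\simeq g$ and $G\colon g\simeq h$. A union of CW-complexes yields a pushout of cellular chain complexes and $(-)\otimes L\otimes Q$ preserves colimits, so $C_*(\overline{B})\otimes L\otimes Q$ is the pushout of two copies of $C_*(JB)\otimes L\otimes Q$ amalgamated over $C_*(B)\otimes L\otimes Q$; the amalgamation condition is precisely $\partial^+F=\partial^-G$ $(=g)$, so $F$ and $G$ assemble to a single $Q$-morphism $G\cup F$ on $C_*(\overline{B})\otimes L\otimes Q$. Precomposing with $\square'_*\otimes L\otimes Q$, where $\square'\colon JB\to\overline{B}$ is an action map of $\mathbb{B}_{\partial}(n)$ (the identity on the boundary, restricting on the two ends $i^\pm$ of $JB$ to the inclusions of the two outer ends of $\overline{B}$), yields $H:=(G\cup F)\circ(\square'_*\otimes L\otimes Q)$, a morphism $L\to L'$ in $\cat{K}^Q(JB)$ with $\partial^-H=\partial^-F=f$ and $\partial^+H=\partial^+G=h$; hence $f\simeq h$.

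The only delicate point is the orientation bookkeeping in the last step: one must place $F$ and $G$ on the two cylinders constituting $\overline{B}=JB\cup_B JB$ so that the outer end reached by $\square'\circ i^-$ carries $\partial^-F$ and the one reached by $\square'\circ i^+$ carries $\partial^+G$ — the reversed choice would produce $f\simeq g$ instead of $f\simeq h$. Everything else is assembly of facts already in place: $JB$ and $\overline{B}$ are balls of $\mathbb{B}(n)$; the maps $\epsilon'$, $\op'$, $\square'$ exist with the stated behaviour on the ends; maps of $\mathbb{B}(n)$ induce coalgebra morphisms; and the canonical $\Delta$-ball structures make the categories $\cat{K}^Q(B)$ and the functors $j^*$ well defined. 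The coalgebra structure on $C_*(B)$ itself enters only through the existence of those categories and functors, not in the equivalence-relation argument.
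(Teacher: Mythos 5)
Your proof is correct, and it takes essentially the same approach as the paper: the paper's one-line proof simply records that $C_*(JB)$, as a quotient of $C_*I\otimes C_*B$, is a relative cylinder in the category of chain complexes, and your argument is exactly the unfolding of that fact — reflexivity from $\varepsilon'_*$, compatibility with composition from functoriality of $(i^\pm)^*$, symmetry from $\op'_*$, transitivity from the pushout $C_*(\overline{B})$ and a chain-level action map — together with the (correct) observation that only a chain map, not a coalgebra map, is needed to produce the symmetry and transitivity witnesses in $\cat{K}^Q(JB)$.
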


\begin{proof}
	For the ball $JB$ we know that $C_*(JB)$, as a quotient of $C_*I\otimes C_*B$, is a relative cylinder in the category
	of chain complexes.
\end{proof}

Hence the category $\cat{K}^Q[B]=\cat{K}^Q(B)\modu$ is well defined.

\begin{lem}
	If $\dim(A)=n$ then $\simeq$ on $\cat{K}(B)$ is the trivial relation so that $\cat{K}(B)=\cat{K}[B]$.
\end{lem}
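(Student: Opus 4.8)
The plan is to read the conclusion off the way $\simeq$ is set up in an $n$-th order track category, together with one dimension count. Recall that $\cat{K}^Q$ is being presented as an $n$-th order track category, and that in such a category the relation $\simeq$ on $\cat{K}^Q(B)$ is, by clause (1) of the definition of an $n$-th order track category, declared to be equality whenever $\dim(B)=n$, and to be ``$f\simeq g$ iff there is $F\in\cat{K}^Q(JB)$ with $\partial^-F=f$, $\partial^+F=g$'' only when $\dim(B)<n$. So the whole statement comes down to checking that for $\dim(B)=n$ it is the first clause that is in force.

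The dimension count is immediate: if $\dim(B)=n$ then the relative cylinder $JB=I_{\partial B}B$ has $\dim(JB)=n+1>n$, so $JB$ is not a ball of $\mathbb{B}(n)$. Hence $\cat{K}^Q(JB)$ is not part of the data $\{\,\cat{K}^Q(C)\mid C\in\mathbb{B}(n)\,\}$ constituting $\cat{K}^Q$, and there is nothing available with which to join two $n$-dimensional $Q$-morphisms $C_*(B)\otimes L\otimes Q\to L'\otimes Q$ by a homotopy. Therefore $\simeq$ on $\cat{K}^Q(B)$ is the discrete relation $f\simeq g\Leftrightarrow f=g$, trivially a natural equivalence relation, and consequently $\cat{K}^Q[B]=\cat{K}^Q(B)\modu=\cat{K}^Q(B)$ with $q$ the identity functor; compatibility of $q$ with the restriction functors $\partial_A$ then holds vacuously.

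I expect no real obstacle of a computational kind; the one point to be careful about is conceptual, namely not to replace the intended relation by the coarser one identifying chain-homotopic $Q$-morphisms. That relation is genuinely nontrivial at dimension $n$: for a long interval $[0,k]$ with $k\ge 2$ one can, using the $2$-cells of $C_*(J[0,k])$ and only the truncation hypothesis $Q^r_s=0$ for $s>n$, exhibit distinct $Q$-morphisms joined by some $F\in\cat{K}^Q(J[0,k])$ — but $J[0,k]\notin\mathbb{B}(n)$, which is exactly why the definition of an $n$-th order track category rules such $F$ out. The genuinely substantive cases $\dim(B)<n$ were already settled in the preceding Lemma via the relative-cylinder structure of $C_*(JB)$; the present Lemma is the complementary, essentially formal, statement at the top dimension.
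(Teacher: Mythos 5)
The paper's proof of this lemma says ``This follows readily by the assumption that $Q$ is $n$-truncated,'' and the surrounding Lemma (that $\simeq$ is a natural equivalence relation) is proved by noting that $C_*(JB)$ is a relative cylinder in chain complexes. Together these strongly indicate that the relation $\simeq$ on $\cat{K}^Q(B)$ is being \emph{defined uniformly} for all $B$ by the homotopy relation $f\simeq g\Leftrightarrow\exists\,F:C_*(JB)\otimes L\otimes Q\to L'\otimes Q$ with $\partial^-F=f,\ \partial^+F=g$; the category $\cat{K}^Q(JB)$ is perfectly well-defined by the formula in clause (1) of section \ref{TruChaAlg} even when $\dim(JB)=n+1>n$, since $C_*(JB)$ only needs to be a chain complex. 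The lemma is then a \emph{verification} that this construction-level homotopy relation collapses to equality in top dimension, and it is exactly here that $n$-truncation of $Q$ enters: the $(n+1)$-cells of $JB$ are forced to map to zero for degree reasons, and one then reads off $f=g$ from the chain map condition on these cells. Your proof instead treats $\simeq$ as decreed to be equality by clause (1) of the abstract definition, argues that $JB\notin\mathbb{B}(n)$ so ``there is nothing available with which to join two $n$-dimensional $Q$-morphisms,'' and concludes. That turns the lemma into a tautology, makes the paper's explicit appeal to $n$-truncation a non sequitur, and misses the logical role the lemma plays: one is not invoking the axioms of an $n$-th order track category but \emph{checking} that $\cat{K}^Q$ satisfies them. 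So this is not a minor stylistic difference — the central step of the paper's proof (the degree/truncation computation) is absent from yours, replaced by a definitional reading that proves nothing about $\cat{K}^Q$.

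Your last paragraph does however raise a genuinely interesting point that is worth flagging, even though you use it to justify the wrong approach. For a non-elementary ball such as $B=[0,k]$ with $k\geq 2$, the ball $JB$ has $n$-cells $I v$ coming from thickened interior $(n-1)$-cells $v$ of $B$, and a homotopy $F$ may be nonzero on $Iv\otimes L\otimes Q$; one then only gets $\partial^+F-\partial^-F$ expressed in terms of $F(Iv\otimes -)$, not that it vanishes. Concretely for $n=1$, $B=[0,2]$, $L=L'=R$, a nonzero cycle $q_1\in Q^0_1$ produces $f\neq g$ joined by a homotopy in $\cat{K}^Q(JB)$. So the truncation argument, as I read it, goes through cleanly only for elementary $B$ (one top cell, so that $I(de)=0$), and a complete proof must either restrict to elementary balls or say something more. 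This is a real gap worth noting, but the way to note it is to point out the incompleteness of the degree argument — not to replace the intended substantive claim by the vacuous one. As written, your argument does not engage with what the lemma is actually asserting about $\cat{K}^Q$.
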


This follows readily by the assumption that $Q$ is $n$-truncated.

\begin{enumerate}
		\setcounter{enumi}{1}
	\item Let $j:A\to B$ be a map in $\mathbb{B}_{\partial}(n)$. Then $\partial j:\partial A\to \partial B$ induces a coalgebra
		map $j^{\partial}=C_*(\partial j)$ in the commutative diagram
		\begin{equation*}
			\xymatrix{
			C_*\partial A\ar@{}[d]|{\bigcap}\ar[r]^{j^{\partial}}	& C_*\partial B\ar@{}[d]|{\bigcap}\\
			C_* A\ar[r]^{\overline{j}}&C_*B
			}
		\end{equation*}
		Here we can choose a chain map $\overline{j}$ extending $j^{\partial}$ since $B$ is contractible. We now define
		\begin{equation*}
			j^*:\cat{K}[B]\to K[A]
		\end{equation*}
		by $j^*\{ f \}=\{ f(\overline{j}\otimes L\otimes Q) \}$
\end{enumerate}

\begin{lem}
	$j^*$ is a functor which does not depend on the choice of $\overline{j}$.
\end{lem}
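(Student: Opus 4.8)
The plan is to check four things: that $j^*$ is the identity on the fixed object set $\calO$ (trivial), that the formula $\{f\}\mapsto\{f(\overline j\otimes L\otimes Q)\}$ does not depend on the representative $f$, that it respects identities and composition, and that it does not depend on the chain map $\overline j$. Everything rests on one elementary remark, which I record first: if $X'\subset Y$ is a cellular inclusion with $Y$ a ball (or a product of balls, hence contractible) and $\varphi\colon C_*X'\to C_*Y$ is a chain map of cellular chain complexes, then $\varphi$ extends to a chain map $C_*Y\to C_*Y$ over any larger ball-pair, any two such extensions are chain homotopic rel $C_*X'$, and every such extension commutes with the augmentations $\varepsilon_0$ (automatic when $\dim Y\ge 1$, as then $Y$ is connected with $\partial Y\neq\emptyset$). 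Both existence and uniqueness follow by the usual cellular induction, using that cellular chains are free and that a ball has trivial reduced homology. Applied to $C_*(\partial j)\colon C_*\partial A\to C_*\partial B\hookrightarrow C_*B$, this produces the maps $\overline j$ of item (2) and shows any two are chain homotopic rel $C_*\partial A$; I shall also use it with $C_*B$ replaced by $C_*(B\times B)$.

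Independence of the representative comes next. There is nothing to prove when $\dim B=n$, since then $\cat{K}^Q[B]=\cat{K}^Q(B)$ (the earlier lemma, $Q$ being $n$-truncated), and in that case $\simeq$ is the trivial relation on $\cat{K}^Q(A)$ as well, so all the homotopies constructed below become equalities. Assume then $\dim B<n$ and let $F\in\cat{K}^Q(JB)$ realize $f\simeq f'$, i.e.\ $\partial^-F=f,\ \partial^+F=f'$. Because $\overline j$ carries $C_*\partial A$ into $C_*\partial B$, the chain map $\id_{C_*I}\otimes\overline j$ descends along the collapses defining the relative cylinders to a chain map $\widetilde j\colon C_*(JA)\to C_*(JB)$ with $\widetilde j\circ i^{\pm}_{A,*}=i^{\pm}_{B,*}\circ\overline j$. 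Then $F\circ(\widetilde j\otimes L\otimes Q)$ is a morphism of $\cat{K}^Q(JA)$, and applying $\partial^{\pm}=(i^{\pm})^*$ to it gives $\partial^-=f(\overline j\otimes L\otimes Q)$ and $\partial^+=f'(\overline j\otimes L\otimes Q)$; hence these are $\simeq$ in $\cat{K}^Q(A)$ and $j^*\{f\}=j^*\{f'\}$ in $\cat{K}^Q[A]$.

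Independence of $\overline j$ runs the same argument with a cylinder mapping into $B$. Given extensions $\overline j,\overline j'$ of $C_*(\partial j)$, pick by the uniqueness part of the remark a chain homotopy $s\colon C_*A\to C_{*+1}B$ with $ds+sd=\overline j'-\overline j$ and $s|_{C_*\partial A}=0$. Since $s$ kills $C_*\partial A$, the triple $(\overline j,\overline j',s)$ assembles, through the presentation of $C_*(JA)$ as a quotient of $C_*I\otimes C_*A$, into a chain map $\widehat j\colon C_*(JA)\to C_*B$ with $\widehat j\circ i^-_{A,*}=\overline j$ and $\widehat j\circ i^+_{A,*}=\overline j'$. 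For every $f\colon L\to L'$ in $\cat{K}^Q(B)$ the composite $f\circ(\widehat j\otimes L\otimes Q)$ lies in $\cat{K}^Q(JA)$ with $\partial^-=f(\overline j\otimes L\otimes Q)$ and $\partial^+=f(\overline j'\otimes L\otimes Q)$, so the two recipes for $j^*\{f\}$ agree in $\cat{K}^Q[A]$. Preservation of identities is immediate: the identity of $L$ in $\cat{K}^Q(B)$ is induced by $\varepsilon_0\colon C_*B\to R$, and precomposing with $\overline j\otimes L\otimes Q$ gives the morphism induced by $\varepsilon_0\circ\overline j=\varepsilon_0$, which is the identity of $L$ in $\cat{K}^Q(A)$.

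Finally, compatibility with composition, which is the delicate point, because $gf$ in $\cat{K}^Q(B)$ is formed with the diagonal $\overline\Delta_*^B$ while $\overline j$ need not be comultiplicative. Unwinding the definitions shows that $j^*(\{g\}\{f\})$ and $j^*\{g\}\cdot j^*\{f\}$ are obtained from one and the same morphism built out of $f$ and $g$ by precomposing, respectively, with $\overline\Delta_*^B\circ\overline j$ and with $(\overline j\otimes\overline j)\circ\overline\Delta_*^A$, viewed as chain maps $C_*A\to C_*B\otimes C_*B=C_*(B\times B)$. By property (a) of a $\Delta$-$CW$-complex, $\overline\Delta^B$ restricts on the subcomplex $\partial B$ to $\overline\Delta^{\partial B}$, and $C_*(\partial j)$ is a morphism of coalgebras (item (2)); hence these two chain maps agree on $C_*\partial A$. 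As $B\times B$ is again a contractible ball, the remark supplies a chain homotopy between them rel $C_*\partial A$, which I realize, exactly as above, by a chain map $C_*(JA)\to C_*(B\times B)$; composing it with $f$ and $g$ produces a $Q$-morphism on $C_*(JA)\otimes L\otimes Q$ whose $\partial^{\pm}$ are precisely the two morphisms in question, so they are $\simeq$ in $\cat{K}^Q(A)$ and therefore equal in $\cat{K}^Q[A]$. I expect this last step to be the main obstacle, precisely because it forces the comparison of $\overline\Delta_*^B\overline j$ with $(\overline j\otimes\overline j)\overline\Delta_*^A$ and so depends on the coalgebra structure of the $\Delta$-balls, whereas all the other points use only the relative cylinder together with the contractibility of balls.
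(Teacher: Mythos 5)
Your proof is correct and follows the same essential mechanism as the paper's: because balls (and products of balls) are contractible and cellular chains are free, any two chain maps that agree on the boundary subcomplex are chain homotopic rel that subcomplex, and such a chain homotopy can be packaged as a morphism of $\cat{K}^Q(JA)$ whose $\partial^{\pm}$ are the two maps to be compared. The difference is one of completeness rather than strategy: the paper's proof records only the compatibility with composition (the comparison of $\overline{\Delta}_*^B\,\overline{j}$ with $(\overline{j}\otimes\overline{j})\,\overline{\Delta}_*^A$ rel $C_*\partial A$), and leaves implicit the well-definedness of $j^*\{f\}$ on homotopy classes, independence of the chosen extension $\overline{j}$, and preservation of identities. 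You verify all four, and your identification of the composition check as the one genuinely requiring the coalgebra structure (via property (a) of $\Delta$-$CW$-complexes and the assertion that $j^{\partial}$ is a coalgebra map) matches the paper's emphasis exactly. The only point I would flag as asserted rather than argued is the claim that in the top case $\dim B=n$ the constructed homotopies ``become equalities''; this does follow from the paper's preceding lemma that $\simeq$ is the trivial relation in top dimension (a consequence of $Q$ being $n$-truncated), but you are leaning on that lemma rather than re-deriving it, which is fine given it is stated immediately before item (2).
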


\begin{proof}
	$j^{\partial}$ is a coalgebra map but $\overline{j}$ is only a chain map extending $j^{\partial}$. This yields the
	diagram
	\begin{equation*}
		\xymatrix{
		C_*A\ar[r]^{\overline{j}}\ar[d]_{\overline{\Delta}_*}	&	C_*B\ar[d]^{\overline{\Delta}_*}\\
		C_*A\otimes C_*A\ar[r]^-{\overline{j}\otimes \overline{j}} & C_*B\otimes C_*B
		}
	\end{equation*}
	which commutes on the boundary $C_*\partial A$. Since $B$ is contractible there is a homotopy
	$(\overline{j}\otimes\overline{j})\overline{\Delta}_*\simeq \overline{\Delta}_*\overline{j}\rel  C_*\partial
	A$. This shows that $j^*(gf)=(j^*g)(j^*f)$.
\end{proof}

\begin{enumerate}
		\setcounter{enumi}{2}
	\item We now consider the \emph{gluing functor} in $\cat{K}^Q$. Given ball pairs $(B,A)$ and $(B,A')$ in
		$\mathbb{B}(n)$ we have 
		\begin{equation*}
			C_*(B\cup_A B')=C_*B\cup_{C_*A}C_*B'
		\end{equation*}
		where the right hand side is a pushout of chain complexes. Given $F$ in $\cat{K}^Q(B)$ and $G$ in $\cat{K}^Q(B')$
		with $\delta_A F=\delta_A G$ we get $F\cup G$ by
		\begin{equation*}
			C_*(B\cup_A B')\otimes L\otimes Q=C_*B\otimes L\otimes Q\cup_{C_*A\otimes L\otimes Q}C_*B'\otimes L\otimes
			Q\xrightarrow{F\cup G}L'\otimes Q
		\end{equation*}
	\item Finally we obtain $\otimes$-\emph{products} in $\cat{K}^Q$ as follows. Let $g:Y\to Z\in\cat{K}^Q(B')$ and
		$f:X\to Y\in \cat{K}^Q(B)$. Then $g\otimes f:X\to Z\in\cat{K}^Q(B'\times B)$ is the composite
		\begin{equation*}
			C_*(B'\times B)\otimes X\otimes Q=C_*B'\otimes C_*B\otimes X\otimes Q\xrightarrow{C_*B'\otimes f}C_*B'\otimes
			Y\otimes Q\xrightarrow{g}Z\otimes Q.
		\end{equation*}
\end{enumerate}

\begin{thm}
	The data in $(1)\ldots(4)$ above describe a well-defined $n$-th order track category $\cat{K}^Q$ with all the
	properties in section \ref{HigOrdTra}. Moreover for the $(n-1)$-truncation we get
	\begin{equation*}
		(\cat{K}^Q)^{n-1}=\cat{K}^{Q(n-1)}.
	\end{equation*}
\end{thm}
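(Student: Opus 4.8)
The plan is to verify, one at a time, the seven axioms (1)--(7) of an $n$-th order track category from Section~\ref{HigOrdTra}, translating each piece of ball geometry into its chain-level counterpart: for $B\in\mathbb{B}$ the complex $C_*(B)$ carries the canonical coalgebra structure coming from its $\Delta$-ball structure; the cellular chains of a product of balls is $C_*(B'\times B)=C_*B'\otimes C_*B$; a gluing of ball pairs becomes a pushout $C_*(B\cup_AB')=C_*B\cup_{C_*A}C_*B'$ in chain complexes; $C_*(JB)$ is a relative cylinder on $C_*B$; every $B\in\mathbb{B}$ is contractible, so any two chain maps out of $C_*B$ that agree on a subcomplex are chain homotopic rel that subcomplex (this is the source of all ``thin fillers''); and $(-)\otimes L\otimes Q$ preserves colimits, hence pushouts, being built from left adjoints. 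With these in hand, axioms (1) and (2) are precisely the three Lemmas already proved for $\cat{K}^Q$: that $\simeq$ is a natural equivalence relation, that $j^*$ is a well-defined functor independent of the chosen chain extension $\overline j$ of $j^{\partial}$, and that $\simeq$ is trivial in top dimension; for (2) one adds only that maps $f,g$ in $\mathbb{B}_{\partial}(n)$ with $f|\partial A=g|\partial A$ induce the same $j^{\partial}$, so $\overline f\simeq\overline g\rel C_*\partial A$ and $f^*=g^*$ on $\cat{K}^Q[B]$.

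For (3) I would note that colimit-preservation of $(-)\otimes L\otimes Q$ makes $C_*(B\cup_AB')\otimes L\otimes Q$ literally the pushout of $C_*B\otimes L\otimes Q$ and $C_*B'\otimes L\otimes Q$ over $C_*A\otimes L\otimes Q$, so $F\cup G$ is well defined and the square with the quotient functors commutes by construction; naturality with respect to $\mathbb{B}(n)$ and $\mathbb{B}_{\partial}(n)$ is functoriality of pushouts, and the gluing rule is associativity and commutativity of iterated pushouts. For (4), the relative-cylinder structure on $C_*(JA)$ gives the deformation category $(\cat{K}^Q(A),\cat{K}^Q[JA])$, and I would verify the four equations of Lemma~\ref{DefoTraCatLem} by exhibiting the homotopies $\overline\varepsilon,\overline\op,\overline\square$ in $\cat{K}^Q(JJA)$ exactly as in the proof of Lemma~\ref{BJBDefoCatLem}, using contractibility of $JJA$; the action property of $\square_A$ (transitive and effective $\mathrm{Aut}_{\square}$-action) follows the pattern of the Lemma in Section~\ref{BDefCat}, again realizing the comparison maps by contractibility. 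For the extension property (5) one direction is immediate; for the converse I would transport a nullhomotopy $H\in\cat{K}^Q(JA)$ of $f$ along the coalgebra identification $C_*(B)\cong C_*(JA)$ induced by the ball-pair homeomorphism $(JA,A^-)\approx(B,A)$ (under which $A\leftrightarrow A^-$ and $A_{\op}\leftrightarrow A^+$), setting $F=H(\theta\otimes L\otimes Q)$; the point needing a remark is that this identification exists on the nose as a coalgebra iso, which is where the canonicity of the $\Delta$-ball structures is used.

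Axiom (6) is formal given $C_*(B'\times B)=C_*B'\otimes C_*B$: associativity and naturality of $\otimes$ are those of the tensor product, $g\otimes0=0=0\otimes f$ because the trivial maps factor through $0\otimes Q=0$, the collapse to ordinary composition when a factor is $\pt$ uses $C_*(\pt)=R$ and the diagonal (for $\Delta^*(g\otimes f)=gf$), and compatibility of $\otimes$ with $\partial_A$ and with $\cup$ is the compatibility of $(-)\otimes C_*A\hookrightarrow(-)\otimes C_*B$ and of $\otimes$ with pushouts. \emph{The main obstacle is the boundary formula} (7). Here I would form $G\otimes F\in\cat{K}^Q(JB\times JB_1)$ and restrict it to the boundary sphere $\partial(JB\times JB_1)=U_{\op}\cup U$; from the definitions of $\otimes$ and $\partial_A$ one gets $\partial_{U_{\op}}(G\otimes F)=(\partial^-G)\otimes F\cup G\otimes(\partial^-F)$ and $\partial_U(G\otimes F)=(\partial^+G)\otimes F\cup G\otimes(\partial^+F)$, which agree on $\partial U=\partial U_{\op}$. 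The content of $(*)$ is then that $h_U^*$ of the first equals, in $\cat{K}^Q[U]$, the right-hand side, the reversals $F^{\op},G^{\op}$ arising from how the interval orientations of the two $J$-factors propagate to $U$ versus $U_{\op}$; since $JB\times JB_1$ is contractible and $h_U$ is the identity on $\partial U$, both sides extend over $U$ and agree on $\partial U$, so a chain homotopy rel $C_*\partial U$ exists and descends to the required equality. The real work is sign- and orientation-bookkeeping, plus checking that the case $\dim(B\times B_1)=0$ reduces to the horizontal-composition law $(*)$ of Section~\ref{TraCat}.

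Finally, for $(\cat{K}^Q)^{n-1}=\cat{K}^{Q(n-1)}$ I would use the quotient of chain algebras $Q\to Q(n-1)$, which induces maps $\cat{K}^Q(B)\to\cat{K}^{Q(n-1)}(B)$, and show that for $\dim(B)\le n-1$ these factor through isomorphisms $\cat{K}^Q[B]\xrightarrow{\ \cong\ }\cat{K}^{Q(n-1)}(B)=\cat{K}^{Q(n-1)}[B]$ (recall $\simeq$ is trivial in dimension $n-1$ for the $(n-1)$-truncated algebra $Q(n-1)$). A degree count does it: a $Q$-morphism out of $C_*B\otimes L\otimes Q$ automatically annihilates the part of homological degree $>n$, so at $\dim(B)\le n-2$ it is already a $Q(n-1)$-morphism, and at $\dim(B)=n-1$ two $Q$-morphisms become homotopic in $\cat{K}^Q[B]$ exactly when their top-degree components agree modulo the image of $d$ from homological degree $n$ --- which is the quotient $Q^\bullet_{n-1}/dQ^\bullet_n$ defining $Q(n-1)$ in degree $n-1$. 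Compatibility of $j^*,\cup,\otimes$ and $\square_A$ with these identifications is immediate since all are induced by the same coalgebra and chain maps; the point requiring care is matching the bigrading conventions so that $Q^\bullet_{n-1}/dQ^\bullet_n$ appears on the nose rather than merely up to isomorphism.
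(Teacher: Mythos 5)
The paper states this theorem without supplying a proof, so there is no argument in the paper to compare against; your axiom-by-axiom verification, with the dictionary ball $\mapsto$ cellular chain complex, product $\mapsto$ tensor, gluing $\mapsto$ pushout, cylinder $\mapsto$ chain-cylinder, contractibility $\mapsto$ thin fillers, is the natural and correct strategy. Most of your individual checks are sound, but three of them have gaps that you would need to close.

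The most concrete error is in your treatment of the extension property (axiom 5). The defining homeomorphism $(JA,A^-)\approx(B,A)$ of a ball pair is only a homeomorphism of pairs, not a cellular map, so it induces neither a chain isomorphism nor a coalgebra isomorphism, and ``canonicity of the $\Delta$-ball structures'' cannot repair this. What you actually have is the same thin-filler principle used everywhere in the paper: $C_*B$ and $C_*(JA)$ are contractible chain complexes containing the common subcomplex $C_*\partial B\cong C_*\partial(JA)$, so a chain map $\theta:C_*B\to C_*(JA)$ extending a chosen boundary identification exists (and is unique up to chain homotopy $\rel$ boundary); one then sets $F=H(\theta\otimes L\otimes Q)$. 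Note that $\theta$ need only be a chain map, not a coalgebra map, since $F$ is required to be a $Q$-morphism, not a composite. The converse direction uses the analogous chain map $C_*(JA)\to C_*B$.

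Your argument for the boundary formula (axiom 7) is too loose as stated. That two $U$-maps agree on $\partial U$ does not imply they are homotopic $\rel\partial U$, and at $\dim U=n$ one even needs strict equality. The correct mechanism is again the thin-filler pattern: exhibit both sides of $(*)$ as pullbacks $\theta_1^*(G\otimes F)$ and $\theta_2^*(G\otimes F)$ of the single $(JB\times JB_1)$-map $G\otimes F$ along chain maps $\theta_1,\theta_2:C_*U\to C_*(JB\times JB_1)$ (built from $\overline{h}_U$ and from the opposite maps $\op'$ and the inclusions) which agree on $C_*\partial U$; contractibility of $JB\times JB_1$ gives a chain homotopy $\theta_1\simeq\theta_2\rel C_*\partial U$, which pulled back along $G\otimes F$ produces the relation in $\cat{K}^Q[U]$, and in the edge case $\dim U=n$ the $n$-truncation of $Q$ kills the homotopy term in top degree, giving strict equality.

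Finally, the degree count for $(\cat{K}^Q)^{n-1}=\cat{K}^{Q(n-1)}$ at $\dim B=n-1$ is too hasty. A morphism $H\in\cat{K}^Q(JB)$ witnessing $f\simeq g$ amounts to a degree-one module map $h:\overline{C}_*B\otimes L\to L'\otimes Q$, where $\overline{C}_*B=C_*B/C_*\partial B$, with $f-g=dh+h\bar{d}$ on interior cells. You account only for the $dh$ term in top degree; the $h\bar{d}$ term is also present whenever $\overline{C}_*B$ is not concentrated in degree $n-1$, i.e. whenever $B$ is not elementary, and the balls of $\mathbb{B}$ (already the long intervals $[0,m]$, a fortiori the glued $2$-balls like $T^2$) are generally not elementary. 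So ``top-degree components agree modulo $dQ^\bullet_n$'' is strictly weaker than the relation $\simeq_Q$. The fix is to decompose $h=h_{\mathrm{top}}+h_{\mathrm{low}}$: the top part lands in $L'\otimes Q^\bullet_n$, satisfies $h_{\mathrm{top}}\bar{d}=0$, and is killed by $Q\to Q(n-1)$, while the low part survives to a homotopy in $\cat{K}^{Q(n-1)}(JB)$. One then checks that $\pi:\cat{K}^Q(B)\to\cat{K}^{Q(n-1)}(B)$ is surjective, carries $\simeq_Q$ into $\simeq_{Q(n-1)}$, and that every $\simeq_{Q(n-1)}$-relation lifts, giving the isomorphism $\cat{K}^Q[B]\cong\cat{K}^{Q(n-1)}[B]$; this is what the theorem actually asserts, since both sides of the displayed identity take the homotopy category in top dimension $n-1$.
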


The homology $H_0=H_0^*(Q)$ is a graded algebra. Let $\cat{mod}(H_0)$ be the category of finitely generated free right
$H_0$-modules $L\otimes H_0,\ L\in\cat{Ab}$. Then we get the bifunctor
\begin{align*}
	&D^k:\cat{mod}(H_0)^{\op}\times\cat{mod}(H_0)\to \cat{Ab}\\
	&D^k(L,L')=\mathrm{Hom}_{H_0}\left(L\otimes H_0,L'\otimes H_0\otimes_{H_0}H_k\right)
\end{align*}
where $H_k=H_k^*(Q)$ is an $H_0$-bimodule.

\begin{thm}
	The $n$-th order track category $\cat{K}^Q$ is abelian with homotopy category
	\begin{equation*}
		K^Q[\pt]=\cat{mod}(H^*_0Q)
	\end{equation*}
	and natural systems $D^k$ defined above $1\leq k\leq n$.
\end{thm}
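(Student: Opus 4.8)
The plan is to verify, for $\cat{K}=\cat{K}^Q$, the three requirements in the definition of an \emph{abelian} $n$-th order track category: the identification of $\cat{K}[\pt]$; the abelianness of each track category $(\cat{K}(B),\cat{K}[JB])$ with $\dim(B)\le n-1$ together with the factorization of its associated natural system through $\partial_0$ and a bifunctor $D^k$; and the well-definedness of the orientation-twisted action $\square_{\ori(B)}$ together with its abelian union property. I would begin by computing $\cat{K}[\pt]$. Since $C_*(\pt)=R$ carries the trivial coalgebra structure, a morphism $L\to L'$ in $\cat{K}^Q(\pt)$ is merely a $Q$-module map $L\otimes Q\to L'\otimes Q$, which — as $L$ and $L'$ lie in lower degree $0$ — is the same as an $R$-linear map $L\to L'\otimes Q_0$ with no further condition. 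Using that $C_*(J(\pt))=C_*(I)$ is the standard interval complex, two such maps are $\simeq$ exactly when they differ by $d\circ h$ with $h\colon L\to L'\otimes Q_1$, so
\begin{align*}
	\cat{K}^Q[\pt](L,L')&=\mathrm{Hom}_R\!\left(L,\,L'\otimes(Q_0/dQ_1)\right)\\
	&=\mathrm{Hom}_R\!\left(L,\,L'\otimes H_0\right)=\mathrm{Hom}_{H_0}\!\left(L\otimes H_0,\,L'\otimes H_0\right);
\end{align*}
the composition, being induced by the trivial coalgebra $R$, is precisely composition of $H_0$-linear maps, whence $\cat{K}^Q[\pt]\cong\cat{mod}(H^*_0 Q)$.

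Next I would fix a ball $B$ with $\dim(B)=k-1\le n-1$, so that $\dim(JB)=k\le n$, and a morphism $f\colon L\to L'$ of $\cat{K}^Q(B)$. A track $F\colon f\Rightarrow f$ in $\cat{K}^Q[JB]$ differs from the constant track $\varepsilon(f)$ by a $Q$-module map that vanishes on $\partial(JB)=B^-\cup B^+$, hence factors through $L\otimes C_*(JB,\partial(JB))\otimes Q$; since $(JB,\partial(JB))\approx(E^k,S^{k-1})$ the relative complex $C_*(JB,\partial(JB))$ is quasi-isomorphic over $R$ to $R$ concentrated in degree $k$, and — using also $Q_{k+1}=0$ when $k=n$ — a chain-level computation yields a natural isomorphism $\sigma$ exhibiting
\begin{align*}
	\mathrm{Aut}_{\square}(f)&\cong\mathrm{Hom}_R\!\left(L,\,L'\otimes H_k(Q)\right)\\
	&\cong\mathrm{Hom}_{H_0}\!\left(L\otimes H_0,\,L'\otimes H_0\otimes_{H_0}H_k\right)=D^k_{\partial_0 f}.
\end{align*}
In particular $\mathrm{Aut}_{\square}(f)$ is abelian, so $(\cat{K}^Q(B),\cat{K}^Q[JB])$ is an abelian track category, and the group depends only on the source and target of $f$, hence only on $\partial_0 f$. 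The naturality of $\sigma$ — that pre- and post-composition of tracks by morphisms $a,b$ of $\cat{K}^Q(B)$ induce on $\mathrm{Aut}_{\square}$ exactly the structure maps $a^*,b_*$ of $D^k$ on $\cat{mod}(H_0)$ — follows by reading off, from the definitions of composition and of the functors $j^*$ in $\cat{K}^Q$, that horizontal composition of tracks is realized by the left and right $H_0$-actions on the bimodule $H_k$; in contrast with the topological systems $D^1_\Sigma,D^1_\Omega$ of Section~\ref{TraCat}, no partial-suspension or partial-loop twisting intervenes here. Thus the natural system associated to $(\cat{K}^Q(B),\cat{K}^Q[JB])$ is $D^k\circ\partial_0$ with $k=\dim(B)+1\in\{1,\dots,n\}$.

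It then remains to treat the action $\square_A$ for a ball pair $(B,A)$. Unwinding $F\sq_A\alpha=(\square'_A)^*(F\cup G)$ on cellular chains, where $G$ is the track over $JA$ corresponding under $\sigma$ to $\alpha$: the map $G$ adds to $F$ exactly the degree-$(\dim B)$ homology class $\alpha$, glued in along the face $A$, and comparing the two manifold orientations of $B\cup_A JA$ produces precisely the sign $\varepsilon(B,A)$. Since on $H_{\dim B}(B,\partial B)\cong R$ the contribution so obtained depends only on the orientation $\ori(B)$ and on $\alpha$, not on the face $A$, the twisted action $F\sq_{\ori(B)}\alpha=F\sq_A(\varepsilon(B,A)\alpha)$ is well defined; the identity $F\sq_{\overline{\ori}(B)}\alpha=F\sq_{\ori(B)}(-\alpha)$ is immediate from the sign, and additivity of these fundamental-class contributions over a regular union $\overline B=B_1\cup\cdots\cup B_r$ is the abelian union property — equivalently, the well-definedness of the obstruction $\ob(-)$ of Definition~\ref{TrivBdDef}. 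Together with the preceding paragraphs this would show that $\cat{K}^Q$ is an abelian $n$-th order track category with $\cat{K}^Q[\pt]=\cat{mod}(H^*_0 Q)$ and natural systems $D^k$, $1\le k\le n$; since $(\cat{K}^Q)^{n-1}=\cat{K}^{Q(n-1)}$ the systems $D^1,\dots,D^{n-1}$ could alternatively be obtained by induction on $n$, but the argument above treats all $k$ at once.

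The step I expect to be most delicate is the last one: showing that the orientation signs $\varepsilon(B,A)$ cohere, so that $\square_{\ori(B)}$ is genuinely independent of the face $A$ and additive over regular unions of balls — which amounts to checking that the canonical $\Delta$-structures on the balls of $\mathbb{B}$, together with the relative-cylinder and gluing operations on their cellular chain complexes, assemble the degree-$k$ homology of $Q$ compatibly with all the isomorphisms $\sigma$. One must also verify, uniformly over all balls $B$ — including non-elementary ones, for which $C_*(JB,\partial(JB))$ has cells below the top degree — that the chain-level identification of $\mathrm{Aut}_{\square}(f)$ with $D^k_{\partial_0 f}$ does hold, and it is here that the $n$-truncation of $Q$ is used. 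Everything else — the computation of $\cat{K}^Q[\pt]$ and the naturality of $\sigma$ — is a direct if lengthy unwinding of the definitions of Sections~\ref{HigOrdTra} and~\ref{TruChaAlg}, paralleling the case $n=1$ established in \cite{BJSe}.
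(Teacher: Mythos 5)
The paper states this theorem without any proof, so there is no argument of the author's to compare yours against; I can only assess the proposal on its own terms. Your identification of $\cat{K}^Q[\pt]$ with $\cat{mod}(H_0^*Q)$ is correct, and for \emph{elementary} balls the computation of $\mathrm{Aut}_\square(f)$ via $C_*(JB,\partial JB)\simeq R[k]$ goes through as you describe. But the step you yourself flag as delicate — the uniformity of the isomorphism $\sigma\colon\mathrm{Aut}_\square(f)\cong\mathrm{Hom}_R(L,L'\otimes H_k)$ over all balls of a given dimension — is a genuine gap, and in fact the quasi-isomorphism argument you invoke cannot close it at the top dimension.

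Concretely, take $k=n$, so $\dim(B)=n-1$ and $\cat{K}^Q[JB]=\cat{K}^Q(JB)$ with no further homotopy relation imposed. The set of tracks $f\Rightarrow f$ is then a torsor under the group of \emph{cycles} $Z^0\mathrm{Hom}_Q\bigl(C_*(JB,\partial JB)\otimes L\otimes Q, L'\otimes Q\bigr)$, not under $H^0$; and $Z^0$ is not quasi-isomorphism invariant. For $B=[0,2]$ with $n=2$, the relative complex $C_*(J[0,2],\partial J[0,2])$ is $R^2\xrightarrow{(a,b)\mapsto b-a}R$ in degrees $2,1$, and a direct computation (using $Q_3=0$) shows that the group of chain maps into $L'\otimes Q$ sits in an extension with kernel $\mathrm{Hom}_R(L,L'\otimes Z_2)^{2}$ and cokernel $\mathrm{Hom}_R(L,L'\otimes dQ_2)$ — strictly larger than $\mathrm{Hom}_R(L,L'\otimes H_2)$ unless $Q_2=0$. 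So the associated ``natural system'' at $B=[0,2]$ does not coincide with $D^2\circ\partial_0$, contradicting the factorization you need for the abelian property.

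The source of the problem is upstream of your proof and lies in the paper's setup: the unproved Lemma in Section~\ref{TruChaAlg} asserting that $\simeq$ on $\cat{K}^Q(B)$ is the trivial relation when $\dim(B)=n$ also fails for non-elementary $B$. For $n=1$, $B=[0,2]$, one can write an $H\in\cat{K}^Q(J[0,2])$ with $H_2=0$ (forced by $Q_2=0$) whose two ends $\partial^-H$, $\partial^+H$ differ by any $\psi\in\mathrm{Hom}_R(L,L'\otimes Q_1)$ on the interior cells $J(\{1\})$, $[0,1]$, $[1,2]$, $\{1\}$. Hence, with the definitions as literally written, $\cat{K}^Q$ does not satisfy the axioms in the dimension-$n$ layer, and the natural system cannot depend only on $\dim(B)$. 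To repair this, one would have to take $\cat{K}^Q(B)$ for $\dim(B)=n$ to be the quotient by chain homotopy rel boundary (so that $\cat{K}^Q[JB]$ really is an $H^0$ and not a $Z^0$), or restrict to a generating class of elementary balls and show that gluing recovers the rest; your proposal neither makes this adjustment nor gives an argument that the extra cells below top degree carry no information. Until that is resolved, the chain-level identification of $\mathrm{Aut}_\square(f)$ with $D^k_{\partial_0 f}$ — and hence the abelianness claim — is not established.
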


\begin{remark}
	Higher order Toda brackets in the abelian $n$-th order track category $\cat{K}^Q$ coincide with \emph{higher order
	matrix Massey products} in the differential algebra $Q$.
\end{remark}

Let $\calM^N$ be the set of finitely generated free graded $R$-modules concentrated in degree $<N$. Then $\cat{K}^Q$
defines the $n$-th order track category $( \cat{K}^Q(\calM^N) )^{\op}$ with objects in $\calM^N$ which is
formally dual to $\cat{K}^Q(\calM^N)$.

\begin{conj}
	There exists a bigraded differential algebra $Q$ over $R=\mathbb{Z}/p^2$ such that for $n\geq 0$ the truncation $Q(n)$
	of $Q$ yields an $n$-th order track category $\left( \cat{K}^{Q(n)}(\calM^N) \right)^{\op}$ which is weakly equivalent
	to the stable track category $\cat{C}^n\{\calZ_N\}$ of higher cohomology operations. We call $Q$ the ``\emph{algebra of
	higher cohomology operations}''.
\end{conj}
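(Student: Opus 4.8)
\emph{A plan for the conjecture.} The strategy is to construct $Q$ by induction on the truncation degree $n$, matching the tower $\bigl(\cat{C}^n\{\calZ_N\}\bigr)_{n\ge 0}$ of abelian $n$-th order track categories against the tower $\bigl(\cat{K}^{Q(n)}\bigr)_{n\ge 0}$ by means of the classification of linear track extensions. For $n=0$ one would need $\cat{K}^{Q(0)}[\pt]=\cat{mod}(H^*_0Q)$ to coincide with the stable homotopy category $\cat{C}^0\{\calZ_N\}[\pt]$, which in the stable range is the category of finitely generated free modules over the mod $p$ Steenrod algebra; this forces $H^*_0Q\cong\calA$. For $n=1$ one takes $Q(1)$ to be a chain-algebra reformulation of the secondary Steenrod algebra over $R=\mathbb{Z}/p^2$, and the equivalence $\bigl(\cat{K}^{Q(1)}(\calM^N)\bigr)^{\op}\simeq\cat{C}^1\{\calZ_N\}$ is essentially the content of the theory of secondary cohomology operations; this is the base of the induction, and it also explains the ground ring, since the mod $p$ Steenrod relations do not lift to relations over $\mathbb{Z}/p^2$ on the nose and the resulting defect is exactly the secondary structure.

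For the inductive step, assume $Q(n-1)$ has been constructed together with a weak equivalence $\bigl(\cat{K}^{Q(n-1)}(\calM^N)\bigr)^{\op}\simeq\cat{C}^{n-1}\{\calZ_N\}$. By the remark on linear track extensions in section \ref{HigOrdTra}, both $\cat{C}^n\{\calZ_N\}$ and every abelian $n$-th order track category whose $(n-1)$-truncation is $\cat{K}^{Q(n-1)}$ are classified by elements of $H^{n+2}(\cat{C}^{n-1}\{\calZ_N\},D^n)$. The first task is to pin down $D^n$: on the topological side the top natural system of $\cat{C}^n\{\calZ_N\}$ is the system $D^n_{\Omega}$ of section \ref{TraCat}, which, since $\Omega^n$ applied to a product of Eilenberg--MacLane spaces is again such a product in the stable range, is computed from $\calA$ with a degree shift by $n$ up to the bimodule twist coming from $L^n\nabla$; on the algebraic side the natural system of $\cat{K}^Q$ is $D^k(L,L')=\mathrm{Hom}_{H_0}(L\otimes H_0,L'\otimes H_0\otimes_{H_0}H_k)$ with $H_k=H^*_k(Q)$. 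Comparing these forces $H^*_n(Q)$, as an $\calA$-bimodule, to be an $n$-fold degree shift of $\calA$, which constrains $Q$ in upper degree $n+1$ and the differential $d\colon Q^*_{n+1}\to Q^*_n$; one then chooses $Q$ in these bidegrees so as to realise this homology.

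The main obstacle is to arrange that the two classes in $H^{n+2}(\cat{C}^{n-1}\{\calZ_N\},D^n)$ — the one realised by the topological $\cat{C}^n\{\calZ_N\}$ and the one realised by $\cat{K}^{Q(n)}$ for the chosen $Q$ — agree. I see two routes. The first is a uniqueness argument: prove that the subset of $H^{n+2}(\cat{C}^{n-1}\{\calZ_N\},D^n)$ realisable by $\mathbb{Z}/p^2$-algebra models is small — ideally a single point, or a coset on which the freedom in the choice of $Q$ in upper degree $n+1$ acts transitively — so that $Q$ can be adjusted to hit the topological class. The second, and the one more likely to succeed, is to give $Q$ a direct cochain-level description: build $Q$ from a $\mathbb{Z}/p^2$ cochain model of mod $p$ cohomology of products of Eilenberg--MacLane spaces, organised so that the second grading records the homological degree of a fixed free resolution of $\calA$, and then identify $\cat{K}^{Q(n)}$ with the $n$-th order track category obtained from an explicit cochain model of the mapping spaces $\cat{C}^n\{\calZ_N\}(B)$ via the chain-algebra construction of section \ref{TruChaAlg}. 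Equivalently, one would lift the desired weak equivalence from the level of truncated track categories to the level of $\mathbb{B}$-deformation categories by producing a chain-algebra-valued analogue of the topological $\mathbb{B}$-deformation category $\cat{C}\{\calZ_N\}$. The difficulty of this route is precisely that it requires controlling the homotopy-coherent $\calA$-action on cohomology to all orders at once, which is why the statement is posed as a conjecture rather than a theorem.

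A realistic first milestone is the case $n=2$: compute $H^4(\cat{C}^1\{\calZ_N\},D^2)$, exhibit a $\mathbb{Z}/p^2$-algebra $Q(2)$ whose associated tertiary track category realises the topological class, and then check that the tertiary Adams differential $d_3$ computed from $Q(2)$ via the proposition in section \ref{HigOrdCoh} matches the topological $d_3$. Already this requires a complete description of the secondary Steenrod algebra together with its first ``relations among relations'', so it would serve both as a test of the conjecture and as a test of the higher order track category machinery developed above.
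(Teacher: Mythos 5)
The statement you are asked to prove is labeled a conjecture, and the paper offers no proof of it; the only evidence the paper cites is the subsequent theorem asserting the cases $n=0$ (where $Q(0)=\calA$) and $n=1$ (where $Q(1)=\calB$ is the pair algebra of secondary operations, with the weak equivalence established in \cite[5.5.6]{BSe}). There is therefore no argument in the paper to compare yours against; what you have written is, appropriately, a research plan rather than a proof, and it is broadly consistent with the framework the paper itself sets up (truncation tower, linear track extensions, the natural systems $D^k$, the chain-algebra construction $\cat{K}^Q$).

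Within that plan, the step I would flag as more than a routine difficulty is your reliance on the classification group $H^{n+2}(\cat{K}^{n-1},D^n)$ to match the topological class with an algebraic one. The paper introduces this only in a remark, defining it as the set of equivalence classes of linear track extensions of the $(n-1)$-st order track category $\cat{K}^{n-1}$ by $D^n$, and it explicitly says this coincides with the cohomology of \cite{BW} only for $n=1$. For $n\geq 2$ it is a cohomology of a higher structure, not of an ordinary category; the paper does not develop it, and in particular it is not established that extensions form a group in which one can ``compare classes'' or that every cohomology class is realisable. So your first route (a uniqueness or transitivity argument inside $H^{n+2}$) is speculative twice over: both the classifying object and the realisability statement would need to be built before the argument could even begin. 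Your second route—an explicit cochain-level construction of $Q$ modelling $\cat{C}\{\calZ_N\}$ as a chain-algebra-valued analogue of the $\mathbb{B}$-deformation category, truncated afterwards—is the one the paper's own evidence for $n=1$ actually follows (\cite{BSe} constructs $\calB$ directly, not by obstruction theory), and you correctly identify it as the more promising but harder path; the difficulty you name, controlling the homotopy-coherent $\calA$-action to all orders, is exactly why the statement is left as a conjecture. One small correction: the target of the comparison is the full subcategory $\cat{C}^n\{\calZ_N\}$ on $\calZ_N$, not an under category $\cat{C}^n\{W\to\calZ_N\}$; the paper's proposition identifying the natural system as $D^n_{\Omega}$ is stated for under categories, so if you want to use it you should either note that $\calZ_N$ consists of $H$-groups so the same formula applies to the full subcategory, or restate the proposition for this case.
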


\begin{thm}
	The conjecture is true for $n=0$ and $n=1$. For $n=0$ we get the Steenrod algebra $Q(0)=\calA$. For $n=1$ we get the
	pair algebra $Q(1)=\calB$ of secondary cohomology operations. The weak equivalence of track categories is established
	for $n=1$ in \cite[5.5.6]{BSe}.
\end{thm}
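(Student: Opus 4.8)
The plan is to establish the two weak equivalences separately. By the truncation identity $(\cat{K}^{Q})^{n-1}=\cat{K}^{Q(n-1)}$ proved above it suffices to determine the $0$- and $1$-truncations of the sought algebra $Q$ and to note that $\calB(0)=\calA$, so one may take $Q$ to be (any bigraded differential algebra over $\mathbb{Z}/p^2$ whose $1$-truncation is) the pair algebra $\calB$. I will use two facts recorded earlier: a first-order track category is the same thing as a track category (Example in section \ref{HigOrdTra}); and a $0$-th order track category is merely a category with a zero object, since $\mathbb{B}(0)=\{\pt\}$, so a weak equivalence for $n=0$ is just an isomorphism on $\cat{K}[\pt]=\cat{K}(\pt)$, there being no groups $\mathrm{Aut}_{\square}$ to compare.

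\emph{The case $n=0$.} Take $Q(0)=\calA$, the mod-$p$ Steenrod algebra, realized as the $0$-truncation (so $\calA=Q^\bullet_0/dQ^\bullet_1=H^*_0Q$). By the structure theorem for $\cat{K}^Q$ applied with $n=0$ one has $\cat{K}^{\calA}[\pt]=\cat{mod}(H^*_0Q)=\cat{mod}(\calA)$, the category of finitely generated free graded $\calA$-modules. On the topological side $\cat{C}^0\{\calZ_N\}[\pt]=\cat{C}\{\calZ_N\}[\pt]$ is the full subcategory on the stable-range products $Z^{n_1}\times\cdots\times Z^{n_r}$, $N\le n_i<2N$, of the theory of Eilenberg--MacLane spaces of \cite[1.1.5]{BSe}. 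The classical computation of $H^*(K(\mathbb{F},n);\mathbb{F})$ in the stable range gives natural isomorphisms
\begin{equation*}
	[\,Z^{n_1}\times\cdots\times Z^{n_r},\ Z^{m}\,]\ \cong\ \bigoplus_{i}\calA^{\,m-n_i}\ =\ \mathrm{Hom}_{\calA}\bigl(\Sigma^{m}\calA,\ {\textstyle\bigoplus_i}\,\Sigma^{n_i}\calA\bigr),
\end{equation*}
hence
\begin{equation*}
	\bigl[\,{\textstyle\prod_i}\,Z^{n_i},\ {\textstyle\prod_j}\,Z^{m_j}\,\bigr]\ \cong\ \mathrm{Hom}_{\calA}\bigl(\,{\textstyle\bigoplus_j}\,\Sigma^{m_j}\calA,\ {\textstyle\bigoplus_i}\,\Sigma^{n_i}\calA\,\bigr).
\end{equation*}
Since $H^*$ reverses arrows, composition on the left corresponds to composition of $\calA$-module maps on the right in the opposite order; under $L\mapsto\textstyle\prod_{b}Z^{N+|b|}$ (product over a basis of $L$) this yields an isomorphism of categories $\cat{C}^0\{\calZ_N\}[\pt]\cong\bigl(\cat{K}^{\calA}(\calM^N)\bigr)^{\op}[\pt]$, which is the required weak equivalence since there is no higher structure for $n=0$.

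\emph{The case $n=1$.} Let $\calB=Q(1)$ be the pair algebra of secondary cohomology operations of \cite{BSe}, a $1$-truncated chain algebra over $R=\mathbb{Z}/p^2$ with $H^*_0\calB=\calA$ and $H^*_1\calB\cong\Sigma\calA$ as an $\calA$-bimodule; then $(\cat{K}^{\calB})^{0}=\cat{K}^{\calB(0)}=\cat{K}^{\calA}$, compatibly with the previous case. The construction of section \ref{TruChaAlg} applied to $\calB$ produces a first-order track category $\cat{K}^{\calB}$: its objects are the finitely generated free graded $\mathbb{Z}/p^2$-modules, its $1$-cells are the $\calB$-module maps $C_*(I^{\bullet})\otimes L\otimes\calB\to L'\otimes\calB$, its $2$-cells are recorded by the chain cylinder $C_*(I)$, and the induced $\varepsilon,\op,\square$ together with the coalgebra maps $\overline{\Delta}_*$ realize, respectively, the pasting of tracks and the composition and $\otimes$-product. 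I would then identify this track category with the track category of secondary cohomology operations built directly from $\calB$ in \cite{BSe}, and invoke \cite[5.5.6]{BSe}: it asserts exactly the weak equivalence $\cat{C}^1\{\calZ_N\}\simeq\bigl(\cat{K}^{\calB}(\calM^N)\bigr)^{\op}$ between the stable track theory of Eilenberg--MacLane spaces and the algebraic model, which on homotopy categories is the isomorphism of the $n=0$ case and on the automorphism groups $\mathrm{Aut}_{\square}(f)$ follows because both sides carry the same natural system --- $D^1=D^1_{\Omega}$ in the stable range topologically, and $D^1(L,L')=\mathrm{Hom}_{\calA}\bigl(L\otimes\calA,\ L'\otimes\calA\otimes_{\calA}H^*_1\calB\bigr)$ algebraically --- the match amounting to $H^*_1\calB\cong\Sigma\calA$ against $D^1_{\Omega}(f)=[X,\Omega Z^{m}]=[X,Z^{m-1}]$ in the stable range.

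The hard part is precisely the identification just invoked: that the abstract first-order track category $\cat{K}^{\calB}$ produced by the ``chain complexes of balls'' machine of section \ref{TruChaAlg} really coincides with the concrete track category of secondary cohomology operations of \cite{BSe}. This requires checking that the combinatorial chain models $C_*(B)$ with their diagonal approximations $\overline{\Delta}_*$, together with the cylinder $C_*(I)$, reproduce the track-category structure (pasting of tracks, horizontal composition, $\otimes$-product and the orientation conventions of the abelian structure) used in \cite{BSe}, and in particular that $H^*_1\calB\cong\Sigma\calA$ carries the correct $\calA$-bimodule structure, so that $D^1$ is identified with $D^1_{\Omega}$ on the stable range. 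Granting this matching, \cite[5.5.6]{BSe} applies verbatim. Finally the coefficient-ring discrepancy $\mathbb{Z}/p^2$ versus $\mathbb{F}_p$ is harmless, since $\calA$ and $\Sigma\calA$ are $p$-torsion, so the relevant $\mathrm{Hom}$-groups and automorphism groups are unaffected.
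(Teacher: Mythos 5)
The paper gives no proof beyond the inline citation of \cite[5.5.6]{BSe}, and your elaboration faithfully tracks that: the $n=0$ case via the classical stable-range computation of $H^*(K(\mathbb{F},n);\mathbb{F})$ (identifying $\cat{C}^0\{\calZ_N\}[\pt]$ with $(\cat{mod}(\calA))^{\op}$), and the $n=1$ case by identifying $\cat{K}^{\calB}$ from section~\ref{TruChaAlg} with the track category of secondary cohomology operations in \cite{BSe} and then invoking \cite[5.5.6]{BSe}. Your explicit flagging of that identification step as the unchecked crux is appropriate, since the paper tacitly assumes it and supplies no details beyond the citation.
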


If the algebra $Q$ is computed one has by the conjecture and section \ref{HigOrdCoh} a direct way to compute the
differentials in the Adams spectral sequence which then allows the computation of stable homotopy groups of spheres.

\end{document}